\documentclass[10pt]{amsart}
\usepackage[utf8]{inputenc}
\usepackage[english]{babel}
\usepackage{tikz-cd}
\usepackage{graphicx}
\usepackage{amsthm}
\usepackage{amstext}
\usepackage{amsmath,amscd,amsfonts}
\usepackage{amssymb}
\usepackage{mathrsfs}
\usepackage{mathtools}
\usepackage[all]{xy}
\usepackage{stmaryrd}
\usepackage{color,comment}
\usepackage{amsmath, amsthm, amsfonts, enumerate}
\usepackage{amsmath}
\usepackage{graphicx}
\usepackage[colorlinks=true, allcolors=blue]{hyperref}

\newtheorem{theorem}{Theorem}[section]
\newtheorem{lemma}[theorem]{Lemma}
\newtheorem{corollary}[theorem]{Corollary}
\newtheorem{proposition}[theorem]{Proposition}
\newtheorem{definition}[theorem]{Definition}
\theoremstyle{remark}
\newtheorem{remark}[theorem]{\bf Remark}
\theoremstyle{definition}

\newtheorem{conjecture}[theorem]{Conjecture}
\numberwithin{equation}{section}

\DeclareMathOperator{\HS}{HS}
\newcommand{\kk}{\Bbbk}

\newcommand{\compBoundsixnine}{7}

\title[On the Hilbert series of ideals generated by general linear forms]{On the Hilbert series of ideals generated by powers of general linear forms}
\subjclass[2010]{Primary: 13E10, 13D40, Secondary: 14C20,  13D02, 13C13}

\author{Mats Boij}

\address{Mats Boij, Department of Mathematics, KTH - Royal Institute of Technology,  SE-100 44 Stockholm, Sweden}
\email{boij@kth.se}

\author{Samuel Lundqvist}
\address{Samuel Lundqvist, Department of Mathematics, Stockholm University, SE-106 91 Stockholm, 
Sweden}
\email{samuel@math.su.se}

\begin{document}
\begin{abstract}
We determine the Hilbert series of ideals generated by $d$'th powers of $n+2$ general linear forms in $n$ variables, to give upper bounds on the degree of the Hilbert series of ideals generated by $d$'th powers of $n+k$ general linear forms for $k>2$. This allows us to show that the Iarrobino-Fr\"oberg Conjecture fails for all $n$ large enough. We also determine the degree of the Hilbert series for the ideal generated by $d$'th powers of $n+3$ general linear forms, for some values of $n$, and give counterexamples to a conjecture on the failure of the Weak Lefschetz Property for ideals generated by sufficiently large powers of general linear forms. Moreover, we determine the Hilbert series of the ideal generated by two generic quadratic forms in the exterior algebra on an even number of generators.    
\end{abstract}
\maketitle

\section{Introduction}

The \emph{Interpolation Problem} in algebraic geometry, which involves determining the dimension of the linear system of forms vanishing to specified orders at general points in the projective space, has lead to 
several conjectures that are still open in general. Nagata's Conjecture~\cite{NA59} from 1959 predicts a lower bound for the degree of such a form when there are at least $10$ points in the projective plane. Work by Segre, Harbourne, Gimigliano and Hirschowitz has successively made this more precise and their predictions about the dimension of such linear systems for points in the plane are known as the \emph{SHGH Conjecture}. Ciliberto~\cite{CI01} provides a survey over these developments. Using Macaulay's inverse systems, Emsalem and Iarrobino~\cite{EI95} showed how to relate the interpolation problem to the problem of determining the Hilbert function of ideals generated by powers of general linear forms. In 1985 Fr\"oberg~\cite{FR85} made a conjecture about the Hilbert function of ideals generated by general forms and in 1997 Iarrobino~\cite{I97} conjectured that ideals generated by powers of sufficiently many general forms should have the same Hilbert function. This has been known as the \emph{Iarrobino-Fr\"oberg Conjecture} and in 2007 Chandler \cite{C07} provided counterexamples to this conjecture $m=n+5$ general linear forms in $n=5,6,7$ variables. 
 
By a recent result by Laface, Postinghel, and
Santana S\'anchez \cite{LPS23} on linear systems, we are able to determine the Hilbert series of ideals generated by $d$'th powers of $n+2$ general linear forms. By carefully analyzing the first coefficient for which the series differs from the series for generic forms, we can prove the existence of non-trivial syzygies of $d$'th powers of $n+2$ general linear forms. We show that for $n$ large enough, these syzygies exists also in ideal generated by $d$'th powers of  $n+k$ general forms, for $k>2$, from which it follows that the Hilbert series of an ideal generated by $d \geq 2$'th powers of linear forms of degree $k\geq 2$ differs from the series conjectured for $k$ generic forms of degree for all $n$ large enough. The proof for this statement uses the central limit theorem from probability theory.

We also use a connection between squares of $n+2$ linear forms and two generic quadratic forms in the exterior algebra \cite{CLN19} to resolve a combinatorial conjecture on the Hilbert series of the exterior algebra on an even number of generators modulo two generic quadratic forms.

A well known technique used for studying  linear systems are Cremona transformations. These techniques have been used also for studying ideals generated by powers of general linear forms with respect to the Lefschetz properties by means of the Emsalem-Iarrobino correspondence. 
We state and prove an algebraic version of the Cremona transformation and then use it to draw conclusions on the Hilbert series of $d$'th powers of $n+3$ general forms for $n=3,4,5,6$, and for both $n+4$ and $n+5$ general forms for $n=3,4$. In connection to these results we are able to give counterexamples to a conjecture by Harbourne, Schenck, and Seceleanu \cite{HSS11} on the weak Lefschetz property.

\section{Preliminaries} \label{sec:prel}

Throughout the rest of the paper, we let $\kk$ be a field of characteristic zero and we let $S=\kk[x_1,\ldots,x_n]$ be the standard graded polynomial ring, and 
\[R_{n,m,d} = S/(\ell_1^d,\ldots,\ell_m^d),\] where each $\ell_i$ is a general linear form.

We write $\HS(A,t)$ for the Hilbert series of a standard graded algebra $A$, and we
let $D_{n,m,d} = \deg (R_{n,m,d}).$

We use the convention that $\binom{n}{-i} = \binom{-n}{i} = \binom{-n}{-i}=0$ for positive integers $i$ and $n$.

\subsection{The dimension of the linear system}

Let $p_1,\ldots,p_s$ be points in $\mathbb{P}^n$ and let $m_1,\ldots, m_s$ be positive integers. 
The polynomial interpolation problem is the problem of determining the dimension of the linear system of forms of degree $d$ that vanish to the order at least $m_i$ at $p_i$, for $i=1,\ldots,s.$ For general points $p_1,\ldots,p_s$, this is denoted $\mathfrak{L}_{n,d}(m_1,\ldots,m_{s})$, and it is well known that it equals the dimension of the $d$'th graded component of the fat point ideal 
\[ \mathfrak{p}_1^{m_1} \cap 
\cdots \cap \mathfrak{p}_{s}^{m_{s}} \subset \kk[x_0,\ldots,x_{n}],\]
where $\mathfrak{p_1},\ldots, \mathfrak{p_{s}}$ are the ideals vanishing on the points $p_1,\ldots,p_{s}$, respectively.
 
The problem is open in general. The 
Alexander-Hirschowitz \cite{ah} theorem settles the case of general points and $m_1=\ldots=m_s =2$. Laface, Postinghel, and Santana S\'anchez \cite{LPS23} solve the problem when the points lie on a rational normal curve. The latter result in the special case of $n+3$ points is the following.

\begin{theorem} \cite[Theorem 2.1 restricted to the case of $n+3$ points on a rational normal curve]{LPS23} \label{thm:linearsystem}
Suppose that $m_j \geq \sum_{j=1}^{n+3} m_j - ni$ for $j \in \{1,\ldots,n+3\}.$
Then 
\begin{align*}
\dim(\mathfrak{L}_{n,i}(m_1,\ldots,m_{n+3})) =\\  
\sum_{t=0}^{\lfloor n/2 \rfloor } 
\sum_{\substack{
I \subset \{1,\ldots,n+3\} \\
0 \leq |I| \leq n-2t
}}
 (-1)^{|I|} \binom{ n+ \sum_{j \in I} m_j + t \sum_{j=1}^{n+3} m_j - t ni - ti -|I|i + i - |I|-2t}{n}.
\end{align*}
\end{theorem}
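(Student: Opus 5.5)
This statement is the special case of \cite[Theorem 2.1]{LPS23} in which exactly $n+3$ points lie on a rational normal curve. The plan is therefore to specialize the general formula, not to reprove it. The point that makes this legitimate is that any $n+3$ general points of $\mathbb{P}^n$ lie on a unique rational normal curve. Consequently $\mathfrak{L}_{n,i}(m_1,\ldots,m_{n+3})$ for general points coincides with the linear system for general points on a rational normal curve, and the hypotheses of \cite{LPS23} impose no extra restriction.

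The general theorem of \cite{LPS23} writes the dimension as an alternating sum over the linear cycles $L_I$ spanned by subsets $I$ of the points, together with the secant varieties $\sigma_t$ of the rational normal curve. Each locus contributes a binomial coefficient indexed by its multiplicity of containment $k_{I,\sigma_t}$ in the base locus. The first step is to check that the hypothesis $m_j \ge \sum_j m_j - ni$ for every $j$ is exactly the condition in \cite{LPS23} under which the formula is valid, namely the range in which only the expected base loci appear. Write $M=\sum_j m_j$. Under this hypothesis the $t$-secant contributes multiplicity $tM - tni - ti$, the cycle $L_I$ contributes $\sum_{j\in I} m_j - (|I|-1)i$, and their joins contribute the sum of the two. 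This produces the numerator
\[ n + \sum_{j\in I}m_j + tM - tni - ti - |I|i + i - |I| - 2t, \]
where the term $-|I|-2t$ comes from the dimension $|I|+2t-1$ of the join of $L_I$ with $\sigma_t$.

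The second step is to restrict the index set. The join $J(L_I,\sigma_t)$ fills $\mathbb{P}^n$ once $|I|+2t > n$, so only the range $|I| \le n-2t$ and $t \le \lfloor n/2\rfloor$ is relevant. The convention $\binom{n}{-i}=0$ from Section~\ref{sec:prel} makes the terms with negative upper index vanish automatically. Collecting the signs $(-1)^{|I|}$ then gives the displayed double sum.

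The main obstacle is bookkeeping. One has to match the notation of \cite{LPS23}, which works with $\mathbb{P}^n$, projective dimensions and $\binom{n+k}{n}$, against the affine and degree conventions used here. In particular one must make sure that the shift $-|I|-2t$ and the sign convention agree, and that the binomial convention correctly kills the terms with negative argument. I would verify the specialization on small cases, namely $t=0$ and $I=\emptyset$, which gives $\binom{n+i}{n}$, and $|I|=1$, which gives the condition of vanishing at one point.
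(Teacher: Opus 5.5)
Your proposal matches the paper: the statement is not proved there but quoted from \cite[Theorem 2.1]{LPS23} with $n+3$ points, and the remark right after it uses, as you do, that $n+3$ general points of $\mathbb{P}^n$ lie on a rational normal curve to identify the two linear systems. One harmless slip in your bookkeeping: the multiplicity of containment of $\sigma_t$ is $tM-(tn+t-1)i$ (for instance $M-ni$ for the curve itself), and joins satisfy $k_{J(A,B)}=k_A+k_B-i$ rather than being additive; however, your formula for $L_I$ gives $i$ when $I=\emptyset$, so your two shifts by $i$ cancel and the numerator you obtain is correct.
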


We note that the condition on the multiplicities in Theorem \ref{thm:linearsystem} can be resolved, but as we will not need it for our arguments, we refer the reader to \cite{LPS23} for details.

Recall that there is a rational normal curve passing through $n+3$ general points in $\mathbb{P}^n$. Thus
\[ \dim(\mathfrak{L}_{n,i}(m_1,\ldots,m_{n+3})) = \dim_{\kk} I_i, \] 
where \[ I =\mathfrak{p}_1^{m_1} \cap  
\cdots \cap \mathfrak{p}_{n+3}^{m_{n+3}} \subset 
\kk[x_0,\ldots,x_{n}], \]
and where the corresponding points $p_1,\ldots,p_{n+3}$ are general.

\subsection{The inverse system}

Consider the dual polynomial ring $S'=\kk[X_1,\dots,X_n]$, where $x_i \in S$ acts like $\partial /\partial X_i$, for $i=1,\dots,n$. The inverse system of an ideal $I \in S$, denoted by $I^{-1}$, is the submodule annihilated by $I$ under this action. We use the notation $f\circ F$ for the action of the form $f\in S$ on the form $F\in S'$. By duality,
\begin{equation} \label{eq:inverse} \dim_\kk [I^{-1}]_d =
  \dim_\kk[S/I]_d.
\end{equation}
Will abuse notation and in the rest of the paper we will denote the variables in the dual ring by $x_1,\ldots,x_n$, instead of $X_1,\ldots,X_n.$

Let $p_1,\ldots,p_s$ be points in $\mathbb{P}^{n-1}$ and let $m_1,\ldots, m_s$ be positive integers. Let 
\[ I =\mathfrak{p}_1^{m_1} \cap 
\cdots \cap \mathfrak{p}_{s}^{m_{s}} \subset 
\kk[x_1,\ldots,x_{n}]. \]
Emsalem and Iarrobino \cite{EI95} showed that
\[(I^{-1})_i = 
\begin{cases}
    S'_i & \text{ if } 0 \leq i < \max(m_1,\ldots,m_s) \\
    ( \ell_1^{i-m_1+1}, \ldots, \ell_s^{i-m_s+1} )_i & \text{ if } i \geq \max(m_1,\ldots,m_s).
\end{cases}, \]
where the coefficients of the linear form $\ell_i \in S'$ are given by the coordinates of the point $p_i$. In particular, letting $a_i$ denote the dimension of the $i$'th graded component of $R_{n,n+2,d}$, we have

\begin{equation} \label{eq:emsalemiarrobino}
a_i  = 
\begin{cases}
\binom{n-1+i}{n-1} & \text{ if } 0 \leq i < d  \\
      \dim \mathfrak{L}_{n-1,i}((i-d+1)^{n+2}) & \text{ if } i \geq d,
\end{cases}
\end{equation}
where $\mathfrak{L}_{n-1,i}(b^c)$ is short for 
$\mathfrak{L}_{n-1,i}(\underbrace{b,\ldots, b}_{c \text{ times}}).$

\subsection{The Fröberg conjecture}

The Fr\"oberg conjecture states that the Hilbert series of $S/(f_1,\ldots, f_m),$ where the $f_i$ are generic forms of degrees $d_1,\ldots,d_m$, equals
\begin{equation} \label{eq:froberg}
\left[\frac{\prod (1-t^{d_i})}{(1-t)^n} \right]
\end{equation}
where the brackets means truncate at the first non-positive term. It is straightforward to check that up to this first non-positive term, the coefficient in front of $t^j$ in \ref{eq:froberg} equals
\begin{equation}
\sum_{k=0}^m (-1)^k \sum_{1\le
    i_1<i_2<\dots<i_k} \binom{j-d_{i_1}-\cdots-d_{i_k}+n-1}{n-1},\end{equation}
    which in the equigenerated case $d=d_1=\ldots=d_m$
    simplifies to
\begin{equation}
\sum_{k \geq 0} (-1)^k \binom{m}{k} \binom{j-kd+n-1}{n-1}.
\end{equation}

The conjecture is settled in the cases $n \leq 3$ \cite{FR85,AN86}, and in the case $m=n+1$ \cite{ST80}. In the equigenerated case $d=d_1=\ldots=d_m$ it is also known that the coefficients in the series (\ref{eq:froberg}) are correct up to degree $d+1$ for any $d$ \cite{HL87}, and up to degree $d+2$ for any $d>2$, and in general, correct up to degree $2d+1$ for $d$ large enough \cite{BDL26}. 

\subsection{The Iarrobino-Fr\"oberg conjecture, the SHGH conjecture, and Nagata's conjecture}

Iarrobino \cite{I97} made the following conjecture, known in the literature  as the Iarrobino-Fr\"oberg Conjecture.

\begin{conjecture}
The Hilbert series of $R_{n,m,d}$ agree with the series predicted by Fr\"oberg for $m$ general forms of degree $d$, except for the cases $m= n + 2, m= n + 3$, and $(n,m) \in \{(3,7),(3,8), (4,9), (5,14)\}$.
\end{conjecture}
As mentioned in the introduction, Chandler \cite{C07}, gave counterexamples to the Iarrobino-Fr\"oberg conjecture for $(n,m) \in \{
(5,10), (6,11), (7,12) \}$ for $d$ large enough. One of the aims of the present paper is to give a deeper understanding of the failure of this conjecture. 

The SHGH conjecture is a conjecture on the Hilbert series of general fat point ideals
\[ \mathfrak{p}_1^{d_1} \cap 
\cdots \cap \mathfrak{p}_{m}^{d_{m}} \subset \kk[x_0,x_1,x_{2}]\] for  $m \geq 10$. For so called quasi uniform degrees $d_1 \geq  \cdots = d_9 \geq d_{10} \geq \cdots \geq d_{m},$ Harbourne, Holay, and Fitchett \cite{HHF03} showed that the conjecture takes the form 
\[ \dim_{\kk} (\mathfrak{p}_1^{d_1} \cap 
\cdots \cap \mathfrak{p}_{m}^{d_{m}})_i = \max \left(0,\binom{i+2}{2} - \sum_i \binom{d_i+1}{2} \right). \]

By the Emsalem-Iarrobino correspondence, this means that  \[ \dim_{\kk} (\kk[x_1,x_2,x_3]/(\ell_1^{d_1}, \ldots, \ell_m^{d_m}))_i = \max \left(0,\binom{i+2}{2} - \sum_i \binom{i-d_i}{2} \right). \] 

In Proposition \ref{prop:39}, we determine the degree of $R_{3,9,d}$, and in particular, we show that its degree is below the Koszul degree. This means that the Fr\"oberg's conjecture for the dimension in degree $i$ of the quotient by $m \geq 9$ forms of degrees $d_1=\cdots=d_9 \leq d_{10} \leq \ldots \leq d_m$ becomes
\[
\binom{i+2}{2} - \sum_i \binom{i-d_i}{2} 
\]
as long as it is positive, and then equal to zero. Since the expression is quadratic, it follows that  Fr\"oberg's conjecture becomes
\[
\max \left(0,\binom{i+2}{2} - \sum_i \binom{i-d_i}{2} \right) 
\]
in this case. We conclude that SHGH conjecture for quasi uniform degrees is equivalent to the statement that the corresponding algebra of general linear forms has the series predicted by Fr\"oberg for generic forms of the same degrees. In particular, in the uniform case $d_1= \cdots = d_m = d$, it coincides with the Iarrobino conjecture. Ciliberto and Miranda \cite{CM06} proved that for any $d$, $m$ and $k$, the linear system of plane curves f degree $d$ having $k^2$ general multiple points of multiplicity $m$ has the expected dimension, which proves the SHGH conjecture in the equigenerated case for squares. In our language, this means that $R_{3,k^2,d}$ has the Hilbert series predicted by Fröberg for all $k\ge 2$. 

In 1959, Nagata \cite{NA59} conjectured that for a curve of degree $r$ in 
$\mathbb{P}^2$ to pass through $m \geq 10$ general points $p_1,\ldots,p_m$ of multiplicities $d_1,\ldots,d_m$, the inequality  \[r \sqrt{m} \geq \sum d_i \] 
should hold. Nagata showed the inequlity in case when $m$ is a square.

For us this means that the degree of $R_{3,m,d}$ should be at most 
\[ \left \lceil \frac{(d-1) \sqrt{m}}{\sqrt{m}-1 } \right \rceil - 1. \] 
Ciliberto \cite{CI01} showed that the SHGH conjecture implies Nagata's conjecture.

\subsection{The Lefschetz properties}
An artinian graded algebra $A$ has the weak Lefschetz property (WLP) if $\HS(A/(\ell),t) = [(1-t)\HS(A,t)]$ for a general linear form $\ell$, and it has the strong Lefschetz property (SLP) if $\HS(A/(\ell^i),t) = [(1-t)^i\HS(A,t)]$ for all $i$. It is well known that monomial complete intersections have the SLP, and this result goes back to Stanley \cite{ST80}. Recall that monomial complete intersections having the SLP implies the Fr\"oberg conjecture for the case $m = n + 1$. 

It is also known that $R_{n,n+1,d}$ does not have the WLP \cite{BL23}, except for the cases  $n \leq 3$, $d=1$, or $(n,d) \in \{(4,2),(5,2),(5,3),(7,2)\}.$ The method of proof used in \cite{BL23} to derive this result was to show that the degree of the Hilbert series of $R_{n+1,n+2,d}/(\ell) = R_{n,n+2,d}$ is greater then the degree of the Hilbert series predicted by Fr\"oberg for $n+2$ generic forms of degree $d$. 

The present paper is a continuation of the paper \cite{BL23}, and although not as focused on the WLP as the previous one,  in Theorem \ref{thm:diffdeg} we will strengthen the main result of \cite{BL23} by also providing the minimal degree for which the multiplication map fails to have maximal rank. 

Harbourne, Schenck and Seceleanu \cite{HSS11} have conjectured that $R_{n,m,d}$ fails the WLP when $m \geq n+1 \geq 5$ for $d >>0$. We show that the algebras $R_{4,k^2,d}$ and $R_{5,8,d}$ have the WLP for all $k \geq 2$ and all $d$, thus providing counterexamples to the conjecture.

\subsection{The Cox-Nagata ring} 
For linear forms $\ell_1,\ldots, \ell_m$ and positive integers $c_1,\ldots,c_m,d$, consider 
\[V_{c_1,\ldots,c_m,d}  = \{ F \in S_d : \ell_i^{c_i+1} \circ F= 0\} \]  
and define 
\[C = \bigoplus_{c_1,\ldots,c_m,d} V_{c_1,\ldots,c_m,d}.\]
Now $C$ is a graded ring since 
\[ \ell_i^{c_i + c'_i + 1} \circ FG = 0 \]  
if $F \in V_{c_1,\ldots,c_m,d}$ and $G \in V_{c'_1,\ldots,c'_m,d'}$,
showing that \[V_{c_1,\ldots,c_m,d} V_{c'_1,\ldots,c'_m,d'} \subseteq 
V_{c_1+c'_1,\ldots,c_m+c'_m,d+d'}.\]

This ring structure is what governs the failure of the WLP for 
$R_{n,n+1,d}$ when $d\geq 3$. 

Sturmfels and Xu \cite{SX10} calls $C$ the \emph{Cox-Nagata ring}. Nagata showed \cite{NA59} that for $n=3, m=16$, the ring $C$ is not finitely generated, which gave a negative answer to Hilbert's 14'th problem. It is now known that $C$ is finitely generated if and only if 
\[ \frac{1}{2} + \frac{1}{n} + \frac{1}{m-n} > 1, \]
see \cite{SX10} who attributes the result to \cite{CT06,Mu05}. For a description of the generators in some of the finitely generated cases, see \cite{BM26} and the references therein.

\section{The Hilbert series of $n+2$ $d$'th powers of general linear forms}

We begin by applying Theorem \ref{thm:linearsystem} to determine the Hilbert series of $R_{n,n+2,d}$. 

\begin{theorem} \label{thm:n+2equi}
The Hilbert series of $R_{n,n+2,d}$ is given by the polynomial
\[ 
\sum_{i=0}^{D_{n,n+2,d}} a_i t^i,
\]
where \begin{equation} \label{eq:ai}
    a_i = \sum_{r=0}^{\lfloor (n-1)/2 \rfloor } \sum_{k=0}^{n-1-2r} (-1)^{k} 
\binom{n+2}{k}  \binom{- n r d + n r + 2 i r - 2 r d - k d + n + i - 1}{n-1},
\end{equation}    
and where 
\[D_{n,n+2,d} = 
\begin{cases}
(n+1)(d-1)/2 & \text{if $n$ is odd}\\
\lfloor \frac{n(n+2)(d-1)}{2(n+1)} \rfloor & \text{if $n$ is even}\\
\end{cases} \]
\end{theorem}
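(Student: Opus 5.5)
We plan to combine the Emsalem--Iarrobino identity \eqref{eq:emsalemiarrobino} with Theorem~\ref{thm:linearsystem}, applied in $\mathbb{P}^{n-1}$ with $n+2=(n-1)+3$ points, which lie on a rational normal curve since they are general. For $i<d$ there is nothing to prove beyond checking that \eqref{eq:ai} gives $\binom{n-1+i}{n-1}$. When $i<d$, every term with $r\ge 1$ or $k\ge 1$ has a binomial whose top entry is negative, or smaller than $n-1$ while nonnegative, so it vanishes by the stated convention. Only $r=k=0$ survives.

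For $i\ge d$ we take $m_j=b:=i-d+1$ for all $n+2$ points. First we verify the hypothesis of Theorem~\ref{thm:linearsystem} in dimension $n-1$, namely $b\ge (n+2)b-(n-1)i$. This is the inequality $(n+1)(i-d+1)\le (n-1)i$, i.e.\ $2i\le (n+1)(d-1)$. So the formula applies exactly for $d\le i\le (n+1)(d-1)/2$.

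In that range we substitute $n\mapsto n-1$, $m_j=b$ and $|I|=k$ into the formula of Theorem~\ref{thm:linearsystem}, and group the subsets $I$ by size, which produces the factor $\binom{n+2}{k}$. The top entry of the binomial becomes
\[
n-1+kb+t(n+2)b-t(n-1)i-ti-ki+i-k-2t ,
\]
and setting $b=i-d+1$ this simplifies to $-ntd+nt+2it-2td-kd+n+i-1$, matching \eqref{eq:ai} with $r=t$. The range of $r$ is $0\le r\le\lfloor (n-1)/2\rfloor$, and $k$ runs from $0$ to $n-1-2r$. This step is routine algebra.

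It remains to determine where $a_i$ becomes zero; once $a_i=0$ for some $i$, all higher terms vanish because $R_{n,n+2,d}$ is standard graded.

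For $n$ odd, $(n+1)(d-1)/2$ is an integer. Beyond it the formula no longer applies, so we need a separate argument that $a_{i}=0$ for $i=(n+1)(d-1)/2+1$, and that $a_i>0$ at $i=(n+1)(d-1)/2$. For the vanishing we would use a symmetry or degree argument: the inverse system in degree $i$ is the space of forms of degree $i$ with $n+2$ points of multiplicity $b$. Once $(n+2)b>(n-1)i+\ldots$, this system is empty via restriction to the rational normal curve through the points. A form of degree $i$ that vanishes to order $b$ at the $n+2$ points restricts to the curve of degree $n-1$ with $(n+2)b>(n-1)i$ zeros, so the curve lies in its base locus. Iterating this with secant varieties, or citing the emptiness part of \cite{LPS23}, gives zero. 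The positivity at the top degree should follow by evaluating \eqref{eq:ai} there, where we expect the value to equal $1$ or a small binomial.

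For $n$ even we expect the formula to hit zero inside the valid range, at $\lfloor n(n+2)(d-1)/(2(n+1))\rfloor+1$. The threshold $n(n+2)(d-1)/(2(n+1))$ comes from the term with $r=n/2-1$, the maximal $r$. At that $r$, both $r$ and $k\le 1$ give top entries that become smaller than $n-1$ exactly when $i$ passes this value, up to a linear change. The lower-$r$ terms then telescope as the Hilbert polynomial of the secant variety contribution.

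The main obstacle is this last step: proving $a_i>0$ up to $D_{n,n+2,d}$ and $a_i=0$ just after, uniformly in $n$ and $d$. Both claims require controlling the alternating double sum. We would first try to identify the $r$-th inner sum as the $(n-1)$-fold difference of a shifted binomial, giving closed forms $\binom{c_r(i)}{2r}$-type terms that are visibly nonnegative and vanish together at the threshold. Failing that, we would fall back on the geometric interpretation of the $r$-th term as the contribution of the $r$-secant variety of the rational normal curve from \cite{LPS23}.
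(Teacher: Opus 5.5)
Your derivation of \eqref{eq:ai} matches the paper's. Emsalem--Iarrobino, the hypothesis check reducing to $2i\le (n+1)(d-1)$, the substitution $m_j=i-d+1$ with $|I|=k$, and the separate check for $i<d$ are all as in the paper. The gap is the degree. You never prove that the Hilbert function vanishes above $D_{n,n+2,d}$, nor that it is nonzero in degree $D_{n,n+2,d}$; your last three paragraphs are plans, and you flag this step yourself as the main obstacle. The paper does not extract the degree from the formula at all. It takes $D_{n,n+2,d}$ from \cite{BL23} and applies Theorem~\ref{thm:linearsystem} only for $d\le i\le D_{n,n+2,d}$, which lies in the admissible range because $D_{n,n+2,d}\le (n+1)(d-1)/2$.

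The route you propose, reading off the degree as the point where the right-hand side of \eqref{eq:ai} becomes zero, cannot work. Take $n=4$, $d=11$, so $D_{4,6,11}=24$, and $i=25$, which still satisfies $2i\le (n+1)(d-1)=50$. There \eqref{eq:ai} gives $\binom{28}{3}-6\binom{17}{3}+15\binom{6}{3}+\binom{16}{3}-6\binom{5}{3}=-4$ (the $r=0$, $k=3$ term vanishes by the convention), while at $i=24$ it correctly gives $1$. A negative number is not a dimension. So in the window just past the true degree the alternating sum is not the Hilbert function and does not stop at zero; the vanishing must come from an independent argument.

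Your heuristic for even $n$ is also off. For $n=4$ the $r=1$ binomials have top entry $3i-6(d-1)+1-kd$. This increases with $i$ and reaches $3$ near $i\approx 2d$ for $k=0$ and $i\approx 7d/3$ for $k=1$, not at $12(d-1)/5$.

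There is an easy upper bound you missed. $R_{n,n+2,d}$ is a quotient of $R_{n,n+1,d}$, whose degree is $\lfloor (n+1)(d-1)/2\rfloor$ by the strong Lefschetz property of monomial complete intersections. This settles the vanishing for odd $n$ with no restriction-to-the-curve or secant iteration, and handles degrees beyond $(n+1)(d-1)/2$ for even $n$. But the sharper bound for even $n$, and nonvanishing in degree $D_{n,n+2,d}$ in both parities, still require \cite{BL23} or a genuine proof.
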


\begin{proof}  
Assuming that $i \geq d$ we get by the Emsalem-Iarrobino correspondence (\ref{eq:emsalemiarrobino}) that the dimension in degree $i$ equals
\[\dim(\mathfrak{L}_{n-1,i}((i-d+1)^{n+2}). \]
Theorem \ref{thm:linearsystem} applied to the equigenerated case gives
\begin{align*}
\dim(\mathfrak{L}_{n-1,i}((i-d+1)^{n+2}))=&\\
\sum_{r=0}^{\lfloor (n-1)/2 \rfloor } \sum_{k=0}^{n-1-2r} (-1)^{k} 
\binom{n+2}{k}  \binom{- n r d + n r + 2 i r - 2 r d - k d + n + i - 1}{n-1}&,
\end{align*}
under the assumption  \[i-d+1 \geq (i-d+1)(n+2)-(n-1)i \iff i \leq (d-1)(n+1)/2.\]

By \cite{BL23} the degree of the series of $R_{n,n+2,d}$ equals $D_{n,n+2,d}$, and since $D_{n,n+2,d} \leq (d-1)(n+1)/2$, it follows that 
 (\ref{eq:ai}) holds for 
$i \in \{d,d+1,\ldots,D_{n,n+2,d}\}$.

Suppose now that $i < d$, in which case the dimension in degree $i$ is equal to that of the polynomial ring in degree $i$. Thus we need to check that the right hand side of \ref{eq:ai} equals $\binom{ n + i - 1}{n-1}.$ The contribution for $t=0$ is
\[\sum_{k=0}^{n-1} (-1)^{k} 
\binom{n+2}{k}  \binom{- k d + n + i - 1}{n-1} = 
\binom{ n + i - 1}{n-1}, \]
since $- k d + n + i - 1 < n-1$ when $k \geq 1.$ From the observation that $-n r d + nr + 2ir- 2rd - kd + n + i - 1$ is decreasing with $t$, it suffices to check that there is no contribution for $t=1$ and $k=0$, which is straightforward. This concludes the proof.

\end{proof}
We now introduce the notation $D_{n,m,d}$ for the degree of the Hilbert series of $R_{n,m,d}$, and we let 
\[g(n,d) = \left \lfloor \frac{(n+2)(d-1)+1}{3} 
\right \rfloor. \]
We will show that the series for $R_{n,n+2,d}$ agrees with that predicted by Fr\"oberg until degree $g(n,d)$, and that there are non-trivial syzygies in degree  $g(n,d)+1$, provided that there is room for them, that is, that $D_{n,n+2,d} \geq  g(n,d)+1$. We first need two lemmas. 

\begin{lemma} \label{lemma:sr}
Let $n \geq 3, d \geq 2.$ Then  
\[g(n,d) \leq D_{n,n+2,d}, \]
with equality if and only if 
\[(n,d) \in \{(3,2), (4,2), (6,2), (4,3)\}.\]
\end{lemma}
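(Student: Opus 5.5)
The plan is a direct comparison of the two closed formulas. I write $e=d-1\geq 1$, so that $g(n,d)=\lfloor ((n+2)e+1)/3\rfloor$, and $D_{n,n+2,d}$ is given by Theorem \ref{thm:n+2equi}. I split according to the parity of $n$, as in that theorem. In each case I first find an easy sufficient condition for the strict inequality $g(n,d)<D_{n,n+2,d}$. I then check the finitely many remaining pairs $(n,d)$ by hand, which also yields the exceptional list.

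\emph{Odd $n$.} Here $D_{n,n+2,d}=(n+1)e/2$ is an integer. A direct computation gives
\[
3D_{n,n+2,d}-\bigl((n+2)e+1\bigr)=\frac{(n-1)e-2}{2}.
\]
If $(n-1)e\geq 4$, the right-hand side is at least $1$. Then $(n+2)e+1\leq 3D_{n,n+2,d}-1$, so $g(n,d)\leq\lfloor (3D_{n,n+2,d}-1)/3\rfloor<D_{n,n+2,d}$. The condition $(n-1)e\geq 4$ fails only for $n=3$, $e=1$. In that case $D_{3,5,2}=2=\lfloor 6/3\rfloor=g(3,2)$, which is the equality case $(3,2)$.

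\emph{Even $n$.} Here $D_{n,n+2,d}=\lfloor n(n+2)e/(2(n+1))\rfloor$. Since $D_{n,n+2,d}$ is the floor of that fraction, $g(n,d)+1\leq D_{n,n+2,d}$ holds as soon as $g(n,d)+1\leq n(n+2)e/(2(n+1))$. It therefore suffices to have $((n+2)e+4)/3\leq n(n+2)e/(2(n+1))$. Clearing denominators, this is equivalent to
\[
8(n+1)\leq (n+2)(n-2)e.
\]
This holds for all $e\geq 1$ when $n\geq 8$, for $e\geq 2$ when $n=6$, and for $e\geq 4$ when $n=4$. That leaves four pairs:
\begin{itemize}
\item $(n,d)=(4,2)$: $D=\lfloor 2.4\rfloor=2$ and $g=\lfloor 7/3\rfloor=2$, so equality holds.
\item $(n,d)=(4,3)$: $D=\lfloor 4.8\rfloor=4$ and $g=\lfloor 13/3\rfloor=4$, so equality holds.
\item $(n,d)=(4,4)$: $D=\lfloor 7.2\rfloor=7$ and $g=\lfloor 19/3\rfloor=6$, so the inequality is strict.
\item $(n,d)=(6,2)$: $D=\lfloor 48/14\rfloor=3$ and $g=3$, so equality holds.
\end{itemize}
In every case $g(n,d)\leq D_{n,n+2,d}$, with equality exactly for $(n,d)\in\{(3,2),(4,2),(6,2),(4,3)\}$.

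There is no real obstacle; the argument is elementary. The only delicate point is the even case, where both sides are floors. I avoid this by comparing $g(n,d)+1$ with the unfloored fraction, which leaves only finitely many small cases to check by direct computation.
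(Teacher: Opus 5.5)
Your route is essentially the paper's: split by the parity of $n$, prove a sufficient inequality for $g(n,d)<D_{n,n+2,d}$, and check the finitely many leftover pairs by hand. Your inequalities are sharper than the paper's, so fewer pairs remain to be checked.
\begin{itemize}
\item In the odd case, your condition $(n-1)e\ge 4$ is exactly equivalent to strict inequality, whereas the paper asks for $(n-1)(d-1)\ge 8$.
\item In the even case, you compare $g+1$ with the unfloored fraction, while the paper uses the cruder bound $\lfloor x\rfloor\ge x-1$.
\end{itemize}

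There is, however, an arithmetic slip in the even case. You claim that $8(n+1)\le (n+2)(n-2)e$ holds for all $e\ge 1$ once $n\ge 8$. This is false for $n=8$, $e=1$, where the inequality reads $72\le 60$. For $e=1$ the condition is $n^2-8n-12\ge 0$, which among even $n$ requires $n\ge 10$; for $n=8$ you need $e\ge 2$.

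So the pair $(n,d)=(8,2)$ is missing from your list of leftover cases. It is harmless: $D_{8,10,2}=\lfloor 80/18\rfloor=4$ and $g(8,2)=\lfloor 11/3\rfloor=3$, so the inequality is strict there. Once you add this case, your classification of the equality cases $\{(3,2),(4,2),(6,2),(4,3)\}$ is complete and correct.
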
 

\begin{proof}
Let $n$ be odd. It is straightforward to check that \[ \frac{(n+2)(d-1)+1}{3} + 1\leq (n+2)(d-1)/2 \iff 8 \leq (n-1)(d-1). \] 
When $d=2$ this holds for $n \geq 8$, when $d=3,4$ it holds for $n\geq 5$,  when $d\geq5$ it holds for $n\geq 3$.
We are left with the case $(n,d) = (3,2)$ where we verify equality, and with the cases $(n,d) \in \{ (3,4),(5,2),(7,2)\}$ where we verify inequality.

Let $n$ be even. This time we have
 \[ \frac{(n+2)(d-1)+1}{3}+ 1 \leq \frac{n(n+2)(d-1)}{2(n+1)} -1 \iff 14 \leq (n-2)(n+2)(d-1)/(n+1). \] For $d=2$ the inequality holds for $n \geq 16$, for $d=3$ the inequality holds for $n \geq 10$, for $d=4$ the inequality holds for $n \geq 8$, for $d=5,6$, it holds for $n \geq6$ and for $d\geq 7$ the inequality holds for $n \geq 4$.
We are left with the cases $(n,d) \in \{(4,2),(6,2),(4,3)\}$ where we verify equality, and with the cases $(n,d) \in \{(8,2),(10,2),(12,2),(14,2),(6,3), (8,3),(4,4),(6,4),(4,5),(6,5)\}$ where we verify inequality.
\end{proof}

\begin{lemma} \label{lemma:differ}
Let $n \geq 3, d \geq 2.$ \begin{enumerate}
\item 
For $i \leq g(n,d)$ we have
\[a_i > 0 \text{ and } a_i =
\sum_{k = 0}^{n-1} (-1)^{k} 
\binom{n+2}{k}  \binom{ n - 1 + i - k d}{n-1}.
\] 

\item For $ i= g(n,d) + 1$, and $(n,d) \notin \{(3,2), (4,2), (6,2),(4,3) \}$, we have

\[a_i  > 0 \text{ and } a_i =
\sum_{k = 0}^{n-1} (-1)^{k} 
\binom{n+2}{k}  \binom{ n - 1 + i - k d}{n-1} + \epsilon(n,d),
\]
where 
\[\epsilon(n,d) = \begin{cases}
n & \text{ if } (n+2)(d-1) \equiv 0 \pmod{3}\\
1 & \text{ if }  (n+2)(d-1) \equiv 1 \pmod{3} \\
\binom{n+1}{2} - n-2 & \text{ if } d = 2,  (n+2)(d-1) \equiv 2 \pmod{3}\\
\binom{n+1}{2} & \text{ if } d\geq 3,  (n+2)(d-1) \equiv 2 \pmod{3}.

\end{cases}
\]
\end{enumerate}
\end{lemma}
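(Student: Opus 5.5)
The plan is to start from the closed formula (\ref{eq:ai}) of Theorem~\ref{thm:n+2equi}, valid for $0\le i\le D_{n,n+2,d}$, and to show that only the term $r=0$ survives when $i\le g(n,d)$, while at $i=g(n,d)+1$ exactly one or two extra terms from $r=1$ survive and add up to $\epsilon(n,d)$. Lemma~\ref{lemma:sr} guarantees $g(n,d)+1\le D_{n,n+2,d}$ outside the four exceptional pairs, so the formula applies in both parts.

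For $r\ge 1$ the upper entry of the binomial in (\ref{eq:ai}) is
\[ N(r,k,i)= n-1+i - r\bigl(nd-n-2i+2d\bigr) - kd = n-1 + i(1+2r) - r(n+2)(d-1) - r d - kd + r\cdot 0, \]
which I will simplify to $n-1+(2r+1)i - r(n+2)(d-1) -r - kd$ after checking the algebra. With the paper's convention, $\binom{N}{n-1}=0$ exactly when $0\le N\le n-2$, and also for $N<0$. The term with $r=0$ and $k\ge n$ vanishes as well (it is absent from the sum, and for $i\le D_{n,n+2,d}$ the corresponding Fröberg terms with $k\ge n$ are zero since $n-1+i-nd<0$ there), so the $r=0$ part is exactly the displayed Fröberg-type sum. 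For $r=1$, $k=0$ the entry is $n-2+3i-(n+2)(d-1)$, which lies in $[0,n-2]$ (or is negative) precisely when $3i\le (n+2)(d-1)$, that is, $i\le g(n,d)$ after rounding; larger $r$ and $k$ only decrease the entry, since $N$ is decreasing in $r$ for $i\le D_{n,n+2,d}\le (n+1)(d-1)/2$. One must still verify that it does not drop below $0$ to a negative value where the convention forces zero anyway; this is fine because negative upper entries give zero by convention. This proves the formula in part (1).

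For part (2), set $i=g+1$ and write $(n+2)(d-1)=3q+\rho$ with $\rho\in\{0,1,2\}$. Then $g=q$ if $\rho\in\{0,1\}$ and $g=q+1$ if $\rho=2$. The $r=1$, $k=0$ entry becomes $n-2+3-\rho$ or $n-2+6-\rho$, giving $\binom{n+1}{n-1}$-type values. Concretely, $\rho=0$ gives $\binom{n}{n-1}=n$, $\rho=1$ gives $\binom{n-1}{n-1}=1$, and $\rho=2$ gives $\binom{n+1}{n-1}=\binom{n+1}{2}$. The $r=1$, $k=1$ term has entry reduced by $d$; it is nonzero only when $\rho=2$ and $d=2$, where it equals $\binom{n-1}{n-1}=1$ and comes with weight $-(n+2)$. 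This explains the correction $-(n+2)$. Terms with $r\ge2$ or larger $k$ vanish by the same monotonicity. The sign, the range $k\le n-3$, and the fact that $n\ge3$ makes $k=1$ allowed all need to be checked.

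The main obstacle is positivity of $a_i$. For $i\le D_{n,n+2,d}$, $a_i$ is a Hilbert function value of an algebra whose socle degree is $D_{n,n+2,d}$, so it is nonzero up to the degree, and hence positive for $i\le g+1\le D_{n,n+2,d}$. In part (1), equality $g=D_{n,n+2,d}$ in the exceptional cases is still covered. So positivity follows directly from the definition of the degree via \cite{BL23}, with no estimate needed.
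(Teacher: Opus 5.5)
Your route is the paper's: positivity from Lemma~\ref{lemma:sr} plus the fact that a standard graded algebra is nonzero in every degree up to its top degree, then an analysis of which $r\ge 1$ terms of (\ref{eq:ai}) survive. However, two steps fail as written.

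\textbf{The simplified upper entry is wrong.} Since $nd-n+2d=(n+2)(d-1)+2$, the entry is
\[ N(r,k,i)=n-1+(2r+1)i-r(n+2)(d-1)-2r-kd, \]
not $\dots-r-kd$. With your version, the $(r,k)=(1,0)$ term vanishes iff $3i\le (n+2)(d-1)$. This is not equivalent to $i\le g(n,d)$ when $(n+2)(d-1)\equiv 2\pmod 3$, because then $3g(n,d)=(n+2)(d-1)+1$. So your formula would leave a term $\binom{n-1}{n-1}=1$ at $i=g(n,d)$ and contradict part (1). Likewise, your part (2) entries $n+1-\rho$ and $n+2$ would give $\binom{n+1}{2}$, $n$, $\binom{n+2}{3}$, not the values you then write.

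With $-2r$, the $(1,0)$ entry is $n-3+3i-(n+2)(d-1)$. It is $\le n-2$ iff $3i\le (n+2)(d-1)+1$, i.e.\ $i\le g(n,d)$. At $i=g(n,d)+1$ it equals $n-\rho$ for $\rho\in\{0,1\}$ and $n+1$ for $\rho=2$. This yields $n$, $1$, $\binom{n+1}{2}$ and the $d=2$ correction exactly as you state.

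\textbf{The vanishing of the $r\ge 2$ terms at $i=g(n,d)+1$ does not follow ``by the same monotonicity''.} Monotonicity in $r$ only propagates vanishing from a term that already vanishes, and at $i=g(n,d)+1$ the $r=1$ term does not vanish. You need the separate estimate the paper gives.

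The $(2,0)$ entry is $n-5+5i-2(n+2)(d-1)$, so that term is nonzero iff $5i\ge 2(n+2)(d-1)+4$. Since $g(n,d)+1\le ((n+2)(d-1)+4)/3$, this is impossible once $(n+2)(d-1)\ge 9$. The only non-excluded pair with $(n+2)(d-1)\le 8$ in which $r=2$ occurs at all (i.e.\ $n\ge 5$) is $(5,2)$, where $i=3$ and $15<18$.

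This is not a formality: for $(6,2)$ at $i=4$ the $(2,0)$ term is $\binom{5}{5}=1$. Only after this check do monotonicity in $r$ (valid since $2i<(n+2)(d-1)+2$ for $i\le D_{n,n+2,d}$) and monotonicity in $k$ kill the remaining terms.

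A minor point: for $r=1$ the sum runs only over $k\le n-3$, so $k=1$ is unavailable for $n=3$. This is harmless only because the $k=1$ correction arises just for $d=2$ with $3\mid n$, and $n=3,6$ are excluded.
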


\begin{proof}
The statement about the positivity of the coefficients follows by
Lemma \ref{lemma:sr} and Theorem \ref{thm:n+2equi}, so we have that
\[a_i =\sum_{r=0}^{\lfloor (n-1)/2 \rfloor } \sum_{k=0}^{n-1-2r} (-1)^{k} 
\binom{n+2}{k}  \binom{- n r d + n r + 2 i r - 2 r d - k d + n + i - 1}{n-1}>0\]
for 
\[
i \leq \begin{cases} 
g(n,d) + 1 & \text{ for }(n,d) \notin \{(3,2),(4,2),(6,2),(4,3)\}\\  
g(n,d) & \text{ for }(n,d) \in \{(3,2),(4,2),(6,2),(4,3)\}.\\
\end{cases}
\]

We now examine the expression \[-nrd + nr + 2ir -2rd - kd + n + i - 1\] for  $r\geq 1$.
When $r=1$ we get
\[- n d + n + 2 i- 2 d - k d + i \geq 0 \iff
i \geq ((n+2)(d-1)+2 +kd)/3.\] The right hand side is increasing with $k$. For $k=0$ we get after simplification that \[i \geq \lfloor (n+2)(d-1)+1) /3 \rfloor + 1 = g(n,d)+1.\]
When $r=2$ we get that 
\[- 2 n  d + 2 n  + 4 i - 4 d - k d + i \geq 0 \iff
i \geq 2(n+2)(d-1)+2 +kd)/5.\]
Setting $k = 0$ this implies \[i \geq 2(n+2)(d-1)+2)/5.\]
We claim that \[2(n+2)(d-1)+2)/5 - (g(n,d) + 1) \geq 1 \]
which follows from the weaker statement that
\[2(n+2)(d-1)+2)/5-
(n+2)(d-1)+2)/3 \geq 1
\iff
(n+2)(d-1)+2 \geq 15\] which for $d=2$ holds when $n\geq 11$,
for $d=3$ holds when $n \geq 5$, and for $d \geq 4$ holds when $n \geq 3.$ For the remaining cases $d=2, n \in \{5,7,8,9,10\}$ and $(n,d) =(3,3)$ one can check that 
\[g(n,d) + 1< 2(n+2)(d-1)+2 +kd)/5. \]
This means that there is no contribution from $r=2$ in the expression for $a_i$ when $i = g(n,d) + 1$.  Since $- n r d + n r + 2 i r - 2 r d - k d + i$ is decreasing with $r$, there cannot be a contribution for $r \geq 3$ either. We get that 
\[a_i = \sum_{k = 0}^{n-1} (-1)^{k} 
\binom{n+2}{k}  \binom{ n - 1 + i - k d}{n-1}\] when 
$i \leq g(n,d)$ and that
\begin{multline} \label{eq:t1}
a_i = \sum_{k = 0}^{n-1} (-1)^{k} 
\binom{n+2}{k}  \binom{ n - 1 + i - k d}{n-1} + \\
\sum_{k = 0}^{n-1} (-1)^{k} 
\binom{n+2}{k}  \binom{- n d + n + 3 i- 2 d - k d +n-1}{n-1}
\end{multline}
when
$i =g(n,d)+ 1.$

Suppose that $(n+2)(d-1)+2$ is divisible by $3$. Then 
$i=g(n,d) + 1 = ((n+2)(d-1)+2)/3,$ so $- n d + n + 3 i- 2 d - k d  = -kd.$ Thus only $k = 0$ gives a non-zero contribution, so the right summand of Equation \ref{eq:t1} equals $\binom{n-1}{n-1}.$

If $(n+2)(d-1)+2 \equiv 1 \pmod{3}$, then 
$i = g(n,d)+1= ((n+2)(d-1)+4)/3$, so
$- n d + n + 3 i- 2 d - k d = 2-kd.$ If $d \geq 3$ then only $k=0$ gives a non-zero contribution equal to $\binom{n+1}{n-1}$. If $d=2$ also $k=1$ will contribute with the term $-\binom{n+2}{1}\binom{n-1}{n-1}.$ 

If $(n+2)(d-1)+2 \equiv 2 \pmod{3}$, then
$i =  g(n,d)+1 = ((n+2)(d-1)+3)/3$, so
$- n d + n + 3 i- 2 d - k d = 1-kd$, giving the contribution $\binom{n}{n-1}$ when $k=0$.
\end{proof}

The result we are about to derive is tightly connected to the classification of the WLP for $R_{n,n+1,d}$, and for completeness, we add the corresponding statement for the WLP. In fact, we improve  \cite[Theorem 1.1]{BL23} in the case of failure by giving the minimal failing degree.

\begin{theorem} \label{thm:diffdeg}

For $n \leq 2$ or $d=1$ or $(n,d) \in 
\{(3,2), (4,2), (4,3), (6,2) \}$, the Hilbert series for $R_{n,n+2,d}$ is equal to the series predicted by Fr\"oberg for $n+2$ generic forms.

For $n \geq 3,d \geq 2$ and $(n,d) \notin 
\{(3,2), (4,2), (4,3), (6,2) \}$, the coefficients in the Hilbert series for $R_{n,n+2,d}$ equal the coefficients in the series predicted by Fr\"oberg for $n+2$ generic forms of degrees equal to $d$ until degree 
\[g(n,d)=\left \lfloor \frac{(n+2)(d-1)+1}{3} \right \rfloor,\] while in the next degree, the coefficients of the two series differ from each other.

In terms of the WLP, this means that 
$R_{n,n+1,d}$ fails the WLP for the first time in degree 
$g(n,d)$
except when $n \leq 3$ or $d=1$ or $(n,d) \in \{(4,2), (5,2), (5,3), (7,2)\}$, and in these cases, the WLP holds.   
\end{theorem}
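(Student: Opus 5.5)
The plan is to compare the formula for $a_i$ from Lemma~\ref{lemma:differ} directly with the Fröberg prediction. The Fröberg series for $n+2$ generic forms of degree $d$ has coefficients $b_i=\sum_{k\ge 0}(-1)^k\binom{n+2}{k}\binom{n-1+i-kd}{n-1}$ up to its first non-positive term. The sum in Lemma~\ref{lemma:differ} only runs over $k\le n-1$, so the first step is to show that the terms $k=n,n+1,n+2$ vanish for the relevant $i$. These terms are nonzero only when $i\ge kd-(n-1)+ (n-1)=kd$ roughly, i.e.\ $i\ge nd$. Since $g(n,d)+1\le (n+2)(d-1)/3+1<nd$ for $n\ge 3$, the truncated sum equals the untruncated one for all $i\le g(n,d)+1$.

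\textbf{Agreement up to degree $g(n,d)$.} By part (1) of Lemma~\ref{lemma:differ}, for $i\le g(n,d)$ we have $a_i=b_i$ and $a_i>0$. Every coefficient of the Fröberg expansion up to degree $g(n,d)$ is therefore positive and no truncation has yet occurred, so the two series agree through degree $g(n,d)$.

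\textbf{Disagreement in degree $g(n,d)+1$.} Outside the four exceptional pairs, part (2) gives $a_{g+1}=b_{g+1}+\epsilon(n,d)$. We check that $\epsilon(n,d)>0$ in every case; the only nontrivial case is $d=2$, where $\binom{n+1}{2}-n-2>0$ for $n\ge 3$ with $n\ne 3$, and $n=3$ is excluded. The Fröberg coefficient in this degree is then either $b_{g+1}$, if positive, or $0$ after truncation. In both cases it differs from $a_{g+1}>0$: in the first case they differ by $\epsilon>0$, and in the second $a_{g+1}>0$ while the prediction is $0$.

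\textbf{Exceptional cases.} For $n\le 2$ the Fröberg conjecture is known \cite{FR85}, and for $d=1$ both series are trivially $1$. For $(n,d)\in\{(3,2),(4,2),(4,3),(6,2)\}$, Lemma~\ref{lemma:sr} gives $D_{n,n+2,d}=g(n,d)$, so $a_i=b_i>0$ for $i\le g$ and $a_i=0$ beyond. We must then verify that $b_{g+1}\le 0$, so that Fröberg truncates there too. This is a finite computation, which I would do by evaluating $b_{g+1}$ in these four cases directly. This is the step I expect to be fiddliest, though it is purely numerical.

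\textbf{WLP statement.} We have $R_{n+1,n+2,d}/(\ell)\cong R_{n,n+2,d}$ for general $\ell$, since adding $\ell$ cuts down one variable. Also $(1-t)\HS(R_{n+1,n+2,d},t)$ is the Fröberg series in $n$ variables, because of the Stanley SLP result \cite{ST80} for $m=n+1$. So the multiplication by $\ell$ on $R_{n+1,n+2,d}$ first fails to have maximal rank exactly in the degree where the two series first differ. Reindexing $n\mapsto n-1$ turns $g(n,d)$ in the statement into $g(n-1,d)$ for $R_{n,n+1,d}$; I would double check this against the stated indexing. The exceptional list then shifts to $n\le 3$, $d=1$, and $(n,d)\in\{(4,2),(5,2),(5,3),(7,2)\}$, matching \cite{BL23}.
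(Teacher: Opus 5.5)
Your treatment of the non-exceptional case follows the paper. Once only $k\le n-1$ contributes (you get this from $g(n,d)+1<nd$, the paper from $i\le D_{n,n+2,d}\le (n+1)(d-1)/2$), part~(1) of Lemma~\ref{lemma:differ} plus positivity gives agreement through degree $g(n,d)$. Part~(2) with $\epsilon(n,d)>0$ then gives a discrepancy in degree $g(n,d)+1$, whether or not the Fr\"oberg series is truncated there.

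Where you genuinely differ is the direction of the argument for the exceptional cases and the WLP list. The paper imports the WLP classification of \cite[Theorem 1.1]{BL23} and transfers it through $R_{n,n+1,d}/(\ell)\cong R_{n-1,n+1,d}$ to get the first sentence of the theorem. You prove that sentence directly and then deduce the WLP classification from it. Your finite check goes through: the untruncated Fr\"oberg coefficients in degree $g(n,d)+1$ are $-5,-4,-14,-4$ for $(n,d)=(3,2),(4,2),(6,2),(4,3)$. Together with $D_{n,n+2,d}=g(n,d)$ from Lemma~\ref{lemma:sr}, both series therefore stop at $g(n,d)$. Your route is more self-contained, since \cite{BL23} then enters only through the degree formula used in Theorem~\ref{thm:n+2equi}, and it recovers the classification of \cite{BL23} as a corollary. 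The paper's route is shorter.

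Three repairs are needed.

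\begin{enumerate}
\item For $n\le 2$, \cite{FR85} concerns generic forms and does not by itself cover powers of general linear forms. Use the inverse system instead. A binary form of degree $i\ge d$ annihilated by $\ell_1^d,\dots,\ell_4^d$ is divisible by $L_j^{\,i-d+1}$ for each $j$, where $L_j$ is the dual linear form killed by $\ell_j$, hence by their product. The dimension is therefore $\max(0,i+1-4(i-d+1))$, which is exactly the Fr\"oberg value. The case $n=1$ is trivial.

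\item In the WLP step, it is the truncation $[(1-t)\HS(R_{n+1,n+2,d},t)]$, not $(1-t)\HS(R_{n+1,n+2,d},t)$ itself, that equals the Fr\"oberg series in $n$ variables. This is Fr\"oberg's truncation lemma, as used in the proof of Lemma~\ref{lemma:wlp}.

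\item Your doubt about the indexing is justified. The reduction shows that $R_{n,n+1,d}$ first fails the WLP in degree $g(n-1,d)$, the largest degree of agreement for $R_{n-1,n+1,d}$. This is also what the first paragraph of the paper's own proof yields and what the remark after the theorem uses, so the $g(n,d)$ in the last sentence of the statement should read $g(n-1,d)$.
\end{enumerate}
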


\begin{proof}
Let $\ell$ be a general linear form. Since 
$R_{n,n+1,d}/(\ell) \cong R_{n-1,n+1,d}$, it is clear that $R_{n,n+1,d}/(\ell)$ fails the WLP if and only if the series differ from the one predicted by Fr\"oberg for $R_{n-1,n+1,d}$. Moreover, if the series differ and $i$ is the maximal degree for which $R_{n-1,n+1,d}$ is equal to the series predicted by Fr\"oberg, then $i$ is the minimal degree for which the WLP fails.

By the classification in \cite[Theorem 1.1]{BL23}, the WLP holds for $R_{n,n+1,d}$ if and only if  $n \leq 3$ or $d=1$ or $(n,d) \in \{(4,2), (5,2), (5,3), (7,2)\},$ so the series for $R_{n,n+2,d}$ equals the one predicted by Fr\"oberg if and only if $n \leq 2$ or $d=1$ or $(n,d) \in \{(3,2), (4,2), (4,3), (6,2) \}.$

It remains the show that for the remaining cases, the Hilbert series for $R_{n,n+2,d}$ is equal to the series predicted by Fr\"oberg  until degree
\[g(n,d)=\left \lfloor \frac{(n+2)(d-1)+1}{3} \right \rfloor,\] 
while in degree $g(n,d)+1$, the two series differ from each other.

Suppose that $n \geq 3$, and that $i \leq g(n,d)$. The series predicted by Fr\"oberg is 
\[ \left[ \frac{(1-t^d)^{n+2}}{(1-t)^n} \right] =: \sum_i b_it^i.  \]
It is straightforward to check that
\[b_i = \begin{cases}
    1 & \text{if } i=0 \\
    \max(0,\sum_{k \geq 0} (-1)^{k} 
\binom{n+2}{k}  \binom{ n - 1 + i - k d}{n-1}) & \text{if $i \geq 1$ and $b_{i-1}$ is positive} \\
0 & \text{otherwise.} 
\end{cases}\]

The degree of $\sum b_i t^i$ is bounded above by $D_{n,n+2,d}$, and $D_{n,n+2,d} \leq (n+1)(d-1)/2$. Thus $i \leq (n+1)(d-1)/2$, so $i-kd \geq 0$ gives $k \leq (n+1)/2$, which implies that $k \leq n-1$ since $n \geq 3.$ Thus 

\[
\sum_{k \geq 0} (-1)^{k} 
\binom{n+2}{k}  \binom{ n - 1 + i - k d}{n-1}
=\sum_{k \geq 0}^{n-1} (-1)^{k} 
\binom{n+2}{k}  \binom{ n - 1 + i - k d}{n-1} = a_i,
\]
so 
\[b_i = \begin{cases}
1 & \text{if } i=0 \\
    \max(0,a_i) & \text{if $b_{i-1}$ is positive} \\
0 & \text{otherwise.} 
\end{cases}\]

Since $a_0=1$ and $a_i$ is positive for $i=1,\ldots,g(n,d)$ by Lemma \ref{lemma:differ}, we conclude that $b_i = a_i$ for
$i\leq g(n,d)$.

We will now prove that the two series differ in degree 
$g(n,d)+1$ for $n\geq 3, d \geq 2$ and $(n,d) \notin 
\{(3,2), (4,2), (6,2), (4,3) \}$. 
By Lemma \ref{lemma:differ}, we have $a_{g(n,d)+1} > 0$. If $b_{g(n,d)+1}=0$, then $a_{g(n,d)+1} > b_{g(n,d)+1}$, and we are done.  Otherwise $b_{g(n,d)+1} > 0$ and then \[a_{g(n,d)+1} - b_{g(n,d)+1}  = \begin{cases}
n & \text{ if } (n+2)(d-1) \equiv 0 \pmod{3}\\
1 & \text{ if }  (n+2)(d-1) \equiv 1 \pmod{3} \\
\binom{n+1}{2} & \text{ if } d\geq 3,  (n+2)(d-1) \equiv 2 \pmod{3} \\
\binom{n+1}{2} - n-2 & \text{ if } d = 2,  (n+2)(d-1) \equiv 2 \pmod{3}
\end{cases}
\]
by Lemma \ref{lemma:differ}.
\end{proof}

\begin{remark} Let 
\[ \sum_i b'_it^i =  \frac{(1-t^d)^{n+2}}{(1-t)^n}. \]
One can check that $b'_{g(n,d)} < 0$ for 
$(n,d) \in \{(7,2), (9,2), (5,3), (3,4), (3,5)\}$ and
$b'_{g(n,d)} = 0$ for $(n,d) \in \{(12,2),(3,3),(4,4)\}.$ We believe that $b'_{g(n,d)+1} > 0$ for the remaining values of $n$ and $d$. 
This would mean that  $R_{n,n+1,d}$ fails the WLP due injectivity in degree 
$g(n-1,d)$ 
except for the cases $(n,d) \in \{(8,2),(6,3),(4,4)\}$ where it 
fails in degree $g(n,d)$
due to surjectivity. This is consistent with \cite[Theorem 3.7]{BSV26} where it is shown that multiplication by a general form on $R_{n,n+1,2}$ is injective in degrees less than $g(n-1,2)$, and that it fails injectivity in degree $g(n-1,2).$ It is also consistent with \cite[Corollary 4.11]{BSV26} where it is shown that multiplication by a general form on $R_{n,n+1,3}$ is injective until degree $g(n-1,3)-1,$ and with \cite[Remark 4.12]{BSV26} where experimental evidence is given for the sharpness of this bound.
\end{remark}

\section{A connection to probability theory and the Iarrobino-Fr\"oberg conjecture}

In this section we will show that the Iarrobino-Fr\"oberg conjecture fails for all $n$ large enough for any given $d$ and $k$. In order to do this, we will need bounds for the degree of the Hilbert series predicted by Fr\"oberg and we will use
approximations from probability theory to achieve these bounds.  For $n$ and $m=n+k$, we would like to estimate when the coefficients of the polynomial
\[
  \frac{  (1-t^d)^{n+k}}{(1-t)^n} = (1+t+\cdots+t^{d-1})^n(1-t^d)^k = (1+t+\cdots+t^{d-1})^{n+k}(1-t)^k
\]
become negative for the first time. Observe that 
\[
 \frac{(1+t+\cdots+t^{d-1})^{n+k}}{d^{n+k}}
\]
is the generating function for the probability density for a sum of $n+k$ independent uniformly distributed random variables on
$\{0,1,\dots,d-1\}$. Hence we can use refinements of the central limit theorem from probability theory to estimate the coefficients for large $n$ and obtain an approximation of the degree of the Hilbert series predicted by
Fr\"oberg. 

For a uniformly distributed random variable $X$ in
$\{0,1,\dots,d-1\}$ we have that
\[
  \mathbb E(X) = \frac{d-1}{2}, \quad\sigma^2 = V(X) = \frac{d^2-1}{12}, \quad \text{and}\quad \mathbb
  E(X^k)<\infty, \quad\text{for all $k$}. 
\]

In the following theorem, $P_n(N)$ denotes the probability that $\sum_{i}^n X_i = N$, where $X_1,X_2,X_3,\dots,$ is a sequence of independent integer-valued random variables. If they are uniformly distributed on $\{0,1,\dots,d-1\}$, we have that $P_n(N)$ is the coefficient of $t^N$ in $(1+t+t^2+\cdots+t^{d-1})^n/d^n$.

\begin{definition}[{\cite[1.14 on page 139]{Petrov}}]
\label{dfn:qi}
For non-negative integers $i$, define
\[
q_i(x)
= \frac{1}{\sqrt{2\pi}} e^{-x^{2}/2}
   \sum H_{i + 2s}(x)
   \prod_{m=1}^{i}
      \frac{1}{k_m!}
      \left(
         \frac{\gamma_{\,m+2}}{(m+2)!\,\sigma^{\,m+2}}
      \right)^{k_m}.
\]
where the sum is taken over all non-negative solutions to the
equations $k_1+2k_2+\dots+ik_i=i$ and $k_1+k_2+\dots+k_i=s$. ($H_j$ is
the Hermite polynomial of degree $j$ and $\gamma_i$ is the
cumulant of order $i$.)
\end{definition}

\begin{theorem}[{\cite[Thm 13, Chapter VII]{Petrov}}]\label{thm:Petrov}
Let \(X_n\) be a sequence of integer-valued random variables having a common distribution. Suppose that \(V(X_1)= \sigma^2>0\), \(\mathbb E|X_1|^k<\infty\).
for some integer $k\ge 3$ and that the maximal span of the distribution $X_1$ is equal to $1$. Then
\[
\sigma \sqrt{n}\, P_n(N)
=
\frac{1}{\sqrt{2\pi}}\, e^{-x^{2}/2}
  + \sum_{i = 1}^{k-2} \frac{q_i(x)}{n^{i/2}}
  + o\!\left( \frac{1}{n^{(k-2)/2}} \right),
\]
uniformly in \(N\)\; \((-\infty < N < \infty)\).
Here \(x = \frac{N - n \mathbb E (X_1)}{\sigma \sqrt{n}}\)
and the \(q_i(x)\)  are the functions defined in Definition~\ref{dfn:qi}.
\end{theorem}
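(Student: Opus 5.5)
The plan is to prove the expansion by Fourier inversion on the lattice, which is the standard route to local limit theorems with Edgeworth corrections. Let $f(t)=\mathbb E(e^{itX_1})$ and write $\mu=\mathbb E(X_1)$. Since $\sum_{i=1}^n X_i$ is integer-valued,
\[
P_n(N)=\frac{1}{2\pi}\int_{-\pi}^{\pi} f(t)^n e^{-itN}\,dt .
\]
Substituting $t=u/(\sigma\sqrt n)$ and writing $N=n\mu+x\sigma\sqrt n$ gives
\[
\sigma\sqrt n\,P_n(N)=\frac{1}{2\pi}\int_{|u|\le \pi\sigma\sqrt n}\Bigl(e^{-iu\mu/(\sigma\sqrt n)}f\bigl(u/(\sigma\sqrt n)\bigr)\Bigr)^n e^{-iux}\,du .
\]
The right-hand side is to be compared with $\frac1{2\pi}\int_{\mathbb R} e^{-u^2/2}\bigl(1+\sum_{i=1}^{k-2}P_i(iu)\,n^{-i/2}\bigr)e^{-iux}\,du$. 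Here $P_i$ are the usual Edgeworth polynomials, obtained by expanding
\[
\exp\Bigl(\sum_{m\ge 3}\frac{\gamma_m (iu)^m}{m!\,\sigma^m n^{(m-2)/2}}\Bigr)
\]
formally in powers of $n^{-1/2}$. The identity $\frac1{2\pi}\int (iu)^j e^{-u^2/2}e^{-iux}\,du=H_j(x)\frac{1}{\sqrt{2\pi}}e^{-x^2/2}$ then turns each term into exactly the $q_i(x)$ of Definition~\ref{dfn:qi}. The multinomial bookkeeping over $k_1+2k_2+\dots+ik_i=i$ with $s=\sum k_m$ is precisely the coefficient extraction in that expansion.

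I would split the $u$-integral into three ranges and bound the error in each uniformly in $x$, using only $|e^{-iux}|=1$.

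\emph{(a) Outer range $\varepsilon\sigma\sqrt n\le |u|\le \pi\sigma\sqrt n$.} Because the maximal span is $1$, $|f(t)|<1$ for $0<|t|\le\pi$. By continuity, $|f(t)|\le q<1$ on $\varepsilon\le|t|\le\pi$, so this range contributes $O(\sqrt n\,q^n)=o(n^{-(k-2)/2})$.

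\emph{(b) Discarded Gaussian tails.} Extending the Edgeworth integrals from $|u|\le\varepsilon\sigma\sqrt n$ to all of $\mathbb R$ costs only exponentially small terms, since each integrand is a polynomial times $e^{-u^2/2}$.

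\emph{(c) Inner range $|u|\le\varepsilon\sigma\sqrt n$.} The finite $k$-th moment gives a cumulant expansion
\[
\log f(t)=i\mu t-\frac{\sigma^2t^2}{2}+\sum_{m=3}^{k}\frac{\gamma_m(it)^m}{m!}+o(|t|^k)\quad (t\to0).
\]
Hence the integrand equals $e^{-u^2/2}\exp(\psi_n(u))$, where $\psi_n(u)$ is the truncated cumulant sum plus a remainder that is $o(|u|^k n^{-(k-2)/2})$.

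The main obstacle is step (c): showing that
\[
\Bigl|e^{\psi_n(u)}-1-\sum_{i=1}^{k-2}P_i(iu)n^{-i/2}\Bigr|e^{-u^2/2}\le o\bigl(n^{-(k-2)/2}\bigr)\,\bigl(|u|^{k}+|u|^{3(k-2)}\bigr)e^{-u^2/4}
\]
uniformly on $|u|\le\varepsilon\sigma\sqrt n$. The difficulty is that only $k$ moments are available, so the remainder is $o(\cdot)$ rather than $O(\cdot)$, and the truncation of the exponential series has to be controlled simultaneously.

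I would handle this as in Petrov's Lemma for Edgeworth expansions of characteristic functions. First, choose $\varepsilon$ small enough that $|\psi_n(u)|\le u^2/4$ on the inner range. Then apply the elementary bound
\[
\Bigl|e^z-\sum_{j<r}\frac{z^j}{j!}\Bigr|\le\frac{|z|^r}{r!}e^{|z|},
\]
combined with the Taylor remainder of $\log f$. Finally, split further into $|u|\le n^{\delta}$, where the $o$-term is small, and $n^{\delta}<|u|\le\varepsilon\sigma\sqrt n$, where $e^{-u^2/4}$ dominates. Integrating the resulting bound against $du$ gives $o(n^{-(k-2)/2})$ independently of $x$, which completes the proof.
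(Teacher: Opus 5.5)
Your proposal is correct. It is the standard proof of this local Edgeworth expansion, the one in Petrov: lattice Fourier inversion; the outer range is killed by $|f(t)|\le q<1$, which follows from maximal span $1$; the inner range is handled by the cumulant expansion of $\log f$ (Petrov's Chapter VI lemma on expanding the characteristic function of the normalized sum), with a split at $|u|=n^{\delta}$. The paper gives no proof of its own and simply cites Petrov. One small point: writing the characteristic function of the sum as $f(t)^n$ uses independence of the $X_i$, which the displayed statement omits but the surrounding text assumes.
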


For functions $\mathbb N\rightarrow \mathbb Q$ we define the backward
difference operator $\Delta$ by $(\Delta f)(i) = f(i)-f(i-1)$, for all
$i\in \mathbb N$. For functions $\mathbb R\longrightarrow \mathbb R$
and a positive number $h$, we define the backward difference operator
$\nabla_h$ by $(\nabla_h f)(x) = f(x)-f(x-h)$. It is well known that
\[
  (\nabla_h^k f)(x)  = h^k f^{(k)}(x) + o(h^{k+1}).
\]
for sufficiently differentiable functions. Note that the functions
$q_i$, $i\ge 1$ and $e^{-x^2/2}$ are analytic.

\begin{theorem}\label{thm:Fdeg}
Let $D_{n,m,d}$ be the degree of the Hilbert series of $R_{n,m,d}$ and
let $\xi^{(k)}_1$ be the smallest root of the Hermite
polynomial $H_{k}(x)$. Then
\[
  \lim_{n\to\infty} \frac{D_{n,n+k,d}-(n+k)(d-1)/2}{\sqrt{(n+k)(d^2-1)/12}} = \xi^{(k)}_1.
\]
\end{theorem}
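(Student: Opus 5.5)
We read $D_{n,n+k,d}$ here as the degree of the series predicted by Fr\"oberg, $\bigl[(1-t^d)^{n+k}/(1-t)^n\bigr]$. This is the quantity the probabilistic set-up computes, and it is what the limit in Theorem~\ref{thm:Fdeg} concerns. Put $m=n+k$, $\sigma^2=(d^2-1)/12$, $h=1/(\sigma\sqrt m)$ and $x_N=(N-m(d-1)/2)h$.

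\textbf{Step 1: reduce to a finite difference.} By the identity displayed at the start of the section, the coefficient of $t^N$ in $(1-t^d)^{m}/(1-t)^n$ is
\[
c_N = d^{m}\,(\Delta^k P_m)(N),
\]
where $P_m(N)$ is the coefficient of $t^N$ in $(1+t+\cdots+t^{d-1})^m/d^m$. Since $c_0=1>0$ and the predicted series is truncated at its first non-positive coefficient, the degree is one less than the first $N$ with $c_N\le 0$. It therefore suffices to show, for every $\varepsilon>0$ and all large $n$:
\begin{itemize}
\item[(a)] $c_N>0$ whenever $x_N\le \xi^{(k)}_1-\varepsilon$;
\item[(b)] some $N$ with $x_N\le \xi^{(k)}_1+\varepsilon$ has $c_N<0$.
\end{itemize}
Since $(D_{n,n+k,d}-(n+k)(d-1)/2)h$ is exactly the normalised quantity in the statement, (a) and (b) give the limit.

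\textbf{Step 2: apply Petrov's expansion on a compact window.} Apply Theorem~\ref{thm:Petrov} with some $K\ge k+3$; this is allowed because the uniform distribution has span $1$ and all moments finite. Taking $\Delta^k$ in $N$ amounts to taking $\nabla_h^k$ in $x$. Applied to the main term $\varphi(x)=e^{-x^2/2}/\sqrt{2\pi}$ this gives $h^k\varphi^{(k)}(x)+O(h^{k+1})$. The correction $q_i(x)m^{-i/2}$ contributes $O(h^{k+i})$. The remainder is a combination of $2^k$ values each of size $o(m^{-(K-2)/2})$, which is $o(h^{k+1})$. All these bounds are uniform for $x$ in a compact set. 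Using $\varphi^{(k)}(x)=(-1)^kH_k(x)\varphi(x)$ (probabilists' Hermite polynomials), we obtain
\[
\sigma\sqrt m\,\Delta^kP_m(N)=h^k\bigl((-1)^kH_k(x_N)\varphi(x_N)+O(h)\bigr).
\]
The factor $(-1)^kH_k(x)$ is positive for $x<\xi^{(k)}_1$ and changes sign at the simple root $\xi^{(k)}_1$. Consequently, on $[-A,\xi^{(k)}_1-\varepsilon]$ the coefficients are positive for large $n$. Moreover, at the lattice point nearest $x=\xi^{(k)}_1+\varepsilon/2$ the coefficient is negative, since the grid spacing $h\to0$. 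This proves (b) and the part of (a) with $x_N\ge -A$.

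\textbf{Step 3: the left tail $x_N<-A$ (main obstacle).} Here the main term $\varphi(x)H_k(x)$ is no longer bounded away from zero, so the uniform $o(\cdot)$ error of Theorem~\ref{thm:Petrov} cannot control the sign. The first thing I would try is a direct monotonicity argument. Write $p^{(j)}_N=\Delta^jP_m(N)$ and show by induction on $j\le k$ that $p^{(j)}_N>0$ for all $N\le m(d-1)/2-A_j\sqrt m$. The step $p^{(j)}_N=p^{(j-1)}_N-p^{(j-1)}_{N-1}>0$ requires $p^{(j-1)}$ to be strictly increasing on that range. I would try to deduce this from a ratio bound $P_m(N)/P_m(N-1)\ge 1+c|x_N|h$ in the left half, obtained by exponential tilting, i.e.\ a saddle-point local limit theorem applied to the tilted uniform distribution. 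A cleaner fallback is to cite a non-uniform version of the local expansion, with error $o\bigl(h^{K-2}(1+|x|)^{-K}\bigr)$ (Petrov, Ch.~VII, non-uniform estimates). This is legitimate because the uniform distribution has all moments and exponential moments. With that bound the error stays dominated by $h^k|H_k(x)|\varphi(x)$ out to $|x|\asymp\sqrt{\log m}$. Beyond that range, $N$ is so far into the left tail that positivity follows from the tilted ratio bound.
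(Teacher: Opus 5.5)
Your Steps 1 and 2 are exactly the paper's proof. The paper writes the coefficient as $d^{n+k}(\Delta^kP_{n+k})(N)$, applies Theorem~\ref{thm:Petrov}, converts $\Delta^k$ into $\nabla_h^k$ to get $\frac1h\Delta^kP_{n+k}(N)=h^kq_0^{(k)}(x)+o(h^{k+1})$, and reads the sign off $(-1)^kH_k$. Your reading of $D_{n,n+k,d}$ as the degree of the Fr\"oberg-predicted series is also the one the proof and Theorem~\ref{thm:iarrobino} actually use. Read literally, Theorem~\ref{thm:n+2equi} makes the normalised quantity tend to $0$, not $\xi^{(2)}_1=-1$, when $k=2$. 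The paper stops after the compact-window analysis and asserts $\Delta^kP_{n+k}(N)>0$ for all $0\le N<(n+k)(d-1)/2+\xi^{(k)}_1\sigma\sqrt{n+k}$. The uniform $o(h^{k+1})$ error does not give this once $|q_0^{(k)}(x_N)|\lesssim h$, i.e.\ for $|x_N|\gtrsim\sqrt{\log m}$. So your Step 3 isolates a real omission, and it matters: the degree is the \emph{first} sign change, and Theorem~\ref{thm:iarrobino} needs positivity for all $N$ up to about $g(n,d)$, where $x_N\asymp-\sqrt m$.

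But Step 3 as written does not close this gap. The bound $P_m(N)/P_m(N-1)\ge 1+c|x_N|h$ only controls $\Delta P_m$. Positivity of $\Delta^{j-1}P_m$ is not monotonicity of $\Delta^{j-1}P_m$, so your induction has nothing to drive it for $j\ge2$. Already for $k=2$, with $r_N=P_m(N)/P_m(N-1)$, one has $\Delta^2P_m(N)=P_m(N-2)\bigl((r_N-1)(r_{N-1}-1)-(r_{N-1}-r_N)\bigr)$. So you also need the ratios to vary slowly, $r_{N-1}-r_N=o\bigl((r_N-1)^2\bigr)$; the sequence $1,3,4$ has both ratios $>1$ and yet $\Delta^2<0$. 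The same objection applies to your last sentence, which is meant to cover exactly the range $|x_N|\gtrsim\sqrt{\log m}$ that the non-uniform estimate leaves open.

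What is needed there is a local expansion that is uniform in the exponential tilt. Choose $\lambda<1$ so that the law proportional to $\lambda^j$ on $\{0,\dots,d-1\}$ has mean $N/m$, and let $\pi_\lambda(M)$ be the probability that $m$ independent such variables sum to $M$. Then $P_m(N-j)=\lambda^jP_m(N)\,\pi_\lambda(N-j)/\pi_\lambda(N)$. An Edgeworth expansion of $\pi_\lambda$, uniform for $\lambda$ in a compact subset of $(0,1]$, yields $\Delta^kP_m(N)=(1-\lambda)^kP_m(N)\bigl(1+O(1/|x_N|)\bigr)$ for $\delta m\le N\le m(d-1)/2-A\sigma\sqrt m$. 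This gives positivity for $A$ large, and makes the non-uniform estimate unnecessary. For $0\le N\le\delta m$, where the tilt degenerates, argue directly: $c_N=\sum_j(-1)^j\binom kjQ_n(N-jd)$, with $Q_n$ the coefficients of $(1+t+\cdots+t^{d-1})^n$. The elementary bound $Q_n(M)/Q_n(M-1)\ge(n-M+1)/M$ then makes the $j=0$ term dominate for small $\delta$. With these two ingredients added to your Steps 1--2 the argument is complete, which the paper's is not.
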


\begin{proof}
The coefficient of $x^N$ in $(1+x+\dots+x^{d-1})^{n+k}(1-x)^{k}$ is
given by
\[
  d^{n+k}\left(\Delta^{k} P_{n+k}\right)(N)
\]
By Theorem~\ref{thm:Petrov}, we have that
\[
  \sigma \sqrt{n+k}\, P_{n+k}(N)
=
\frac{1}{\sqrt{2\pi}}\, e^{-x^{2}/2}
  + \sum_{i = 1}^{m+1} \frac{q_i(x)}{\sqrt{n}^{i}}
  + o\!\left( \frac{1}{\sqrt{n}^{m+1}} \right),
\]
and with $h = 1/(\sigma\sqrt{n+k})$ and $q_0(x) = 1/\sqrt{2\pi}e^{-x^2/2}$, we get 
\begin{multline*}
  \frac{1}{h}\left(\Delta^k P_{n+k}\right)(N) = 
  \sum_{i = 0}^{k+1} (\sigma h)^i(\nabla_h^kq_i)(x)
  + o\!\left( h^{k+1} \right)\\
  = 
  \sum_{i = 0}^{k+1} \left(\sigma^i h^{k+i} q_i^{(k)}(x) +
    o(h^{k+1+i})\right)
  + o\!\left( h^{k+1} \right) = h^k q_0^{(k)} (x)+ o\!\left( h^{k+1} \right) 
\end{multline*}
This means that for $n$ large enough,
  $\left(\Delta^kP_{n+k}\right)(N)$ will have the same sign as 
\[
q_0^{(k)} (x) = q_0^{(k)}\left(\frac{N-(n+k)(d-1)/2}{\sigma\sqrt{n+k}}\right).
\]
If $\xi^{(k)}_1<\xi^{(k)}_2<\dots\xi^{(k)}_k,$ are roots of $q_0^{(k)}(x)=(-1)^{k}H(x)q_0(x)$, we have that
for $n$ large enough $\left(\Delta^k P_{n+k}\right)(N)>0$, for 
\[
  0\le N < (n+k)(d-1)/2+\xi^{(k)}_1 \sigma\sqrt{n+k}
\]
and $\left(\Delta^k P_{n+k}\right)(N)<0$, for 
\[
  \qquad (n+k)(d-1)/2+\xi^{(k)}_1 \sigma\sqrt{n+k}\le N <
  (n+k)(d-1)/2+\xi^{(k)}_2 \sigma\sqrt{n+k}.
\]
Since the degree of $R_{n,n+k,d}$ is given by the first sign change of 
\(\left(\Delta^m P_{n+k}\right)(N) \) we conclude that
\begin{equation} \label{eq:limit} 
  \lim_{n\to\infty}
  \frac{D_{n,n+k,d}-(n+k)(d-1)/2}{\sqrt{(n+k)(d^2-1)/12}} = \xi^{(k)}_1.
 \end{equation}
\end{proof}

\begin{remark}
    The convergence is faster if we use symmetric differences since $(\nabla^k_h f)(x) = h^kf^{(k)}(x-hk/2) + o(h^{k+2})$. It is also worth noting that $q_i(x)=0$ for odd $i$.
    \begin{figure}[!ht]
    \centering


    \caption{We illustrate the convergence in Equation~\ref{eq:limit} for $k=2$, $k=5$ and $k=10$ with $n$ from $1$ to $1000$ and $d=5$. $H_2(x) = x^2-1$ and $\xi^{(2)}_1=-1$, $H_5(x)=-x^{5}+10\,x^{3}-15\,x$ and $\xi^{(5)}_1 = -\sqrt{5+\sqrt{10}} \approx -2.857$, $H_{10}(x) = x^{10}-45\,x^{8}+630\,x^{6}-3150\,x^{4}+4725\,x^{2}-945$ and $\xi^{(10)}_1 \approx -4.859$}
    \label{fig:placeholder}
\end{figure}
\end{remark}

We can now state and prove the main result in this section which shows that the Iarrobino-Fr\"oberg Conjecture fails for all large enough values of $n$ for any fixed $d\ge 2$ and $k\ge 2$.

\begin{theorem} \label{thm:iarrobino}
For any $k\ge 2$ and any $d\ge 2$, the Hilbert series of $R_{n,n+k,d}$ differs from the Hilbert series predicted by Fröberg for all $n$ large enough. 
\end{theorem}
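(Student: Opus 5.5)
The plan is to propagate the ``extra'' syzygy among the first $n+2$ powers, found in Theorem~\ref{thm:diffdeg}, to $n+k$ powers. Theorem~\ref{thm:Fdeg} will guarantee that this syzygy lives in a degree where Fr\"oberg's prediction for $n+k$ forms is still strictly positive, so that it is not hidden by the truncation.

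\textbf{Step 1: the Fr\"oberg bound is an equality exactly when the sequential multiplication maps are injective.} Fix $k\ge 2$, $d\ge 2$. Write $B_j=S/(\ell_1^d,\ldots,\ell_j^d)$, so that $B_{n+2}=R_{n,n+2,d}$ and $B_{n+k}=R_{n,n+k,d}$. For each $j$ and each degree $i$ there is an exact sequence
\[
B_j(-d)_i\xrightarrow{\cdot\ell_{j+1}^d}(B_j)_i\to (B_{j+1})_i\to 0 .
\]
Hence $\dim(B_{j+1})_i\ge \dim(B_j)_i-\dim(B_j)_{i-d}$, with equality if and only if the map is injective in that degree. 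This is Fr\"oberg's inequality argument. Iterating it from $B_0=S$ shows that $\HS(B_{n+k},t)\ge\left[(1-t^d)^{n+k}/(1-t)^n\right]$ coefficientwise. Moreover, in any degree $i$ where the truncated series has not yet been cut off, equality in all degrees $\le i$ forces every multiplication map $\cdot\ell_{j+1}^d\colon (B_j)_{i'-d}\to (B_j)_{i'}$ to be injective for all $j<n+k$ and all $i'\le i$. I would spell this out as a small lemma, arguing by induction on $i$ and using that all intermediate coefficients are positive, so that no truncation interferes.

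\textbf{Step 2: one of these maps is not injective.} Let $n$ be large, so that $(n,d)$ avoids the exceptional list and Theorem~\ref{thm:diffdeg} applies. Then $\HS(B_{n+2})$ agrees with Fr\"oberg's series through degree $g=g(n,d)$ and differs in degree $g+1$. By Step 1 applied with $n+2$ forms, some map $\cdot\ell_{j+1}^d$ with $j\le n+1$ fails to be injective in degree $g+1$; equivalently, there is a non-Koszul syzygy among $\ell_1^d,\ldots,\ell_{n+2}^d$ in degree $g+1$. The same failure occurs inside the chain for $B_{n+k}$, because $B_j$ for $j\le n+2$ is literally the same algebra, the first $n+2$ general forms being general.

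\textbf{Step 3: the Fr\"oberg coefficients for $n+k$ forms are still positive there.} It remains to check that all coefficients of $(1-t^d)^{n+k}/(1-t)^n$ in degrees $\le g+1$ are positive, so that the truncation has not yet occurred. The proof of Theorem~\ref{thm:Fdeg} shows that, for $n$ large, the first sign change of these coefficients happens at
\[
(n+k)(d-1)/2+\xi^{(k)}_1\sqrt{(n+k)(d^2-1)/12}+o(\sqrt n).
\]
On the other hand, $g+1\approx (n+2)(d-1)/3$. Since $(d-1)/3<(d-1)/2$ and the correction term is only $O(\sqrt n)$, we get $g+1$ strictly below the predicted degree for all $n\gg0$. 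The analogous positivity for the intermediate $B_j$ holds by the same estimate. Alternatively, it follows because the expected coefficients for $j\le n+k$ forms dominate those for $n+k$ forms. Combining with Step 1, the failure of injectivity found in Step 2 makes $\dim (R_{n,n+k,d})_{g+1}$ strictly larger than Fr\"oberg's coefficient, which proves the theorem.

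The main obstacle is Step 1: the precise equivalence between coefficientwise equality with the truncated series and injectivity of all sequential maps in the relevant range. The delicate point is that equality is only known in the final algebra $B_{n+k}$, while injectivity must be deduced for each intermediate $B_j$. I would handle this by proving the two inequalities
\[
\dim(B_{j+1})_i\ge \dim(B_j)_i-\dim(B_j)_{i-d}
\qquad\text{and}\qquad
\dim(B_j)_i\ge b^{(j)}_i,
\]
where $b^{(j)}_i$ denotes the coefficient of the Fr\"oberg series for $j$ forms. I would then observe that any strict inequality at an intermediate stage propagates upward without cancellation. This holds as long as all expected coefficients in degrees $\le g+1$ are positive, which is where the asymptotic input from Theorem~\ref{thm:Fdeg} enters.
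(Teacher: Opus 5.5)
Your proposal is correct and is essentially the paper's proof. The non-Koszul syzygy among $\ell_1^d,\ldots,\ell_{n+2}^d$ in degree $g(n,d)+1$ from Theorem~\ref{thm:diffdeg} persists for $n+k$ forms, and Theorem~\ref{thm:Fdeg} puts $g(n,d)+1$ below the first non-positive Fr\"oberg coefficient for $n\gg 0$; your Step~1 spells out what the paper leaves implicit. Two harmless corrections: iterating the one-step inequality only gives Fr\"oberg's lexicographic (not coefficientwise) bound, and the discrepancy is forced in the first degree $i_0\le g(n,d)+1$ where some map $\cdot\ell_{j+1}^d$ fails to be injective (possibly with $j\ge n+2$ and $i_0<g(n,d)+1$), not necessarily in degree $g(n,d)+1$ itself---your minimal-degree induction in Step~1 is exactly what handles this.
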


\begin{proof}
We have from Theorem~\ref{thm:diffdeg} that there are non-trivial syzygies among $n+2$ $d$'th powers of general linear forms in degree $g(n,d) + 1$, where \[
g(n,d) = \left \lfloor \frac{(n+1)(d-1)+1}{3} \right \rfloor.
\]
These syzygies remain for $n+k$ $d$th powers of general linear forms when $k\ge2$. Observe they are not in the module generated by the Koszul syzygies and they only involve $n+2$ of the generators.
By Theorem \ref{thm:Fdeg} this is below $D_{n,n+k,d}$ for $k\ge 2$, $d\ge 2$ and $n$ large enough since 
\begin{multline}\label{eq:compare}
\frac{g(n,d)+1-(n+k)(d-1)/2}{\sqrt{(n+k)(d^2-1)/12}} <\frac{(n+1)(d-1)/3+2 -(n+k)(d-1)/2}{\sqrt{(n+k)(d^2-1)/12}} \\
<\frac{(n+k)(d-1)/3+2 -(n+k)(d-1)/2}{\sqrt{(n+k)(d^2-1)/12}} = \frac{2-(n+k)(d-1)/6}{\sqrt{(n+k)(d^2-1)/12}}
\end{multline}
which tends to $-\infty$ when $n$ grows. Hence the Hilbert series of $R_{n,n+k,d}$ does not agree with the Hilbert series predicted by Fröberg.
\end{proof}

\begin{remark}
    Since Theorem~\ref{thm:iarrobino} does not give precise information about what large enough means, it is useful to look at some examples as illustrated in Figure~\ref{fig:iarrobino3} for $k=2$, $k=5$ and $k=10$, respectively. We also plot the graph for the quotient in Equation~\ref{eq:compare} compared to the quotient in Equation~\ref{eq:limit} for $d=5$ and $n$ from $1$ to $100$ in Figure~\ref{fig:graf}.
\end{remark}
\begin{figure}[!ht]
\centering
\begin{tikzpicture}[scale=0.19]
 \draw (0,0) -- (32,0) -- (32,32) -- (0,32) -- (0,0);
  \def\gap{0.20}
  \newcommand{\coord}[2]{
    \pgfmathtruncatemacro{\bx}{(#1-1)/5}
    \pgfmathtruncatemacro{\by}{(#2-1)/5}
    \pgfmathsetmacro{\xx}{#1 + \gap*\bx}
    \pgfmathsetmacro{\yy}{#2 + \gap*\by}
  }

  \newcommand{\one}[2]{
    \coord{#1}{#2}
    \fill[gray!50] (\xx,\yy) circle (0.3);
  }

  \newcommand{\zero}[2]{
    \coord{#1}{#2}
    \draw[line width=0.18pt] (\xx,\yy) circle (0.3);
    \fill[white] (\xx,\yy) circle (0.3);
  }

  \foreach \x in {1,...,30}{
    \foreach \y in {1,...,30}{
      \zero{\x}{\y}
    }
  }

  \foreach \x in {6,7}{ \one{\x}{28} }
  \foreach \x in {9,...,30}{ \one{\x}{28} }
  \foreach \x in {5,...,30}{ \one{\x}{27} }
  \foreach \x in {4,...,30}{ \one{\x}{26} }
  \foreach \x in {3,...,30}{ \one{\x}{25} }
  \foreach \x in {3,...,30}{ \one{\x}{24} }
  \foreach \x in {2,...,30}{ \one{\x}{23} }
  \foreach \x in {3,...,30}{ \one{\x}{22} }
  \foreach \x in {2,...,30}{ \one{\x}{21} }
  \foreach \x in {2,...,30}{ \one{\x}{20} }
  \foreach \x in {3,...,30}{ \one{\x}{19} }
  \foreach \y in {1,...,18}{
    \foreach \x in {2,...,30}{ \one{\x}{\y} }
  }
\end{tikzpicture}

\begin{tabular}{cc}
\\
\\
\begin{tikzpicture}[scale=0.19]
 \draw (0,0) -- (32,0) -- (32,32) -- (0,32) -- (0,0);
  \def\gap{0.20}

  \newcommand{\coord}[2]{
    \pgfmathtruncatemacro{\bx}{(#1-1)/5}
    \pgfmathtruncatemacro{\by}{(#2-1)/5}
    \pgfmathsetmacro{\xx}{#1 + \gap*\bx}
    \pgfmathsetmacro{\yy}{#2 + \gap*\by}
  }

  \newcommand{\one}[2]{
    \coord{#1}{#2}
    \fill[gray!50] (\xx,\yy) circle (0.3);
  }

  \newcommand{\zero}[2]{
    \coord{#1}{#2}
    \draw[line width=0.18pt] (\xx,\yy) circle (0.3);
    \fill[white] (\xx,\yy) circle (0.3);
  }

  \foreach \x in {1,...,30}{
    \foreach \y in {1,...,30}{
      \zero{\x}{\y}
    }
  }

  \foreach \x in {27,28,29,30}{ \one{\x}{24} }

  \one{14}{23}
  \foreach \x in {16,...,30}{ \one{\x}{23} }
  \one{9}{22}
  \foreach \x in {11,...,30}{ \one{\x}{22} }
  \foreach \x in {9,...,30}{ \one{\x}{21} }
  \foreach \x in {7,...,30}{ \one{\x}{20} }
  \foreach \x in {6,...,30}{ \one{\x}{19} }
  \foreach \x in {6,...,30}{ \one{\x}{18} }
  \foreach \x in {5,...,30}{ \one{\x}{17} }
  \foreach \x in {6,...,30}{ \one{\x}{16} }
  \foreach \x in {5,...,30}{ \one{\x}{15} }
  \foreach \y in {11,...,14}{
    \foreach \x in {4,...,30}{ \one{\x}{\y} }
  }
  \foreach \y in {9,10}{
    \foreach \x in {3,...,30}{ \one{\x}{\y} }
  }
  \foreach \x in {4,...,30}{ \one{\x}{8} }
  \foreach \y in {1,...,7}{
    \foreach \x in {3,...,30}{ \one{\x}{\y} }
  }
\end{tikzpicture}
&\begin{tikzpicture}[scale=0.19]
 \draw (0,0) -- (32,0) -- (32,32) -- (0,32) -- (0,0);
  \def\gap{0.20}
  \newcommand{\coord}[2]{
    \pgfmathtruncatemacro{\bx}{(#1-1)/5}
    \pgfmathtruncatemacro{\by}{(#2-1)/5}
    \pgfmathsetmacro{\xx}{#1 + \gap*\bx}
    \pgfmathsetmacro{\yy}{#2 + \gap*\by}
  }
  \newcommand{\one}[2]{
    \coord{#1}{#2}
    \fill[gray!50] (\xx,\yy) circle (0.3);
  }
  \newcommand{\zero}[2]{
    \coord{#1}{#2}
    \draw[line width=0.18pt] (\xx,\yy) circle (0.3);
    \fill[white] (\xx,\yy) circle (0.3);
  }
  \foreach \x in {1,...,30}{
    \foreach \y in {1,...,30}{
      \zero{\x}{\y}
    }
  }
  \one{30}{7}
  \foreach \x in {24,...,30}{ \one{\x}{6} }
  \foreach \x in {20,...,30}{ \one{\x}{5} }
  \foreach \x in {18,...,30}{ \one{\x}{4} }
  \foreach \x in {15,...,30}{ \one{\x}{3} }
  \foreach \x in {13,...,30}{ \one{\x}{2} }
  \foreach \x in {12,...,30}{ \one{\x}{1} }
\end{tikzpicture}
\\
\end{tabular}
\caption{
These diagrams illustrate for which pairs $(n,d)$ there are non-trivial syzygies in degrees less than or equal to the Fr\"oberg, where the number of general forms is $n+2$, $n+5$ and $n+10$, respectively. A gray circle in position $(n,d)$ means that we know that such syzygies exists while a white circle means that they don't necesarily exist. Thus the gray circles correspond to counterexamples to the Iarrobino-Fr\"oberg conjecture. The top left corner correspond to $(n,d)=(1,1)$, $d$ increases to the right and $n$ increases downwards. Chandler's results show that rows $5-7$ in the second case are eventually all gray. Our result shows that all columns except the first are eventually all gray. \vspace{1cm}
}
\label{fig:iarrobino3}
\end{figure}

\begin{figure}
    \centering

\begin{tikzpicture}[scale=0.20]
 \draw (0,0) -- (30,0) -- (30,-30) -- (0,-30) -- (0,0);
 \draw (0,0) -- (0,1) node[above] {$n=1$};
 \draw (30,0) -- (30,1) node[above] {$n=100$};
 \draw (15,0) -- (15,1) node[above] {$n=50$};
 \draw (0,0) -- (-1,0) node[left]{$0$};
 \draw (0,-5) -- (-1,-5) node[left]{$-1$}; 
 \draw (0,-10) -- (-1,-10) node[left]{$-2$}; 
 \draw (0,-15) -- (-1,-15) node[left]{$-3$}; 
 \draw (0,-20) -- (-1,-20) node[left]{$-4$}; 
 \draw (0,-25) -- (-1,-25) node[left]{$-5$}; 
 \draw (0,-30) -- (-1,-30) node[left]{$-6$};
 \draw (30,-5) -- (31,-5) node[right]{$\xi^{(2)}_1$};
 \draw (30,-14.2848) -- (31,-14.2848) node[right]{$\xi^{(5)}_1$};
 \draw (30,-24.29731) -- (31,-24.29731) node[right]{$\xi^{(10)}_1$};

 \draw[color=blue] (0,-6.12372) --(.3,-7.07107) --(.6,-6.32456) --(.9,-5.7735) --(1.2,-6.68153) --(1.5,-6.25) --(1.8,-5.89256) --(2.1,-5.59017) --(2.4,-5.33002) --(2.7,-6.12372) --(3,-5.88348) --(3.3,-5.66947) --(3.6,-5.47723) --(3.9,-5.3033) --(4.2,-5.14496) --(4.5,-5.83333) --(4.8,-5.67775) --(5.1,-5.53399) --(5.4,-5.40062) --(5.7,-5.27645) --(6,-5.16047) --(6.3,-5.7735) --(6.6,-5.65685) --(6.9,-5.547) --(7.2,-5.44331) --(7.5,-5.34522) --(7.8,-5.25226) --(8.1,-5.16398) --(8.4,-5.08001) --(8.7,-5.625) --(9,-5.53912) --(9.3,-5.45705) --(9.6,-5.37853) --(9.9,-5.3033) --(10.2,-5.23114) --(10.5,-5.16185) --(10.8,-5.09525) --(11.1,-5.59017) --(11.4,-5.52158) --(11.7,-5.45545) --(12,-5.39164) --(12.3,-5.33002) --(12.6,-5.27046) --(12.9,-5.21286) --(13.2,-5.15711) --(13.5,-5.1031) --(13.8,-5.05076) --(14.1,-5.5) --(14.4,-5.44581) --(14.7,-5.39319) --(15,-5.34207) --(15.3,-5.29238) --(15.6,-5.24404) --(15.9,-5.19701) --(16.2,-5.15122) --(16.5,-5.10662) --(16.8,-5.06316) --(17.1,-5.47723) --(17.4,-5.43214) --(17.7,-5.38816) --(18,-5.34522) --(18.3,-5.3033) --(18.6,-5.26235) --(18.9,-5.22233) --(19.2,-5.18321) --(19.5,-5.14496) --(19.8,-5.10754) --(20.1,-5.07093) --(20.4,-5.03509) --(20.7,-5.41667) --(21,-5.37944) --(21.3,-5.34297) --(21.6,-5.30723) --(21.9,-5.2722) --(22.2,-5.23785) --(22.5,-5.20416) --(22.8,-5.17112) --(23.1,-5.1387) --(23.4,-5.10688) --(23.7,-5.07565) --(24,-5.04498) --(24.3,-5.40062) --(24.6,-5.36875) --(24.9,-5.33745) --(25.2,-5.30669) --(25.5,-5.27645) --(25.8,-5.24672) --(26.1,-5.21749) --(26.4,-5.18875) --(26.7,-5.16047) --(27,-5.13265) --(27.3,-5.10527) --(27.6,-5.07833) --(27.9,-5.05181) --(28.2,-5.02571) --(28.5,-5.35714) --(28.8,-5.33002) --(29.1,-5.3033) --(29.4,-5.27698) --(29.7,-5.25105) ;
\draw[color=blue] (0,-15.1554) --(.3,-16.7038) --(.6,-15.625) --(.9,-15.9099) --(1.2,-15.0935) --(1.5,-15.4571) --(1.8,-15.8196) --(2.1,-15.199) --(2.4,-15.591) --(2.7,-15.0624) --(3,-15.468) --(3.3,-15.0061) --(3.6,-15.4167) --(3.9,-15.0055) --(4.2,-15.4161) --(4.5,-15.0446) --(4.8,-14.6987) --(5.1,-15.1128) --(5.4,-14.7946) --(5.7,-15.2028) --(6,-14.9076) --(6.3,-14.6289) --(6.6,-15.0334) --(6.9,-14.772) --(7.2,-14.5237) --(7.5,-14.9225) --(7.8,-14.6875) --(8.1,-15.0787) --(8.4,-14.8553) --(8.7,-14.6416) --(9,-15.026) --(9.3,-14.8216) --(9.6,-14.6253) --(9.9,-15.0027) --(10.2,-14.814) --(10.5,-14.6322) --(10.8,-14.4569) --(11.1,-14.827) --(11.4,-14.6575) --(11.7,-14.4938) --(12,-14.8567) --(12.3,-14.6978) --(12.6,-14.5438) --(12.9,-14.8998) --(13.2,-14.75) --(13.5,-14.6047) --(13.8,-14.4636) --(14.1,-14.8121) --(14.4,-14.6743) --(14.7,-14.5403) --(15,-14.4099) --(15.3,-14.7512) --(15.6,-14.6235) --(15.9,-14.4991) --(16.2,-14.8342) --(16.5,-14.7121) --(16.8,-14.5929) --(17.1,-14.4767) --(17.4,-14.805) --(17.7,-14.6907) --(18,-14.579) --(18.3,-14.4698) --(18.6,-14.7918) --(18.9,-14.6842) --(19.2,-14.5789) --(19.5,-14.4759) --(19.8,-14.7917) --(20.1,-14.69) --(20.4,-14.5904) --(20.7,-14.4928) --(21,-14.3972) --(21.3,-14.7063) --(21.6,-14.6117) --(21.9,-14.5189) --(22.2,-14.4279) --(22.5,-14.7314) --(22.8,-14.6413) --(23.1,-14.5528) --(23.4,-14.4659) --(23.7,-14.3806) --(24,-14.678) --(24.3,-14.5934) --(24.6,-14.5102) --(24.9,-14.4285) --(25.2,-14.7208) --(25.5,-14.6397) --(25.8,-14.5599) --(26.1,-14.4814) --(26.4,-14.4042) --(26.7,-14.6909) --(27,-14.6142) --(27.3,-14.5387) --(27.6,-14.4643) --(27.9,-14.391) --(28.2,-14.6725) --(28.5,-14.5996) --(28.8,-14.5279) --(29.1,-14.4572) --(29.4,-14.3875) --(29.7,-14.6639) ;
\draw[color=blue] (0,-24.5181) --(.3,-25.5155) --(.6,-25.4951) --(.9,-25.5126) --(1.2,-25.5604) --(1.5,-25.6326) --(1.8,-24.8673) --(2.1,-25) --(2.4,-25.1443) --(2.7,-25.2982) --(3,-24.6885) --(3.3,-24.8747) --(3.6,-25.0651) --(3.9,-25.2591) --(4.2,-24.7487) --(4.5,-24.9615) --(4.8,-25.1753) --(5.1,-24.7217) --(5.4,-24.9482) --(5.7,-25.1744) --(6,-24.765) --(6.3,-25) --(6.6,-24.6183) --(6.9,-24.8599) --(7.2,-24.5022) --(7.5,-24.7487) --(7.8,-24.9932) --(8.1,-24.6622) --(8.4,-24.9101) --(8.7,-24.5967) --(9,-24.8471) --(9.3,-24.5495) --(9.6,-24.8015) --(9.9,-24.5181) --(10.2,-24.7712) --(10.5,-24.5004) --(10.8,-24.7541) --(11.1,-24.4949) --(11.4,-24.7487) --(11.7,-24.5) --(12,-24.7537) --(12.3,-24.5145) --(12.6,-24.7678) --(12.9,-24.5374) --(13.2,-24.79) --(13.5,-24.5677) --(13.8,-24.8195) --(14.1,-24.6046) --(14.4,-24.8555) --(14.7,-24.6475) --(15,-24.4447) --(15.3,-24.6957) --(15.6,-24.4989) --(15.9,-24.7487) --(16.2,-24.5576) --(16.5,-24.8061) --(16.8,-24.6203) --(17.1,-24.4385) --(17.4,-24.6864) --(17.7,-24.5095) --(18,-24.7559) --(18.3,-24.5833) --(18.6,-24.4144) --(18.9,-24.6598) --(19.2,-24.4949) --(19.5,-24.7388) --(19.8,-24.5776) --(20.1,-24.4195) --(20.4,-24.6623) --(20.7,-24.5077) --(21,-24.7487) --(21.3,-24.5974) --(21.6,-24.4487) --(21.9,-24.6885) --(22.2,-24.5429) --(22.5,-24.3998) --(22.8,-24.6382) --(23.1,-24.4978) --(23.4,-24.7345) --(23.7,-24.5967) --(24,-24.4612) --(24.3,-24.6965) --(24.6,-24.5634) --(24.9,-24.4324) --(25.2,-24.6662) --(25.5,-24.5374) --(25.8,-24.4106) --(26.1,-24.6429) --(26.4,-24.5181) --(26.7,-24.3952) --(27,-24.6259) --(27.3,-24.5049) --(27.6,-24.3857) --(27.9,-24.6148) --(28.2,-24.4973) --(28.5,-24.3815) --(28.8,-24.6091) --(29.1,-24.4949) --(29.4,-24.3823) --(29.7,-24.6083) ;
\draw[color=red] (0,-6.12372) --(.3,-7.07107) --(.6,-7.90569) --(.9,-7.21688) --(1.2,-8.01784) --(1.5,-8.75) --(1.8,-8.24958) --(2.1,-8.94427) --(2.4,-9.59403) --(2.7,-9.18559) --(3,-9.80581) --(3.3,-10.394) --(3.6,-10.0416) --(3.9,-10.6066) --(4.2,-11.1474) --(4.5,-10.8333) --(4.8,-11.3555) --(5.1,-11.8585) --(5.4,-11.5728) --(5.7,-12.0605) --(6,-12.5326) --(6.3,-12.2687) --(6.6,-12.7279) --(6.9,-13.1741) --(7.2,-12.9279) --(7.5,-13.3631) --(7.8,-13.7872) --(8.1,-13.5554) --(8.4,-13.97) --(8.7,-14.375) --(9,-14.1555) --(9.3,-14.5521) --(9.6,-14.9404) --(9.9,-14.7314) --(10.2,-15.1122) --(10.5,-15.4856) --(10.8,-15.2857) --(11.1,-15.6525) --(11.4,-16.0126) --(11.7,-15.8208) --(12,-16.1749) --(12.3,-16.5231) --(12.6,-16.3384) --(12.9,-16.6812) --(13.2,-17.0185) --(13.5,-16.8402) --(13.8,-17.1726) --(14.1,-17.5) --(14.4,-17.3276) --(14.7,-17.6505) --(15,-17.9688) --(15.3,-17.8016) --(15.6,-18.1158) --(15.9,-18.4258) --(16.2,-18.2634) --(16.5,-18.5695) --(16.8,-18.8718) --(17.1,-18.7139) --(17.4,-19.0125) --(17.7,-19.3076) --(18,-19.1537) --(18.3,-19.4454) --(18.6,-19.7338) --(18.9,-19.5837) --(19.2,-19.869) --(19.5,-20.1511) --(19.8,-20.0045) --(20.1,-20.2837) --(20.4,-20.5599) --(20.7,-20.4167) --(21,-20.6901) --(21.3,-20.9609) --(21.6,-20.8207) --(21.9,-21.0888) --(22.2,-21.3543) --(22.5,-21.217) --(22.8,-21.48) --(23.1,-21.7407) --(23.4,-21.606) --(23.7,-21.8643) --(24,-22.1203) --(24.3,-21.9882) --(24.6,-22.242) --(24.9,-22.4935) --(25.2,-22.3639) --(25.5,-22.6134) --(25.8,-22.8607) --(26.1,-22.7334) --(26.4,-22.9787) --(26.7,-23.2221) --(27,-23.0969) --(27.3,-23.3384) --(27.6,-23.578) --(27.9,-23.4549) --(28.2,-23.6926) --(28.5,-23.9286) --(28.8,-23.8074) --(29.1,-24.0416) --(29.4,-24.2741) --(29.7,-24.1548) ;
\draw[color=red] (0,-15.1554) --(.3,-15.3675) --(.6,-15.625) --(.9,-14.7314) --(1.2,-15.0935) --(1.5,-15.4571) --(1.8,-14.799) --(2.1,-15.199) --(2.4,-15.591) --(2.7,-15.0624) --(3,-15.468) --(3.3,-15.8636) --(3.6,-15.4167) --(3.9,-15.8166) --(4.2,-16.2067) --(4.5,-15.8161) --(4.8,-16.2062) --(5.1,-16.5872) --(5.4,-16.238) --(5.7,-16.617) --(6,-16.9877) --(6.3,-16.6701) --(6.6,-17.0379) --(6.9,-17.3981) --(7.2,-17.1057) --(7.5,-17.4625) --(7.8,-17.8125) --(8.1,-17.5405) --(8.4,-17.887) --(8.7,-18.2272) --(9,-17.9723) --(9.3,-18.309) --(9.6,-18.64) --(9.9,-18.3995) --(10.2,-18.7271) --(10.5,-19.0494) --(10.8,-18.8213) --(11.1,-19.1403) --(11.4,-19.4546) --(11.7,-19.2372) --(12,-19.5482) --(12.3,-19.8549) --(12.6,-19.6469) --(12.9,-19.9505) --(13.2,-20.25) --(13.5,-20.0505) --(13.8,-20.347) --(14.1,-20.6398) --(14.4,-20.4478) --(14.7,-20.7378) --(15,-21.0243) --(15.3,-20.839) --(15.6,-21.1228) --(15.9,-21.4034) --(16.2,-21.2242) --(16.5,-21.5022) --(16.8,-21.7771) --(17.1,-21.6036) --(17.4,-21.8761) --(17.7,-22.1457) --(18,-21.9773) --(18.3,-22.2446) --(18.6,-22.5092) --(18.9,-22.3455) --(19.2,-22.6079) --(19.5,-22.8677) --(19.8,-22.7083) --(20.1,-22.9661) --(20.4,-23.2214) --(20.7,-23.066) --(21,-23.3193) --(21.3,-23.5703) --(21.6,-23.4187) --(21.9,-23.6678) --(22.2,-23.9147) --(22.5,-23.7666) --(22.8,-24.0117) --(23.1,-24.2547) --(23.4,-24.1099) --(23.7,-24.3511) --(24,-24.5904) --(24.3,-24.4487) --(24.6,-24.6862) --(24.9,-24.9219) --(25.2,-24.7831) --(25.5,-25.0172) --(25.8,-25.2494) --(26.1,-25.1133) --(26.4,-25.344) --(26.7,-25.573) --(27,-25.4395) --(27.3,-25.667) --(27.6,-25.8929) --(27.9,-25.7618) --(28.2,-25.9862) --(28.5,-26.209) --(28.8,-26.0802) --(29.1,-26.3017) --(29.4,-26.5216) --(29.7,-26.395) ;
\draw[color=red] (0,-24.5181) --(.3,-24.4949) --(.6,-24.5145) --(.9,-23.6228) --(1.2,-23.7346) --(1.5,-23.8649) --(1.8,-23.1523) --(2.1,-23.3333) --(2.4,-23.5221) --(2.7,-22.9265) --(3,-23.1455) --(3.3,-23.3671) --(3.6,-22.8535) --(3.9,-23.094) --(4.2,-23.3345) --(4.5,-22.8814) --(4.8,-23.1341) --(5.1,-23.3854) --(5.4,-22.9786) --(5.7,-23.2379) --(6,-23.495) --(6.3,-23.125) --(6.6,-23.3874) --(6.9,-23.6472) --(7.2,-23.307) --(7.5,-23.5702) --(7.8,-23.8308) --(8.1,-23.5151) --(8.4,-23.7778) --(8.7,-24.0377) --(9,-23.7428) --(9.3,-24.004) --(9.6,-24.2624) --(9.9,-23.9851) --(10.2,-24.2441) --(10.5,-24.5004) --(10.8,-24.2384) --(11.1,-24.4949) --(11.4,-24.7487) --(11.7,-24.5) --(12,-24.7537) --(12.3,-25.0048) --(12.6,-24.7678) --(12.9,-25.0185) --(13.2,-25.2668) --(13.5,-25.0401) --(13.8,-25.2878) --(14.1,-25.5331) --(14.4,-25.3158) --(14.7,-25.5604) --(15,-25.8027) --(15.3,-25.5938) --(15.6,-25.8353) --(15.9,-26.0746) --(16.2,-25.8732) --(16.5,-26.1116) --(16.8,-26.348) --(17.1,-26.1535) --(17.4,-26.389) --(17.7,-26.6224) --(18,-26.4342) --(18.3,-26.6667) --(18.6,-26.8972) --(18.9,-26.7148) --(19.2,-26.9444) --(19.5,-27.1721) --(19.8,-26.9951) --(20.1,-27.2218) --(20.4,-27.4467) --(20.7,-27.2746) --(21,-27.4986) --(21.3,-27.7208) --(21.6,-27.5533) --(21.9,-27.7746) --(22.2,-27.9942) --(22.5,-27.831) --(22.8,-28.0496) --(23.1,-28.2667) --(23.4,-28.1074) --(23.7,-28.3235) --(24,-28.5381) --(24.3,-28.3826) --(24.6,-28.5962) --(24.9,-28.8083) --(25.2,-28.6563) --(25.5,-28.8675) --(25.8,-29.0773) --(26.1,-28.9286) --(26.4,-29.1374) --(26.7,-29.3449) --(27,-29.1993) --(27.3,-29.4059) --(27.6,-29.6112) --(27.9,-29.4684) --(28.2,-29.6728) --(28.5,-29.8759) --(28.8,-29.736) --(29.1,-29.9382) --(29.4,-30.1392) --(29.7,-30.0019) ;
\end{tikzpicture}

    \caption{We illustrate the difference between the two sides of the inequality in Equation~\ref{eq:limit} for $d=5$ and $k=2$, $k=5$ and $k=10$. The left hand side is colored red and the right hand side is colored blue.}
    \label{fig:graf}
\end{figure}

\vspace{1cm}

\section{Two generic quadratic forms in the exterior algebra}

Let $E_n$ denote the exterior algebra on $n$ generators and let $f$ and $g$ be two generic quadratic forms in $E_n$.  Let $c(n, s)$ denote the number of lattice paths inside the rectangle $(n+2-2s) \times (n+2)$ from the bottom left corner to the top right corner with moves of two types: $(x,y) \mapsto (x+1,y+1)$ or $(x-1,y+1)$. 

In \cite{CLN19} two conjectures are given which relates the cofficients $c(n,s)$ to the dimensions of the graded pieces of $E_n/(f,g)$, and $R_{n,n+2,2}$.

\begin{conjecture} \label{conj:exterior}
    
The Hilbert series of $E_n/(f,g)$ is equal to $1+c(n,1)t+c(n,2)t^2 + \cdots + c(n, \lfloor \frac{n}{2} \rfloor) t^{\lfloor \frac{n}{2} \rfloor}$.

\end{conjecture}
\begin{conjecture} \label{conj:squares}
The Hilbert series of $R_{n,n+2,2}$ is equal to $1+c(n,1)t+c(n,2)t^2 + \cdots + c(n, \lfloor \frac{n}{2} \rfloor) t^{\lfloor \frac{n}{2} \rfloor}$.
\end{conjecture}
It is shown, see \cite[Theorem 1]{CLN19}, that if Conjecture \ref{conj:exterior} is true for all $n$,  then so is Conjecture \ref{conj:squares}, and for the opposite direction, if Conjecture \ref{conj:squares} is true for all $n$, then  
Conjecture \ref{conj:exterior} is true for even $n$.

In this section we prove Conjecture \ref{conj:squares}. Our argument rely on two technical lemmas. The first of these two lemmas is illustrated by Figure \ref{fig:d2lemma1}.

\begin{figure}[ht]
\centering
\begin{tikzpicture}[scale=0.6]
  \draw[step=1cm,gray!30,very thin] (-2.5,-0.5) grid (10.5,12.5);
  \draw[very thick] (-2.5,0) -- (10.5,0);      
  \draw[very thick] (0,-0.5) -- (0,12.5);      

  \draw[very thick,blue!60] (2,-0.5) -- (2,12.5);

  \draw[very thick,blue!60,dashed] (10,-0.5) -- (10,12.5);

  \draw[very thick,red!70] (5,-0.5) -- (5,12.5);

  \draw[very thick,red!70,dashed] (7,-0.5) -- (7,12.5);

  \fill (4,0) circle (2pt); 
  \fill (0,12) circle (2pt); 

  \draw[very thick,green!60!black]
    (4,0) -- (5,1) -- (4,2) -- (3,3) -- (2,4) -- (1,5)
    -- (0,6) -- (-1,7) -- (0,8) -- (1,9)
    -- (0,10) -- (-1,11) -- (0,12);

  \draw[very thick,red!70,dotted]
    (4,0) -- (3,1) -- (2,2)
    -- (3,3) -- (4,4) -- (5,5) -- (6,6)
    -- (5,7) -- (4,8) -- (3,9) -- (2,10) -- (1,11) -- (0,12);

  \fill (8,0) circle (2pt); 
  \draw[very thick,red!80!black,dash pattern=on 5pt off 3pt]
    (8,0) -- (9,1) -- (10,2) -- (9,3) -- (8,4) -- (7,5) -- (6,6)
    -- (5,7) -- (4,8) -- (3,9) -- (2,10) -- (1,11) -- (0,12);

\end{tikzpicture}
\caption{Illustration of the construction in Lemma \ref{lemma:rec} with $t=4, n= 12, i=2,j=5$.
The thick green path is valid and consists of $(n-t)/2 =4$ moves to
the right.
The dotted red path $p$ is invalid because it crosses the line $x=j$
before the line $x=i$. 
The dashed red path is the reflected path $p'$, obtained by reflecting the initial segment of $p$ in the line $x=j+1=6$. 
The dashed blue line at $x=10$ and the dashed red line at $x=7$ show the images of the blue line at $x=2$ and the red line at $x=5$ under this reflection.}
 \label{fig:d2lemma1}
\end{figure}

\begin{lemma} \label{lemma:rec}
Let $n,t,i,j$ be positive integers such that $n$ and $t$ have the same parity, and such that $i<t<j$.
Let $e(n,t,i,j)$ denote the number of lattice paths from the point $(t,0)$ to the point $(0,n)$ with moves of two types: 
$(x,y) \mapsto (x+1,y+1)$ or $(x-1,y+1)$, and with the restriction that the line $x=j$ must not be crossed before the line $x=i$ has been crossed. Then 
\[e(n,t,i,j) = \sum_{k \geq 0} \left( \binom{n}{(n-t)/2 - k (j-i+2)} - \binom{n}{(n+t)/2 - j - 1 - k(j-i+2)} \right).\]
\end{lemma}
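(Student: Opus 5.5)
The plan is to count by André's reflection principle, iterated for two barriers (inclusion--exclusion over alternating barrier visits). First we make the conventions precise. For a path moving with steps $(x,y)\mapsto(x\pm1,y+1)$, crossing the line $x=j$ means visiting $x=b:=j+1$, and crossing $x=i$ means visiting $x=a:=i-1$; this matches the reflection in $x=j+1$ shown in Figure~\ref{fig:d2lemma1}. Since $i<t<j$, the starting point satisfies $a<t<b$, and the strip has width $b-a=j-i+2$. Let $N(u)$ denote the number of unrestricted paths from $(u,0)$ to $(0,n)$. Such a path has $r$ right moves with $r-(n-r)=-u$, so $N(u)=\binom{n}{(n-u)/2}$. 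In particular the $k=0$ positive term is $N(t)$, the total number of paths. Hence $e(n,t,i,j)=N(t)-\#\{\text{paths visiting } b \text{ before } a\}$.

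Next, for a word $w=w_1w_2\cdots w_m$ in the letters $a,b$ with no two consecutive letters equal, let $M(w)$ be the number of paths that visit $w_1$, then later $w_2$, and so on. The standard inclusion--exclusion (as in the ballot problem with two barriers) gives
\[
\#\{\text{visit } b \text{ before } a\}=M(b)-M(ab)+M(bab)-M(abab)+\cdots .
\]
To justify this, we check the identity path by path. Let the path's sequence of alternating barrier visits be $s_1s_2\cdots s_p$ (consecutive repeats collapsed). The alternating sum of the indicators $[w\ \text{is a subsequence of}\ s]$, over the words $w$ appearing in the sum, telescopes to $1$ if $s_1=b$ and to $0$ otherwise. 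The sum is finite because $M(w)=0$ once $|w|(b-a)$ exceeds $n$.

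Each $M(w)$ is then computed by successive reflections: reflect the initial segment up to the first visit of $w_1$ in the line $x=w_1$, then reflect the (already reflected) segment up to the visit of $w_2$ in the image of $x=w_2$, and so on. This produces a bijection with unrestricted paths from a transformed start point $\rho_w(t)$. Reflections in $x=a$ and $x=b$ compose to a translation by $\pm2(b-a)$. So for words of even length $2k$ (necessarily $(ab)^k$ in the sum) we get $\rho(t)=t+2k(b-a)$, giving
\[
N\bigl(t+2k(j-i+2)\bigr)=\binom{n}{(n-t)/2-k(j-i+2)},
\]
which enters with sign $+$ inside $e$. For odd words $(ba)^kb$ we get $\rho(t)=2b-t+2k(b-a)$, giving $\binom{n}{(n+t)/2-j-1-k(j-i+2)}$ with sign $-$. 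Summing these contributions yields exactly the stated formula. As a sanity check, the $k=0$ subtracted term $N(2j+2-t)$ is precisely the single reflection of the figure.

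The main obstacle is the bookkeeping of the composed reflections. We must verify that the direction of each translation pushes the binomial index down (rather than up) by $j-i+2$ per step, that the visit to the next barrier in the original path corresponds to a visit to the correct reflected line so that each step is a genuine bijection, and that the alternating signs line up with the even/odd split above. Both the direction and the signs are easy to get wrong, so we would verify them on the small case of Figure~\ref{fig:d2lemma1} ($t=4$, $n=12$, $i=2$, $j=5$). Finally, we note that all sums are finite by the convention that binomial coefficients with negative lower index vanish.
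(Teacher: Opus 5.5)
Your argument is correct in outline, but it takes a different route from the paper. You use the classical two-barrier André argument: an inclusion--exclusion $\#\{b\text{ before }a\}=M(b)-M(ab)+M(bab)-\cdots$ over alternating visit words, followed by an iterated-reflection lemma $M(w)=N(\rho_w(t))$ for each word. Your telescoping check of the identity is right.

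The paper uses neither ingredient. It reflects the initial segment of a forbidden path in $x=j+1$ only once, at its first crossing of $x=j$. The reflected path is again a path of the same kind for the reflected strip, so $e(n,t,i,j)=\binom{n}{(n-t)/2}-e(n,2(j+1)-t,j+2,2(j+1)-i)$. With $d=j-i+2$, applying this twice translates $t,i,j$ by $2d$, giving $e(n,t,i,j)=\binom{n}{(n-t)/2}-\binom{n}{(n+t)/2-j-1}+e(n,t+2d,i+2d,j+2d)$. Unrolling this, with $e=0$ once the start exceeds $n$, produces the same binomials as your sum, term by term.

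The recursion buys simplicity: every bijection involves a single reflection, and the self-similarity of the problem handles the bookkeeping you flag as your main obstacle. Your version buys a direct combinatorial meaning for each binomial, namely the paths realizing a prescribed alternating pattern of barrier visits. It is also the form that generalizes most easily to other endpoints.

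The step you leave open is exactly the iterated-reflection lemma. Your description of it, reflecting ``in the image of $x=w_2$'', is ambiguous. Composing the reflections in the opposite order would give $t-2k(b-a)$ for $(ab)^k$ and hence the wrong binomials, and checking the example in the figure does not settle this.

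A clean version goes as follows. Write $R_c(x)=2c-x$. Let $\tau_1$ be the first visit to $w_1$ and $\tau_r$ the first visit to $w_r$ after $\tau_{r-1}$. At stage $r$, reflect the whole current initial segment $[0,\tau_r]$ in the actual line $x=w_r$, not an image of it.
By induction, after stage $r$ the path lies strictly beyond $x=w_r$ (outside the strip) on $[0,\tau_r)$. So $\tau_r$ is recovered as its first visit to $w_r$, and each stage can be undone. The final start is $R_{w_m}\cdots R_{w_1}(t)$, which equals $t+2k(b-a)$ for $(ab)^k$ and $2b-t+2k(b-a)$ for $(ba)^kb$.

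For surjectivity, you need every path from this start to $(0,n)$ to meet $w_m$, then (after undoing) $w_{m-1}$, and so on. A path from a start $>b$ to $x=0<b$ must meet $b$. After undoing a stage, the start lies strictly beyond the previous letter while the path sits at the next letter at time $\tau_r$, so it meets the previous letter first.

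This uses that every word in your sum ends in $b$. The endpoint $x=0$ can lie on or to the left of $x=a=i-1$, so the same bijection would fail for words ending in $a$. None occur, so once this is written out your proof is complete.
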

\begin{proof}
We begin by showing that $e(n,t,i,j)$ satisfies the recursion
\begin{equation} \label{eq:rec}
e(n,t,i,j) = \binom{n}{(n-t)/2} - e(n,2(j+1)-t,j+2,2(j+1)-i).
\end{equation}

 Here we can observe that $n,2(j+1)-t,j+2,2(j+1)-i$ are positive integers, that $n$ and $2j-t$ have the same parity, and that $j+2<2(j+1)-t<2(j+1)-i$. 

A path from point $(t,0)$ to point $(0,n)$ will consist of $(n-t)/2$ right steps, so the number of paths without restriction is $\binom{n}{(n-t)/2}$. The forbidden paths are those that cross the line $x=j$ before the line $x=i$. 

Let $p$ be such a path and let $(j+1,b_p)$ be the position where $p$ has crossed the line $x=j$ for the first time. Let $p'$ be the path 
obtained from $p$ by first reflecting $p$ from $(t,0)$ to $(j+1,b_p)$ along the line $x=j+1$, and from the point $(j+1,b_p)$, letting it coincide with $p$. Then $p'$ is a path from $(2(j+1)-t,0)$ to $(0,n)$. Conversely, a path $p'$ from $(2(j+1)-t,0)$ to $(0,n)$ gives rise to a path $p$ from $(t,0)$ to $(0,n)$, and the latter crosses the line $x=j$ before the line $x=i$ if and only if $p'$ crosses the line $x=j+2$ before possibly crossing the line $x= 2(j+1)-i$. Thus, the number of forbidden paths equals $e(n,2(j+1)-t,j,2(j+1)-i)$, which shows that Equation \ref{eq:rec} holds.

Let $d = j-i+2$. The closed formula now follows by the observation that 
\begin{align*}
e(n,t,i,j) = \binom{n}{(n-t)/2} - e(n,2(j+1)-t,i+d,j+d) = \\
\binom{n}{(n-t)/2} - \binom{n}{(n+t)/2 - j-1} + \\e(n, 2(j+d+1) -( 2(j+1)-t),i+2d,j+2d) = \\
\binom{n}{(n-t)/2} - \binom{n}{(n+t)/2 - j-1} + e(n, t+ 2d,i+2d,j+2d). 
\end{align*}

\end{proof}

Let $c(n,s)$ the number of lattice paths inside the rectangle $(n+2-2s) \times (n+2)$ from the bottom left corner to the top right corner with moves of two types: $(x,y) \mapsto (x+1,y+1)$ or $(x-1,y+1)$.

\begin{lemma} \label{lemma:lattice}
Let $n-2s+2 \geq 1$ so that the rectangle is non-degenarate.
Then the number $c(n,s)$ equals 
\[ \sum_{k = 0}^{\lceil s/3 \rceil }  \left( \binom{n}{s-kd}  - 2\binom{n}{s-2-kd} + \binom{n}{s-4-kd} \right),\]
where $d=n-2s+4.$
\end{lemma}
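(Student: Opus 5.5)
The plan is to reduce $c(n,s)$ to a count of walks in a strip and then apply the reflection principle with two absorbing walls (the method of images). Lemma \ref{lemma:rec} is the one-wall-at-a-time version of the same idea.

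Put $w=n+2-2s$ and use coordinates with the rectangle equal to $[0,w]\times[0,n+2]$. A path is then a walk of $n+2$ diagonal steps from $(0,0)$ to $(w,n+2)$ with $0\le x\le w$ throughout.

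\emph{Step 1: strip off the forced steps.} If $w\ge 1$, the first step must go from $x=0$ to $x=1$. Likewise the last step must arrive at $x=w$ from $x=w-1$, since arriving from $x=w+1$ would leave the rectangle. So $c(n,s)$ equals the number of walks of $n$ steps from $x=1$ to $x=w-1$ that stay in $0\le x\le w$, i.e.\ that never touch the walls $x=-1$ and $x=w+1$.
\begin{itemize}
\item The net displacement of such a walk is $w-2=n-2s$.
\item Hence each walk has exactly $s$ left steps, and the unrestricted count is $\binom{n}{s}$.
\end{itemize}
The degenerate case $w=1$ (where the first and last forced steps interact) I would check separately by hand.

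\emph{Step 2: method of images.} The two walls $x=-1$ and $x=w+1$ are at distance $w+2=d$ apart. The group generated by the two reflections is therefore generated by translation by $2d$ together with one reflection. The standard formula gives the restricted count as
\[\sum_{k\in\mathbb Z}\Bigl(N(1\to w-1+2kd)-N(1\to -(w-1)-2+2kd)\Bigr),\]
where $N(a\to b)$ is the number of unrestricted $n$-step walks from $x=a$ to $x=b$. The reflected endpoint $-(w-1)-2$ is the image of $w-1$ in the wall $x=-1$.

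Converting displacements to numbers of left steps, via $L=(n-\text{displacement})/2$:
\begin{itemize}
\item the direct terms become $\binom{n}{s-kd}$;
\item the reflected terms become $\binom{n}{s-2-kd}$ after reindexing (equivalently, $\binom{n}{n-s+2-kd}$).
\end{itemize}

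\emph{Step 3: regroup into the stated shape.} Use $\binom{n}{m}=\binom{n}{n-m}$ and the identity $n-s-d=s-4$.
\begin{itemize}
\item The direct term with shift $+kd$ (for $k\ge1$), namely $\binom{n}{s+kd}$, becomes $\binom{n}{s-4-(k-1)d}$.
\item The two families of reflected terms are the images in the left wall and in the right wall. After the same symmetry they both become $\binom{n}{s-2-kd}$ with $k\ge0$, which produces the coefficient $-2$.
\end{itemize}
Finally, all terms with a negative lower index vanish, which truncates the sum. Since $d\ge 3$ whenever $w\ge1$, $s-kd<0$ once $k>s/3$, so the range $0\le k\le\lceil s/3\rceil$ suffices.

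I expect this index bookkeeping to be the main obstacle. One must check that every image term is counted exactly once after folding, with no double counting at the boundary value of $k$. One must also check that the parity and vanishing conventions for binomials match the paper's convention $\binom{n}{-i}=0$.

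As a cross-check, I would also derive the formula from Lemma \ref{lemma:rec}. Paths avoiding both walls are all paths, minus those hitting the left wall first, minus those hitting the right wall first. Each of these terms is an $e(\cdot)$-type quantity, so Lemma \ref{lemma:rec} applied with the appropriate $t,i,j$ gives the same alternating sum.
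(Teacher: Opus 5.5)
Your argument is correct, and it reaches the formula by a different route than the paper. Both start from the same reduction to $n$-step walks from $x=1$ to $x=w-1$; the paper likewise passes to paths from $(1,1)$ to $(n+1-2s,n+1)$. You then apply the two-wall method of images once, getting $\sum_{k\in\mathbb Z}\binom{n}{s-kd}-\sum_{k\in\mathbb Z}\binom{n}{s-2-kd}$, and fold the negative indices using $n-s-d=s-4$ and $n-s+2-d=s-2$. Two of your worries are unfounded. Splitting $\mathbb Z$ into $k\ge 0$ and $k\le -1$ gives disjoint families, so the double counting you fear does not arise. The case $w=1$ needs no separate treatment either, since the walls $x=-1$ and $x=w+1$ still strictly enclose both endpoints.

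The paper instead starts from the inclusion--exclusion $c(n,s)=\binom{n}{s}-2\binom{n}{s-2}+u$ quoted from \cite{CLN19}, where $u$ counts paths touching both walls. It splits $u$ according to which wall is touched first and reflects each piece into a one-sided first-passage count $e(n,t,i,j)$. These are evaluated with the closed form of Lemma~\ref{lemma:rec}, which is itself obtained by iterating a single reflection.

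Your version is shorter and independent of \cite{CLN19}, and it explains the shape of the answer. The $\binom{n}{s-4-kd}$ terms are the folded negative translates of the direct image. The coefficient $-2$ arises because the left-wall and right-wall image families coincide after the symmetry $\binom{n}{m}=\binom{n}{n-m}$. The price is that you use the two-wall reflection formula as a black box, so you should cite it (e.g.\ Feller or Mohanty) or prove it. Its proof is precisely the iterated reflection and inclusion--exclusion over alternating wall hits that the paper carries out by hand through Lemma~\ref{lemma:rec}.
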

\begin{proof}

The argument in the beginning of the proof of \cite[Proposition 7]{CLN19} gives that the number $c(n,s)$ equals 
\[\binom{n}{s} - 2\binom{n}{s-2} + u,\] where $u$ is the number of paths that cross both the line 
$x=0$ and $x=n+2-2s$.

Consider first the number of paths from $(1,1)$ to $(n+1-2s,n+1)$ that cross the line $x=n+2-2s$ before the line $x=0$. Let $(n+3-2s,b_p)$ be the first point on $p$ to the right of the line $x=n+2-2s$ and let $(-1,a_p)$ be the first point on $p$ to the left of the line $x=0$. We construct a new path $p'$ by reflecting the path from $(n+3-2s,b_p)$ to $(-1,a_p)$ along the line $x=0$, while for points before $(n+3-2s,b_p)$ and after $(-1,a_p)$, letting it coincide with $p$. Now $p'$ is a path from $(1,1)$ to $(n+1-2s+2(n+4-2s),n+1) = (3n+9-6s,n+1)$. Conversely, a path $p'$ from $(1,1)$ to $(3n+9-6s,n+1)$ gives rise to a path from $(1,1)$ to $(n+1-2s,n+1)$ and the latter crosses the line $x=n+2-2s$ before the line $x=0$ if and only if $p'$ crosses the line $x=n+2-2s$ before possibly crossing the line $x=0$. Thus, we are counting the number of paths from $(1,1)$ to $(3n+9-6s,n+1)$ that do not cross the line $x=0$ before the line $x=n+2-2s$ is crossed, which is the same as the number of paths from $(0,0)$ to $(3n+8-6s,n)$ that do not cross the line $x=-1$ before the line $x=n+1-2s$ is crossed. Reflecting along the line $x=0$ we get that this number is the same as the number of paths from $(0,0)$ to $(6s-3n-8,n)$ that do not cross the line $x=1$ before the line $x=2s-n-1$ is crossed, which in turn is the same as the number of paths from $(8+3n-6s,0)$ to $(0,n)$ that do not cross the line $x=9+3n-6s$ before the line 
$x=2s-n-1+ 8+3n-6s = 2n-4s+7$ is crossed, that is, $e(n,8+3n-6s,2n-4s+7,9+3n-6s).$

For the paths that cross the line  $x=0$ before the line $x=n+2-2s$, we use a similar reflection argument to reduce to counting paths from 
$(1,1)$ to $2(s-4)-n+1,n+1)$ that do not cross the line $x=n+2-2s$ before crossing the line $x=0$, which is the same as the number of paths from $(n-2(s-4),0)$ to $(0,n)$ that do not cross the line $x=2n-4s+9$ before crossing the line $x=n-1-2(s-4)$, that is, $e(n,n-2(s-4),n-1-2(s-4),2n-4s+9)$.

By Lemma \ref{lemma:rec} we have
\begin{align*}
e(n,8+3n-6s,2n-4s+7,9+3n-6s) = \\
\sum_{k \geq 0} \left(  \binom {n}{3s-n-4-k(n-2s+4)} -   \binom {n}{3s-n-6 -k(n-2s+4)}\right) = \\
\sum_{k \geq 1} \left(  \binom {n}{s-k(n-2s+4)} -   \binom {n}{s-2 -k(n-2s+4)}\right)
\end{align*}
and
\begin{align*}
e(n,n-2(s-4),n-1-2(s-4),2n-4s+9) =\\ 
\sum_{k \geq 0} \left(\binom {n}{s-4-k(n-2s+4)} -  \binom {n}{3s-n-6-k(n-2s+4)}\right) = \\
\sum_{k \geq 0} \binom {n}{s-4-k(n-2s+4)} -  \sum_{k \geq 1} \binom {n}{s-2-k(n-2s+4)}.\\
\end{align*}
Writing $d=n-2s+4$, we get 
\begin{align*}
\binom{n}{s} - 2 \binom{n}{s-2} + u =\\
\sum_{k \geq 0} \left( \binom{n}{s-kd}  - 2\binom{n}{s-2-kd} + \binom{n}{s-4-kd} \right) = \\ 
\sum_{k = 0}^{\lceil s/3 \rceil } \left( \binom{n}{s-kd}  - 2\binom{n}{s-2-kd} + \binom{n}{s-4-kd} \right), 
\end{align*}
where the last equality uses the assumption $d \geq 3.$
\end{proof}

We are now ready to prove \cite[Conjecture 2]{CLN19}.

\begin{theorem} \label{thm:d2}

The Hilbert series of $R_{n,n+2,2}$ is equal to 
$1+c(n,1)t+c(n,2)t^2 + \cdots + c(n, \lceil \frac{n}{2} \rceil) t^{\lceil \frac{n}{2} \rceil}$, where $c(n, s)$ is the number of lattice paths inside the rectangle $(n+2-2s) \times (n+2)$ from the bottom left corner to the top right corner with moves of two types: $(x,y) \mapsto (x+1,y+1)$ or $(x-1,y+1)$. 

Moreover, for $s \leq \lceil \frac{n}{2} \rceil$, the number $c(n,s)$ is given by the explicit formula 
\[ \sum_{k = 0}^{\lceil s/3 \rceil }  \left( \binom{n}{s-kd}  - 2\binom{n}{s-2-kd} + \binom{n}{s-4-kd} \right),\]
where $d = n-2s+4.$
\end{theorem}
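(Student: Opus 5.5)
The plan is to match the formula of Theorem~\ref{thm:n+2equi} at $d=2$ term by term with the closed formula for $c(n,s)$ from Lemma~\ref{lemma:lattice}. The lattice side is already in hand: Lemma~\ref{lemma:lattice} gives the explicit expression for $c(n,s)$ whenever the rectangle is non-degenerate, so the second assertion of the theorem holds for those $s$. It then suffices to show $a_s = c(n,s)$ for all $0\le s\le D_{n,n+2,2}$, together with the statement that the top degree is right.

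For the Hilbert series side, I would specialize (\ref{eq:ai}) to $d=2$. The top argument becomes
\[
-2nr+nr+2sr-4r-2k+n+s-1 = n-1+s-2k-r(n-2s+4) = n-1+s-2k-rD,
\]
with $D=n-2s+4$. Hence
\[
a_s=\sum_{r}\sum_{k=0}^{n-1-2r}(-1)^k\binom{n+2}{k}\binom{n-1+s-2k-rD}{n-1}.
\]
For fixed $r$, the inner sum is the coefficient of $t^{s-rD}$ in $(1-t^2)^{n+2}/(1-t)^n=(1+t)^n(1-t^2)^2$, truncated at $k\le n-1-2r$. That coefficient is exactly $\binom{n}{m}-2\binom{n}{m-2}+\binom{n}{m-4}$ with $m=s-rD$. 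The structure therefore matches Lemma~\ref{lemma:lattice} with $r$ playing the role of $k$.

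Two bookkeeping points must be checked. First, the truncation $k\le n-1-2r$ must not drop any nonzero terms. A term with $k\ge n-2r$ needs $n-1+s-2k-rD\ge n-1$, i.e.\ $s-rD\ge 2k\ge 2n-4r$, which is impossible since $s\le D_{n,n+2,2}\approx n/2$. Second, the convention $\binom{N}{n-1}=0$ for negative $N$ must agree with the polynomial-coefficient interpretation. This holds because, for a negative exponent index, the top entry $n-1+m-2k$ is below $n-1$. One also has to reconcile the summation ranges: $r\le\lfloor (n-1)/2\rfloor$ versus $k\le\lceil s/3\rceil$. Both bounds exceed the point where $s-rD<0$ kills every term, so only finitely many terms are nonzero and they coincide.

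Finally, I would compare the top degree. By Theorem~\ref{thm:n+2equi}, $D_{n,n+2,2}=(n+1)/2$ for $n$ odd and $\lfloor n(n+2)/(2(n+1))\rfloor=n/2$ for $n$ even, so $D_{n,n+2,2}=\lceil n/2\rceil$. I then need $c(n,s)$ for $s\le\lceil n/2\rceil$, and the rectangle width $n+2-2s\ge1$ holds there. So Lemma~\ref{lemma:lattice} applies, but its last step assumed $D\ge3$, which can fail when $s=\lceil n/2\rceil$ ($D=3$ or $4$ is fine; $D\ge 3$ holds since $n-2s\ge -1$). I expect the main obstacle to be this edge bookkeeping. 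That means verifying the range conditions at $s=\lceil n/2\rceil$, where the Theorem~\ref{thm:linearsystem} hypothesis $i\le (d-1)(n+1)/2$ is tight. It also means confirming that the degree-$0$ term and the cases $s<2$, where (\ref{eq:emsalemiarrobino}) gives the full polynomial ring, agree with the same formula, which was already handled in the proof of Theorem~\ref{thm:n+2equi}.
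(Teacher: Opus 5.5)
Your proposal is correct and follows the paper's proof essentially step for step. You specialize (\ref{eq:ai}) to $d=2$ to obtain the argument $n-1+s-2k-r(n-2s+4)$, remove the truncations in $k$ and $r$ using $s\le\lceil n/2\rceil$ and $n-2s+4\ge 3$, read each inner sum as the coefficient of $t^{s-r(n-2s+4)}$ in $(1+t)^n(1-t^2)^2$, and match the result with Lemma~\ref{lemma:lattice}. Your worry that the hypothesis $d\ge 3$ of Lemma~\ref{lemma:lattice} might fail is moot: as you yourself observe, $n-2s\ge -1$ holds throughout the range $s\le\lceil n/2\rceil$.
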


\begin{proof}
By Theorem \ref{thm:n+2equi}, we have  $D_{n,n+2,2} = \left\lceil \frac{n}{2} \right \rceil.$
Suppose first that the rectangle is degenerate, that is, $n+2-2s \leq 0 \Leftrightarrow \frac{n+2}{2} \leq s$, so that $D_{n,n+2,2} < s.$ Then $c(n,s) = 0$ is the dimension of the $s$'th graded piece of $R_{n,n+2,2}$, as claimed.

For the rest of the proof, we assume that 
$n+2-2s \geq 1.$ The explicit formula for $c(n,s)$ then follows from Lemma \ref{lemma:lattice}, so it is enough to show that the $s$'th graded component of the series of $R_{n,n+2,2}$ equals $c(n,s)$. By Theorem \ref{thm:n+2equi}, the dimension of the $s$'th graded component of $R_{n,n+2,2}$ is 
\begin{equation} \label{eq:p2}
\sum_{r=0}^{\lfloor (n-1)/2 \rfloor } \sum_{k=0}^{n-1-2r} (-1)^{k} 
\binom{n+2}{k}  \binom{- n r + 2 s r - 4 r - 2 k + s +n - 1}{n-1}.
\end{equation}
We begin by rewriting the upper limits in the summations.
Let $i$ be a positive integer and consider $k = n-1-2t + i$ in 
\[ 
\binom{n+2}{k}  \binom{- n r + 2 s r - 4 r - 2 k + s +n - 1}{n-1}.
\]

We have 
$- n r + 2 s r - 4 r - 2 k + n + s- 1 -(n-1)=-n r + 2r s - 2 n + s - 2 i + 2,$ and the degree of the Hilbert series of  $R_{n,n+2,2}$ is $\lceil n/2 \rceil$ by Theorem \ref{thm:n+2equi}, so from our assumption $s \leq (n+1)/2$ we get
$-n r + 2r s - 2 n + s - 2 i + 2 \leq -\frac{3}{2}n+r-2\,i+\frac{5}{2},$ which is negative when $r \leq \lfloor (n-1)/2 \rfloor.$ Moreover, using $n-2s+4 \geq 3$ we get that $-r(n-2s+4)-2k+s < 0$ when $r >  \lceil s/3 \rceil$, so we can write (\ref{eq:p2}) as 
\[ \sum_{r=0}^{\lceil s/3 \rceil } \sum_{k \geq 0} (-1)^{k} 
\binom{n+2}{k}  \binom{s-r(n-2s+ 4) - 2 k + n  - 1}{n-1}.\]
For fixed $r$, this is the coefficient of $t^{s-r(n-2s+4)}$ in \[\frac{(1-t^2)^{n+2}}{(1-t)^n} = (1-t^2)^2(1+t)^n,\]
and the coefficient of $t^{s-t(n-2s+4)}$ in the latter expression equals
\[ \binom{n}{s-r(n-2s+4)} - 2\binom{n}{s-r(n-2s+4)-2} + \binom{n}{s-r(n-2s+4)-4},\]
which concludes the proof.
\end{proof}
We remark that Cruz and Iarrobino \cite{DCI00} conjectured 
$c(2k,k) = 2^k$ and $c(2k+1,k+1) = 1$ which was proved by Sturmfels and Xu \cite{SX10} using the Verlinde formula, the latter of which we will return to in Section \ref{sec:mixed}. To recover this result from Theorem \ref{thm:d2}, we use the combinatorial interpretation. For $n = 2k$ we count the lattice paths inside the rectangle $2 \times  (2k + 2)$ starting from $(0,0)$ and ending in $(2,n+2)$. We have two choices at the points $(1,1), (1,3), \ldots, (1,2k-1)$, and the remaining moves are forced. This gives in total $2^k$ lattices paths. For $n=2k+1$ we have a $1 \times (2k+3)$ rectangle, and here every move is forced, giving exactly one lattice path.

We also remark that \cite[Conjecture 2]{CLN19} was settled in \cite{CLN19} for $s \leq 
g(n,2) = 
\left \lfloor \frac{n}{3} \right \rfloor+ 1$, and that  
Booth, Singh, and Vraciu \cite{BSV26} recently proved  the case $s = g(n,2)+1=\left \lfloor \frac{n}{3} \right \rfloor  + 2.$

\begin{corollary} \label{cor:exterior}
Let $f$ and $g$ be generic forms in the exterior algebra $E$ on an even number of generators $n$.
Then the Hilbert series of $E/(f,g)$ equals 
$1+c(n,1)t+c(n,2)t^2 + \cdots + c(n, \frac{n}{2} ) t^{ \frac{n}{2} },$
where $c(n,s)$ is the number of lattice paths inside the rectangle $(n+2-2s) \times (n+2)$ from the bottom left corner to the top right corner with moves of two types: $(x,y) \mapsto (x+1,y+1)$ or $(x-1,y+1)$, given by the explicit formula 
\[ \sum_{k = 0}^{\lceil s/3 \rceil }  \left( \binom{n}{s-kd}  - 2\binom{n}{s-2-kd} + \binom{n}{s-4-kd} \right),\]
where $d= n-2s+4$.
\end{corollary}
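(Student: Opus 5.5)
This follows directly from Theorem \ref{thm:d2} combined with the equivalence result of \cite[Theorem 1]{CLN19}. As recalled just after Conjecture \ref{conj:squares}, that theorem says: if Conjecture \ref{conj:squares} holds for all $n$, then Conjecture \ref{conj:exterior} holds for every even $n$. Theorem \ref{thm:d2} establishes Conjecture \ref{conj:squares} for all $n$. The only thing to check here is that the statements match exactly.

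First I will confirm that the Hilbert series in Theorem \ref{thm:d2} agrees with the one in Conjecture \ref{conj:squares}. The theorem has upper index $\lceil n/2\rceil$, while the conjecture has $\lfloor n/2\rfloor$. For odd $n$ the extra term is $c(n,(n+1)/2)$, which counts paths in a $1\times(n+2)$ rectangle. That count is $1$ (compare the Cruz--Iarrobino remark). This is a genuine difference for odd $n$, but it is harmless for us, since the corollary only concerns even $n$, where the two bounds coincide.

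To be safe, I will recheck the direction of the implication in \cite{CLN19}. The passage from squares to the exterior algebra goes through the identification of $E_n/(f,g)$ with a quotient involving $n+2$ squares in a related number of variables. As I recall, the hypothesis needed there is Conjecture \ref{conj:squares} for all $n$, or at least for the relevant odd and even values. Whatever inconsistency exists between $\lfloor\cdot\rfloor$ and $\lceil\cdot\rceil$ in the odd case, I expect it to be already accounted for in \cite{CLN19}. Theorem \ref{thm:d2} gives the exact Hilbert series for every $n$, so whichever version that paper uses is now available.

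The explicit formula for $c(n,s)$ follows from Lemma \ref{lemma:lattice}, or from the second part of Theorem \ref{thm:d2}, for $s\le n/2$. In that range $d=n-2s+4\ge 4$ and the rectangle is non-degenerate. The main, and essentially only, obstacle is making sure the hypotheses of \cite[Theorem 1]{CLN19} are met exactly as stated there. The corollary then follows.
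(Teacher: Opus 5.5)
Your proposal is correct and is the same argument as the paper, whose proof consists only of combining Theorem \ref{thm:d2} with \cite[Theorem 1]{CLN19}. One quibble about your second paragraph: the $\lfloor n/2\rfloor$ versus $\lceil n/2\rceil$ discrepancy is not harmless merely because the corollary concerns even $n$, since the implication in \cite{CLN19} also uses the statement about squares for odd $n$, where the $\lfloor n/2\rfloor$ version is actually false (for example, $R_{1,3,2}$ has series $1+t$); what saves you is, as your third paragraph says, that Theorem \ref{thm:d2} gives the exact series for every $n$.
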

\begin{proof}
The proof of the statement follows by Theorem \ref{thm:d2} and \cite[Theorem 1]{CLN19}. 
\end{proof}

We remark that the interpretation of the Hilbert series for $R_{n,n+2,2}$ as a combinatorial object suggests that there may be combinatorial ways to describe the series of $R_{n,n+2,d}$ for $d \geq 3$ as well.

\section{The Cremona transformations}
A Cremona transform is a birational endomorphism of projective space, and used extensively for studying configurations of fat points, see for instance   \cite{LU06,D09}.

These techniques have been applied also for general linear forms by first using the Emsalem-Iarrobino correspondence,  then applying the Cremona transformations, and finally using the Emsalem-Iarrobino correspondence to go back to powers of general linear forms, see for instance 
\cite{HSS11, MM18, AA18, NT19, MN21}.
 However, this approach comes with redundant restrictions on the exponents of the general linear forms. For this reason, we state and prove the corresponding Cremona transformations directly for linear forms, although we still rely on the Emsalem-Iarrobino correspondence in the proofs.

\subsection{The first Cremona transform for powers of linear forms}

We begin with the following lemma.

\begin{lemma} \label{lemma:cremona1}
Let $\ell_1,\ldots,\ell_r$ be linear forms in $S$ such that $\ell_i=x_i$ for $i = 1,\ldots,n$, and such that $\ell_{i}$ has full support for $i=n+1,\ldots,r$. Let 
$d_1,\ldots,d_r$ be positive integers, let $d$ be a non-negative integer, and 
let $k=d_1+\cdots+d_n-n-2d.$ Suppose that $d_i+k \geq 0$ for $i=n+1,\ldots,r$ and let $F \in S_d.$ 

Then  \begin{equation} \label{eq:cremonaimplication}
\ell_i^{d_i} \circ F = 0 \text{ for } i=1,\ldots,r \,\,\, \Longrightarrow \,\,\, \overline{\ell_i}^{d'_i} \circ G = 0 \text{ for } i=1,\ldots,r, \end{equation}
where 
\[
G=
x_1^{d_1-1} \cdots x_n^{d_n-1} F(x_1^{-1},\ldots,x_n^{-1}),\]
\[
d'_i=\begin{cases}
d_i &\text{ if } 1 \leq i \leq n\\
d_i+k &\text{ if } n+1 \leq i \leq r,
\end{cases}
\]
and \[\overline{a_1 x_1+\cdots+a_n x_n} = a_1^{-1}x_1+ \cdots + a_n^{-1}x_n.\]
In particular, we have \[\dim_{\mathbf{k}} (S/(\ell_1^{d_1},\ldots, \ell_r^{d_r}))_d \leq \dim_{\mathbf{k}} (S/(\overline{\ell_1}^{d'_1},\ldots, \overline{\ell_r}^{d'_r}))_{d+k}.\]
\end{lemma}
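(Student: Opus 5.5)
The plan is to translate each condition $\ell_i^{d_i}\circ F=0$ into a condition that can be checked directly on $G$. The conditions coming from the coordinate forms $x_1,\ldots,x_n$ are handled by looking at exponents. The conditions coming from the full-support forms are handled by reading apolarity as a vanishing-multiplicity condition at a point of the torus, and then using that coordinatewise inversion is a local isomorphism near torus points.

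\emph{Step 1: the coordinate forms.} For $1\le i\le n$, the condition $x_i^{d_i}\circ F=0$ says exactly that every monomial of $F$ has $x_i$-exponent at most $d_i-1$. Hence the rational function $x_1^{d_1-1}\cdots x_n^{d_n-1}F(x_1^{-1},\ldots,x_n^{-1})$ is a polynomial $G$. Each monomial $x^\alpha$ of $F$ goes to $x^{(d_1-1-\alpha_1,\ldots,d_n-1-\alpha_n)}$, so $G$ is homogeneous of degree $\sum(d_i-1)-d=d+k$. Again $G$ has $x_i$-exponents at most $d_i-1$, so $\overline{x_i}^{d_i}\circ G=x_i^{d_i}\circ G=0$. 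The assignment $F\mapsto G$ is a bijection on monomials, so it is an injective linear map on the space of forms satisfying these $n$ conditions.

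\emph{Step 2: apolarity as multiplicity.} Let $H$ be a form of degree $D$, let $\ell=\sum a_jx_j$, and let $a=(a_1,\ldots,a_n)$. For $e\le D$, the form $\ell^e\circ H$ has degree $D-e$. It vanishes if and only if all its partial derivatives of order $D-e$ vanish. Since the actions commute, each such derivative is $\ell^e\circ\partial^\beta H$ with $|\beta|=D-e$, and this equals $e!\,(\partial^\beta H)(a)$ because $\partial^\beta H$ has degree $e$. In characteristic zero, using Euler's formula for the lower-order derivatives, this gives
\[
\ell^e\circ H=0 \iff H \text{ vanishes to order} \ge D-e+1 \text{ at } a.
\]
If $e\le 0$ the condition is vacuous. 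If $e>D$ it is automatic, and correspondingly the order $D-e+1\le 0$ is automatic. This is the Emsalem--Iarrobino correspondence in pointwise form.

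\emph{Step 3: the full-support forms (the main point).} Fix $i>n$ and let $a$ be the coefficient vector of $\ell_i$. By Step 2, $F$ vanishes to order at least $m=d-d_i+1$ at $a$. Since all $a_j\neq0$, the inversion map $\phi(x)=(x_1^{-1},\ldots,x_n^{-1})$ is regular with invertible Jacobian near $\bar a=(a_1^{-1},\ldots,a_n^{-1})$, and $\phi(\bar a)=a$. Passing to the completed local rings, $\phi$ induces an isomorphism that carries the $m$-th power of the maximal ideal at $a$ to that at $\bar a$. Therefore $F\circ\phi$ vanishes to order at least $m$ at $\bar a$. Multiplying by the monomial $x_1^{d_1-1}\cdots x_n^{d_n-1}$, which is a unit near $\bar a$, shows that $G$ vanishes to order at least $m$ at $\bar a$.

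Applying Step 2 to $G$, which has degree $D=d+k$, we get $\overline{\ell_i}^{e'}\circ G=0$ as soon as $d+k-e'+1\le m$. We may take $e'=d+k-m+1=d_i+k=d_i'$; the hypothesis $d_i+k\ge0$ makes this exponent meaningful. This proves the implication \eqref{eq:cremonaimplication}. I expect the care needed here to be in the bookkeeping of the order of vanishing under $\phi$, in particular checking that working with the non-homogeneous function $F\circ\phi$ and then multiplying by a unit causes no loss.

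\emph{Step 4: the dimension inequality.} By \eqref{eq:inverse}, $\dim_\kk(S/(\ell_1^{d_1},\ldots,\ell_r^{d_r}))_d$ equals the dimension of the space of $F\in S'_d$ annihilated by all $\ell_i^{d_i}$. Steps 1--3 give an injective linear map from this space into the space of $G\in S'_{d+k}$ annihilated by all $\overline{\ell_i}^{d'_i}$. Applying \eqref{eq:inverse} to this second space gives the stated inequality.
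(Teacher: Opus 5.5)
Your proof is correct. For the coordinate forms and for the final dimension count it is the same as the paper's. The difference is in how you handle a full-support form $\ell_i$.

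\emph{The paper's route.} It disposes of $d_i>d$ by degree. Otherwise it uses Emsalem--Iarrobino to write $F$ as a combination of products $h=h_1\cdots h_{d-d_i+1}\,m$, with $h_j=a_sx_t-a_tx_s$ and $m$ a monomial. Inversion sends each $h_j$ to $(x_sx_t)^{-1}(a_sx_s-a_tx_t)$, which is killed by $\overline{\ell_i}$. The transformed summand is then $m'\prod_j(a_{s_j}x_{s_j}-a_{t_j}x_{t_j})$ with $\deg m'=d_i-1+k$, and Leibniz plus pigeonhole gives annihilation by $\overline{\ell_i}^{d_i+k}$. This is explicit and shows where $\overline{\ell_i}$ comes from. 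However, it needs every transformed summand to be an honest polynomial, since a Laurent monomial with a negative exponent is not killed by any power of $\overline{\ell_i}$. That means each individual $h$ must satisfy the exponent bounds, and the paper asserts this without argument.

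\emph{Your route.} You never decompose $F$. Pointwise apolarity turns both hypothesis and conclusion into vanishing orders at $a$ and $\bar a$. The torus automorphism $\phi$, with $\phi(\bar a)=a$, together with the unit $x_1^{d_1-1}\cdots x_n^{d_n-1}$, carries the order $d-d_i+1$ over unchanged. The shift $d_i\mapsto d_i+k$ is then just the change of degree from $d$ to $d+k$. The cases $d_i>d$, $d_i\le d$ and $d_i+k=0$ (where you get $G=0$, consistent with Lemma~\ref{lemma:cremona1zero}) are all handled uniformly.

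If you prefer to avoid completions, note that $\phi^*(x_j-a_j)=-a_jx_j^{-1}(x_j-a_j^{-1})$ in $\kk[x_1^{\pm1},\ldots,x_n^{\pm1}]$. So $\phi^*$ maps $\mathfrak m_a^m$ onto $\mathfrak m_{\bar a}^m$ there, and contracting back to $\kk[x_1,\ldots,x_n]$ loses nothing because $\mathfrak m_{\bar a}^m$ is $\mathfrak m_{\bar a}$-primary and $\bar a$ lies in the torus.

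One small slip: $\ell^0\circ H=0$ is not a vacuous condition, since it says $H=0$. Your derivation in Step~2 already treats $e=0$ correctly, so nothing depends on that remark.
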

\begin{proof}

To simplify the notation, let \[G= x_1^{d_1-1} \cdots x_n^{d_n-1} F(x_1^{-1},\ldots,x_n^{-1}).\]

Since $\ell_i^{d_i} \circ F=0$ it follows that the $x_i$-degree of $F$ is 
at most $d_i-1$ for $i=1,\ldots,n$.  Hence $G$ is a polynomial in $S_{d_1-1+\cdots+d_n-1-d} = S_{d+k}$,  whose $x_i$-degree is
at most $d_i-1$ for $i=1,\ldots,n$, so 
\[\overline{\ell_i}^{d_i} \circ G = 0 \text{ for } i = 1, \ldots, n.\]

We now proceed to the case $n+1 \leq i \leq r$. Fix one such $i$. Suppose first that $d-d_i+1 \leq 0.$ In this case $d_i > d$, so $d_i+k > d + k.$ Since $G \in S_{d+k}$, it follows that 
\[\overline{\ell_i}^{d_i+k} \circ G = 0.\]
Suppose instead that $d-d_i+1 > 0.$ 
By the Emsalem-Iarrobino correspondence, the inverse system of $(\ell_1^{d_1},\ldots,\ell_r^{d_r})$ of degree $d$ is 
\[(\mathfrak{p}_1^{d-d_1+1} \cap \cdots \cap \mathfrak{p}_r^{d-d_r+1})_d,\] (where $\mathfrak{p}_j^{d-d_j+1} = (1)$ if $d-d_j+1 \leq 0$).
It follows that $F$  can be written as a linear combination of elements of the form \[h=h_{1} \cdots h_{d-d_i+1} \cdot m,\] with $ h_j \in (\mathfrak{p}_i)_1 \text{ for }j = 1, \ldots, d-d_i+1$, and $m$ 
a monomial of degree $d_i-1$. If we can show that 
\[\overline{\ell_i}^{d_i+k} \circ x_1^{d_1-1} \cdots x_n^{d_n-1} h(x_1^{-1},\ldots,x_n^{-1}) =0,\] then we are done.

Since $h$ is also an element in $(\mathfrak{p}_1^{d-d_1+1} \cap \cdots \cap \mathfrak{p}_n^{d-d_n+1})_d,$ we can as before  assume that the $x_j$-degree is at most $d_j-1$ for $j = 1, \ldots, n,$ so $x_1^{d_1-1} \cdots x_n^{d_n-1} h(x_1^{-1},\ldots,x_n^{-1})$ is a polynomial in $S_{d+k}$, and that if it is the zero polynomial, then \ref{eq:cremonaimplication} holds.
Thus we can suppose that $x_1^{d_1-1} \cdots x_n^{d_n-1} h(x_1^{-1},\ldots,x_n^{-1})$ is non-zero of degree $d+k$.
Writing  $l_i = a_1 x_1 + \cdots + a_n x_n$  
we can assume that $h_j = a_{s_j} x_{t_j} - a_{t_j} x_{s_j}$ for $j=1,\ldots,d-d_i+1$, so that
\[h = m \cdot \prod_{j=1}^{d_i-1} (a_{s_j} x_{t_j} - a_{t_j} x_{s_j}).\]
We then have \begin{align*}
h(x_1^{-1},\ldots,x_n^{-1}) = m(x_1^{-1},\ldots,x_n^{-1}) \cdot \prod_{j=1}^{d_i-1} (a_{s_j} x_{t_j}^{-1} - a_{t_j} x_{s_j}^{-1}) = \\
m(x_1^{-1},\ldots,x_n^{-1}) \cdot \prod_{j=1}^{d_i-1} \frac{(a_{s_j} x_{s_j} - a_{t_j} x_{t_j}) }{x_{t_j} x_{s_j}} =\\
m(x_1^{-1},\ldots,x_n^{-1}) \cdot  \prod_{j=1}^{d_i-1} x_{t_j}^{-1} x_{s_j}^{-1} \cdot  \prod_{j=1}^{d_i-1} (a_{s_j} x_{s_j} - a_{t_j} x_{t_j}).
\end{align*}
Since $x_1^{d_1-1} \cdots x_n^{d_n-1} h(x_1^{-1},\ldots,x_n^{-1})$ is a polynomial, it follows that
\[x_1^{d_1-1} \cdots x_n^{d_n-1} \cdot m(x_1^{-1},\ldots,x_n^{-1}) \cdot  \prod_{j=1}^{d_i-1} x_{t_j}^{-1} x_{s_j}^{-1}\] is a monomial of degree $d_i-1+k$, which we call $m'.$
Thus \[x_1^{d_1-1} \cdots x_n^{d_n-1} h(x_1^{-1},\ldots,x_n^{-1}) = 
m' \prod_{j=1}^{d_i-1} (a_{s_j} x_{s_j} - a_{t_j} x_{t_j}).\]

Since $\overline{\ell_i} \circ (a_{s_j} x_{s_j} - a_{t_j} x_{t_j}) = 0$ for $j=1,\ldots,d-d_i+1$, it follows from the pigeonhole principle and the generalized Leibniz rule that \[\overline{\ell_i}^{d_i+k} \circ x_1^{d_1-1} \cdots x_n^{d_n-1} h(x_1^{-1},\ldots,x_n^{-1}) =0.\]

Finally, the inequality \[\dim_{\mathbf{k}} (S/(\ell_1^{d_1},\ldots, \ell_r^{d_r}))_d \leq \dim_{\mathbf{k}} (S/(\overline{\ell_1}^{d'_1},\ldots, \overline{\ell_r}^{d'_r}))_{d+k}\] follows since if $F_1,\ldots,F_m$ are linearly
independent, then so are 
\[ x_1^{d_1-1} \cdots x_n^{d_n-1} F_1(x_1^{-1},\ldots,x_n^{-1}), 
\ldots,
x_1^{d_1-1} \cdots x_n^{d_n-1} F_m(x_1^{-1},\ldots,x_n^{-1}).
\]

\end{proof}

\begin{lemma} \label{lemma:cremona1zero}
Let $\ell_1,\ldots,\ell_r$ be linear forms in $S$ such that $\ell_i=x_i$ for $i = 1,\ldots,n$, and such that $\ell_{i}$ has full support for $i=n+1,\ldots,r$. Let 
$d_1,\ldots,d_r$ be positive integers, let $d$ be a non-negative integer, and 
let $k=d_1+\cdots+d_n-n-2d.$ 

Suppose that $d+k<0$ or $d_j+k \leq 0$ for some $j \in \{n+1,\ldots,r\}$. Let $F \in S_d$ and suppose that 
$\ell_i^{d_i} \circ F = 0 \text{ for } i = 1,\ldots,r.$ Then $F = 0$, and
in particular, \[\dim_{\mathbf{k}} (S/(\ell_1^{d_1},\ldots, \ell_r^{d_r}))_d = 0.\]
\end{lemma}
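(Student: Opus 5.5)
The plan is to reuse the reversal map $F\mapsto G=x_1^{d_1-1}\cdots x_n^{d_n-1}F(x_1^{-1},\ldots,x_n^{-1})$ from the proof of Lemma~\ref{lemma:cremona1}, and to show that in both cases its target space is forced to be zero. The map is injective on polynomials whose $x_i$-degree is at most $d_i-1$ for $i=1,\ldots,n$, since it sends distinct monomials to distinct monomials. So it suffices to show that $G=0$. The final dimension statement then follows immediately, because the inverse system of $(\ell_1^{d_1},\ldots,\ell_r^{d_r})$ in degree $d$ is zero and we can use (\ref{eq:inverse}).

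First I handle $d+k<0$. Since $x_i^{d_i}\circ F=0$ for $i\le n$, every monomial $x^a$ in the support of $F$ satisfies $a_i\le d_i-1$ and $\sum a_i=d$. Summing these bounds gives $d\le \sum_i (d_i-1)$, which is exactly $d+k=\sum(d_i-1)-d\ge 0$. So if $d+k<0$, no such monomial exists and $F=0$.

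Next I assume $d+k\ge 0$ and $d_j+k\le 0$ for some $j\ge n+1$. If $d-d_j+1\le 0$, then $d_j>d$, so $d_j+k>d+k\ge 0$, a contradiction. Hence $d-d_j+1>0$.

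Now I follow the proof of Lemma~\ref{lemma:cremona1} with $i=j$. By the Emsalem-Iarrobino correspondence, $F$ is a linear combination of elements $h=m\prod_{s=1}^{d-d_j+1}h_s$, where the $h_s\in(\mathfrak p_j)_1$ are binomials $a_{s'}x_{t'}-a_{t'}x_{s'}$ and each $h$ still respects the $x_i$-degree bounds. For each $h$ whose transform is nonzero, that proof writes the transform as $m'\prod_s(a_{s'}x_{s'}-a_{t'}x_{t'})$. Here the transform has degree $d+k$ and the product has $d-d_j+1$ linear factors. Comparing degrees, the monomial $m'$ has degree $(d+k)-(d-d_j+1)=d_j+k-1\le -1$, which is impossible. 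So every such transform vanishes, hence $G=0$ by linearity, and $F=0$ by injectivity.

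The main obstacle is justifying the decomposition of $F$ into terms $h$ that individually satisfy the $x_i$-degree bounds, so that the transform of each term is a genuine polynomial and the degree count for $m'$ is legitimate. I would argue this as in Lemma~\ref{lemma:cremona1}: each $h$ also lies in the fat point component $(\mathfrak p_1^{d-d_1+1}\cap\cdots\cap\mathfrak p_n^{d-d_n+1})_d$. Alternatively, one can note directly that the degree count forces the transform of each binomial product to vanish monomial by monomial.
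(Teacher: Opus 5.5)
Your case $d+k<0$ is fine (it is the paper's socle-degree argument made explicit). The case $d_j+k\le 0$, however, has a genuine gap, exactly at the step you flag.

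The Emsalem--Iarrobino correspondence only gives $F\in M_d\cap(\mathfrak p_j^{e})_d$, where $e=d-d_j+1$ and $M_d$ is the span of the monomials of $x_i$-degree at most $d_i-1$ for $i\le n$. Expanding $F=\sum c_h h$ with $h=m\prod_s h_s$ uses only that such products span $(\mathfrak p_j^{e})_d$. Nothing forces an individual $h$ into $M_d$; only the particular combination $F$ lies there, with the out-of-range monomials cancelling. Appealing to the proof of Lemma~\ref{lemma:cremona1} does not close this, since the corresponding sentence there is stated without justification, and the paper does not use it for the present lemma.

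Here the step is in fact circular. Your own degree count shows that when $d_j+k\le 0$, no nonzero product of a monomial and $e$ binomials $a_ux_v-a_vx_u$ lies in $M_d$. So ``$F$ is a combination of such $h$'' is literally equivalent to $F=0$. The ``monomial by monomial'' alternative fails for the same reason: for $h\notin M_d$ the transform is only a Laurent polynomial, and a Laurent prefactor of negative degree is no contradiction.

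The paper instead uses Stanley's theorem. A full-support linear form is a strong Lefschetz element of $A=S/(x_1^{d_1},\ldots,x_n^{d_n})$, whose Hilbert function is symmetric about $(\sum_i d_i-n)/2=d+k/2$. If $d_j=-k$, the degrees $d+k$ and $d$ are symmetric about this centre, so $\times\ell_j^{d_j}\colon A_{d+k}\to A_d$ is bijective and $(A/(\ell_j^{d_j}))_d=0$. If $d_j<-k$, the ideal $(x_1^{d_1},\ldots,x_n^{d_n},\ell_j^{d_j})$ contains $(x_1^{d_1},\ldots,x_n^{d_n},\ell_j^{-k})$, which reduces to the previous case.

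If you prefer to keep the reversal map, argue with $F$ globally rather than term by term.
\begin{itemize}
\item Write $\ell_j=a_1x_1+\cdots+a_nx_n$ and let $\bar{\mathfrak p}$ be the ideal of the point $[a_1^{-1}:\cdots:a_n^{-1}]$.
\item $T(F)=x_1^{d_1-1}\cdots x_n^{d_n-1}F(x_1^{-1},\ldots,x_n^{-1})$ is a polynomial of degree $d+k$ because $F\in M_d$.
\item Substituting $F=\sum c_h h$ shows $\mu\, T(F)\in\bar{\mathfrak p}^{e}$ for some monomial $\mu$.
\item Since $\bar{\mathfrak p}^{e}$ is $\bar{\mathfrak p}$-primary and $\mu\notin\bar{\mathfrak p}$, you get $T(F)\in(\bar{\mathfrak p}^{e})_{d+k}$.
\item This space is zero because $e>d+k$ exactly when $d_j+k\le 0$, and injectivity of $T$ then gives $F=0$.
\end{itemize}
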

\begin{proof}

Recall that a linear form supported on $x_1,\ldots,x_n$ is a strong Lefschetz element on $A=S/(x_1^{d_1},\ldots,x_n^{d_n})$, that the degree of the Hilbert series of the latter algebra is 
$d_1-1+\cdots+d_n-1$, and that it is symmetric along $(d_1+\cdots+d_n - n)/2$.

If $d+k < 0$, which is equivalent to $d_1-1+\cdots+d_n-1<d$, then $d$ is above the socle degree of $A.$ Thus $F=0$.

Suppose instead that $d+k \geq 0$ and that $d_j+k=0$. Since $\ell_j^{-k}$ is a strong Lefschetz element, the map induced by multiplication by $\ell_j^{-k}$ from degree $(d_1+\cdots+d_n - n)/2 -d_j/2=d+k$ to degree
$(d_1+\cdots+d_n - n)/2 + d_j/2 = d$ is a bijection. Hence, the inverse system in degree $d$ contains only the zero polynomial.
 
Thus, the proof is complete for the case $d_j+k =0$. The case $d_j+k<0$ follows from the previous case together with the inequality
$\dim_{\mathbf{k}} (S/(x_1^{d_1},\ldots,x_n^{d_n}, \ell^{-k}))_d \geq 
\dim_{\mathbf{k}} (S/(x_1^{d_1},\ldots,x_n^{d_n}, \ell^{d_j}))_d,$
which holds since  $-k>d_j$ by assumption.
\end{proof}

\begin{theorem} \label{thm:cremonafirst}
Let $\ell_1,\ldots,\ell_r$ be general linear forms in $S$,  let $d_1,\ldots,d_r$ be positive integers, let $d$ be a non-negative integer, and let $k =  d_1+\cdots+d_n-n-2d.$ 

If $d_j +k \leq 0$ for some 
$n+1 \leq j \leq r$ or $d-k<0$, then 
$\dim_{\mathbf{k}} (S/(\ell_1^{d_1},\ldots,\ell_r^{d_r}))_{d} = 0.$
Otherwise
\[
\dim_{\mathbf{k}} (S/(\ell_1^{d_1},\ldots,\ell_r^{d_r}))_{d} = 
\dim_{\mathbf{k}} S/(\ell_1^{d'_1},\ldots,\ell_r^{d'_r})_{d+k}, \]
where 
\[
d'_i=\begin{cases}
d_i &\text{ if } 1 \leq i \leq n\\
d_i+k &\text{ if } n+1 \leq i \leq r.
\end{cases}
\]
\end{theorem}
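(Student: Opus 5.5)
We reduce to the normalized situation of Lemmas \ref{lemma:cremona1} and \ref{lemma:cremona1zero} and then get equality by applying the transformation twice. Since $\ell_1,\ldots,\ell_n$ are general, they are linearly independent, so after a linear change of coordinates we may assume $\ell_i = x_i$ for $i=1,\ldots,n$. The remaining forms $\ell_{n+1},\ldots,\ell_r$ are still general in the new coordinates, and in particular have full support. The Hilbert function is invariant under this change of coordinates, so both dimensions in the statement may be computed in this normalized setting.

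The vanishing case is exactly Lemma \ref{lemma:cremona1zero}. That lemma gives $F=0$ for every $F$ in the inverse system in degree $d$ when $d_j+k\le 0$ for some $j$, or when $d+k<0$. (The condition ``$d-k<0$'' in the statement should presumably read $d+k<0$, matching the lemma.) By \eqref{eq:inverse} this means the quotient vanishes in degree $d$.

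In the remaining case we have $d_i+k>0$ for all $i>n$ and $d+k\ge 0$. Lemma \ref{lemma:cremona1} then gives
\[\dim (S/(\ell_i^{d_i}))_d \le \dim (S/(\overline{\ell_i}^{d'_i}))_{d+k}.\]
Two points remain. First, the forms $\overline{\ell_{n+1}},\ldots,\overline{\ell_r}$ are again general, since coordinatewise inversion is a birational map of the coefficient space and so maps a dense open set to a set containing a dense open set. Hence the right-hand side equals $\dim (S/(\ell_1^{d'_1},\ldots,\ell_r^{d'_r}))_{d+k}$ for general forms. Second, for the reverse inequality we apply Lemma \ref{lemma:cremona1} to the data $(\overline{\ell_i}, d'_i, d+k)$. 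Its new shift is
\[k' = d_1+\cdots+d_n-n-2(d+k) = -k,\]
so the exponents return to $d'_i-k = d_i$ and the degree returns to $(d+k)-k=d$. The hypothesis $d'_i+k' = d_i\ge 0$ holds, and $\overline{\overline{\ell}}=\ell$. This yields $\dim(S/(\overline{\ell_i}^{d'_i}))_{d+k}\le \dim(S/(\ell_i^{d_i}))_d$, and the two inequalities give equality.

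The main obstacle is making the genericity transfer rigorous. One must check that the open sets where the two Hilbert functions attain their generic values can be chosen compatibly under the involution $\ell\mapsto\overline\ell$, so that a single general tuple works for both inequalities simultaneously. I would handle this by intersecting the dense open set on which $(\ell_i)$ is general with the preimage under inversion of the dense open set on which $(\overline{\ell_i})$ is general. A secondary check is that the second application of Lemma \ref{lemma:cremona1} really has full-support forms and positive exponents $d'_i$; both follow from the case assumptions.
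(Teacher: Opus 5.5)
Your proposal is correct and follows the paper's proof essentially step for step. You normalize so that $\ell_i=x_i$ for $i\le n$, take the vanishing case from Lemma~\ref{lemma:cremona1zero}, and get equality by applying Lemma~\ref{lemma:cremona1} to $(\ell_i,d_i,d)$ and again to $(\overline{\ell_i},d'_i,d+k)$ with $k'=-k$. You are right that $d-k<0$ should read $d+k<0$, and your genericity handling is sound; it is even easier than you expect, since both inequalities hold for every full-support tuple, so the two dimensions agree pointwise under $\ell\mapsto\overline{\ell}$.
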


\begin{proof}
After a linear change of coordinates we can assume that $\ell_i = x_i$ for $i=1,\ldots,n$ and that
$\ell_i$ has full support for $i = n+1,\ldots,r$. The first statement now follows by Lemma \ref{lemma:cremona1zero}.

For the second statement, we get by Lemma \ref{lemma:cremona1} that  \[\dim_{\mathbf{k}} (S/(\ell_1^{d_1},\ldots, \ell_r^{d_r}))_d \leq \dim_{\mathbf{k}} (S/(\overline{\ell_1}^{d'_1},\ldots, \overline{\ell_r}^{d'_r}))_{d+k}.\] 
If we can show that 
\[
\dim_{\mathbf{k}} (S/(\overline{\ell_1}^{d'_1},\ldots, \overline{\ell_r}^{d'_r}))_{d+k} \leq
\dim_{\mathbf{k}} (S/(\ell_1^{d_1},\ldots, \ell_r^{d_r}))_d,\] 
then we are done. We do this by applying 
Lemma \ref{lemma:cremona1} with respect to $d'_1, \ldots, d'_{r}$ and $d' = d + k$. We need to check that
$d'_j + k' > 0$ for $j = n+1,\ldots,r$ and that $d'+k' \geq 0$, where $k' = d_1+\cdots+d_n-n-2d' =-k$.
We get $d'_j + k' = d_j + k - k = d_j$ and $d' + k' = d+k-k= d,$ so the assumptions in Lemma \ref{lemma:cremona1} are indeed satisfied. 
\end{proof}

\begin{remark}
It is possible to relate the inverse systems before and after the Cremona transformation. Indeed, with the notation as in Lemma \ref{lemma:cremona1} and Theorem \ref{thm:cremonafirst}, we can argue similarly as in the proof of Theorem  \ref{thm:cremonafirst} to see that   
if $d_j +k \leq 0$ for some 
$n+1 \leq j \leq r$ or $d-k<0$, then 
$(\ell_1^{d_1},\ldots,\ell_r^{d_r})^{-1} = \{0\}$, and
otherwise 
\[
F \in ( (\ell_1^{d_1},\ldots,\ell_r^{d_r})^{-1})_{d} 
\Leftrightarrow
x_1^{d_1-1} \cdots x_n^{d_n-1} F(x_1^{-1}, \ldots, x_n^{-1}) \in
((\overline{\ell_1}^{d'_1},\ldots, \overline{\ell_r}^{d'_r})^{-1})_{d+k}. \]
\end{remark}

It is useful to think of the first Cremona transformation as a reflection along the symmetry line of the underlying complete intersection, as illustrated in Figure \ref{fig:cremona}.

\begin{figure}[!ht]
\centering
\begin{tikzpicture}[x=0.37cm,y=0.02cm]
  \draw[->,gray] (-1,0) -- (33,0) node[right] {};
  \draw[->,gray] (0,0) -- (0,520) node[above] {};
  \foreach \x in {0,...,31}{
    \draw[gray!70] (\x,0) -- (\x,-5pt);
  }
  \foreach \x/\lab in {13/13,14/14,15/15,16/16,17/17,18/18,19/19}{
    \node[gray!30,below] at (\x,-5pt) {\lab};
  }
  \draw[dashed,thick,gray!70] (16,0) -- (16,520);
  \draw[thin,blue]
    plot coordinates {
      (0,1) (1,4) (2,10) (3,20) (4,35) (5,56) (6,84) (7,120)
      (8,165) (9,216) (10,270) (11,324) (12,375) (13,420)
      (14,456) (15,480) (16,489) (17,480) (18,456) (19,420)
      (20,375) (21,324) (22,270) (23,216) (24,165) (25,120)
      (26,84) (27,56) (28,35) (29,20) (30,10) (31,4) (32,1)
    };
  \foreach \x/\y in {
    0/1, 1/4, 2/10, 3/20, 4/35, 5/56, 6/84, 7/120,
    8/165, 9/216, 10/270, 11/324, 12/375, 13/420,
    14/456, 15/480, 16/489, 17/480, 18/456, 19/420,
    20/375, 21/324, 22/270, 23/216, 24/165, 25/120,
    26/84, 27/56, 28/35, 29/20, 30/10, 31/4, 32/1
  }{
    \filldraw[blue] (\x,\y) circle (0.7pt);
  }
  \draw[thin,red]
    plot coordinates {
      (0,1) (1,4) (2,10) (3,20) (4,35) (5,56) (6,84) (7,120)
      (8,165) (9,214) (10,262) (11,304) (12,335) (13,350)
      (14,344) (15,312) (16,249) (17,154) (18,60) (19,4)
    };
  \foreach \x/\y in {
    0/1, 1/4, 2/10, 3/20, 4/35, 5/56, 6/84, 7/120,
    8/165, 9/214, 10/262, 11/304, 12/335, 13/350,
    14/344, 15/312, 16/249, 17/154, 18/60, 19/4
  }{
    \filldraw[red] (\x,\y) circle (0.7pt);
  }
  \filldraw[red] (17,154) circle (2.0pt);
  \node[red,anchor=west] at (17.3,154) {$154$};
  \filldraw[red] (18,60) circle (2.0pt);
  \node[red,anchor=west] at (18.3,60) {$60$};
  \filldraw[red] (19,4) circle (2.0pt);
  \node[red,anchor=west] at (19.3,4) {$4$};
  \draw[thin,green!60!black]
    plot coordinates {
      (0,1) (1,4) (2,10) (3,20) (4,35) (5,56) (6,84) (7,118)
      (8,157) (9,196) (10,230) (11,254) (12,263) (13,252)
      (14,217) (15,154) (16,77) (17,16)
    };
  \foreach \x/\y in {
    0/1, 1/4, 2/10, 3/20, 4/35, 5/56, 6/84, 7/118,
    8/157, 9/196, 10/230, 11/254, 12/263, 13/252,
    14/217, 15/154, 16/77, 17/16
  }{
    \filldraw[green!60!black] (\x,\y) circle (0.7pt);
  }
  \filldraw[green!60!black] (15,154) circle (2.0pt);
  \node[green!60!black,anchor=east] at (14.7,154) {$154$};
  \draw[thin,magenta]
    plot coordinates {
      (0,1) (1,4) (2,10) (3,20) (4,35) (5,54) (6,76) (7,100)
      (8,125) (9,146) (10,159) (11,160) (12,145) (13,110)
      (14,60) (15,16) (16,1)
    };
  \foreach \x/\y in {
    0/1, 1/4, 2/10, 3/20, 4/35, 5/54, 6/76, 7/100,
    8/125, 9/146, 10/159, 11/160, 12/145, 13/110,
    14/60, 15/16, 16/1
  }{
    \filldraw[magenta] (\x,\y) circle (0.7pt);
  }
  \filldraw[magenta] (14,60) circle (2.0pt);
  \node[magenta,anchor=east] at (13.7,60) {$60$};
  \draw[thin,orange!80!black]
    plot coordinates {
      (0,1) (1,4) (2,10) (3,18) (4,27) (5,36) (6,45) (7,54)
      (8,63) (9,68) (10,65) (11,50) (12,27) (13,4)
    };
  \foreach \x/\y in {
    0/1, 1/4, 2/10, 3/18, 4/27, 5/36, 6/45, 7/54,
    8/63, 9/68, 10/65, 11/50, 12/27, 13/4
  }{
    \filldraw[orange!80!black] (\x,\y) circle (0.7pt);
  }
  \filldraw[orange!80!black] (13,4) circle (2.0pt);
  \node[orange!80!black,anchor=east] at (12.7,4) {$4$};
\end{tikzpicture}
\caption{
An illustration on how the first Cremona transform may be interpreted in terms of reflections along the symmetry line of the underlying complete intersection. The blue dots corresponds to the value of the Hilbert function for the complete
intersection $S/I$, where $S=k[x_1,x_2,x_3,x_4]$ and $I=(x_1^9,x_2^9,x_3^9,x_4^9)$, with a peak in degree $16$. The red dots corresponds to $S/(I+(\ell_5^9,\ell_6^9))$, the green dots corresponds to $S/(I+(\ell_5^{9-2},\ell_6^{9-2}))$, the magenta dots corresponds to $S/(I+(\ell_5^{9-4},\ell_6^{9-4}))$, and the orange dots corresponds to  $S/(I+(\ell_5^{9-6},\ell_6^{9-6}))$.
} \label{fig:cremona}
\end{figure}

\subsection{The second Cremona transform for powers of linear forms}

\begin{lemma}  \label{lemma:cremona2}
Let $\ell_1,\ldots,\ell_r$ be linear forms in $S$ such that $\ell_i=x_i$ for $i = 1,\ldots,n$, and such that $\ell_{i}$ is supported by $x_n$ for $i=n+1,\ldots,r$. Let $d_1,\ldots,d_r$ be positive integers. Then we have

\[\ell_i^{d_i} \circ F = 0 \text { for } i =1, \ldots, r \,\,\,\Longleftrightarrow \,\,\,
\ell_i^{d'_i} \circ x_n F = 0 \text { for } i =1, \ldots, r,\]
where $d'_i = d_i$ for $i = 1,\ldots,n-1$ and $d'_i = d_i+1$ for $i=n,\ldots,r$.
\end{lemma}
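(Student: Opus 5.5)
The plan is a direct computation with the generalized Leibniz rule, done separately for the forms that do not involve $x_n$ and for those that do. For $1\le i\le n-1$ we have $\ell_i=x_i$, which acts as $\partial/\partial x_i$ and commutes with multiplication by $x_n$. Hence $x_i^{d_i}\circ(x_nF)=x_n\,(x_i^{d_i}\circ F)$. Since multiplication by $x_n$ is injective on $S'$, the two conditions $x_i^{d_i}\circ F=0$ and $x_i^{d_i}\circ x_nF=0$ are equivalent, with $d_i'=d_i$.

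For $i=n,\ldots,r$, write $\ell_i=a x_n+\ell_i'$, where $a\neq 0$ (this is how I read ``supported by $x_n$'') and $\ell_i'$ involves only $x_1,\ldots,x_{n-1}$. The case $i=n$ is $a=1$, $\ell_i'=0$. Since $\ell_i\circ x_n=a$ is a constant, the Leibniz rule gives
\[
\ell_i^{d_i+1}\circ(x_nF)=x_n\,(\ell_i^{d_i+1}\circ F)+(d_i+1)\,a\,(\ell_i^{d_i}\circ F).
\]
The forward implication is then immediate: if $\ell_i^{d_i}\circ F=0$, then also $\ell_i^{d_i+1}\circ F=0$, so the right-hand side vanishes.

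For the converse, set $G=\ell_i^{d_i}\circ F$. The hypothesis $\ell_i^{d_i+1}\circ(x_nF)=0$ becomes the eigen-equation
\[
x_n\,(\ell_i\circ G)=-(d_i+1)\,a\,G,
\]
and we must show this forces $G=0$. This is the only step needing an idea. I would decompose $G=\sum_j G_j$, where $G_j$ is the part of $x_n$-degree exactly $j$, and write $\ell_i\circ=a\,\partial/\partial x_n+\ell_i'\circ$. The operator $x_n\,a\,\partial/\partial x_n$ preserves $x_n$-degree and acts on $G_j$ as multiplication by $aj$. The operator $x_n\,(\ell_i'\circ\,\cdot)$ raises $x_n$-degree by one.

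Suppose $G\neq 0$ and let $j_0$ be the smallest $j$ with $G_j\neq0$. Comparing the components of $x_n$-degree $j_0$ on both sides gives
\[
a\,j_0\,G_{j_0}=-(d_i+1)\,a\,G_{j_0},
\]
that is, $a\,(j_0+d_i+1)\,G_{j_0}=0$. Since $a\neq0$, $j_0+d_i+1>0$ and $\operatorname{char}\kk=0$, this gives $G_{j_0}=0$, a contradiction. Hence $G=0$, i.e. $\ell_i^{d_i}\circ F=0$, which completes the equivalence with $d_i'=d_i+1$.

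The main point to watch is the interpretation of the hypothesis on $\ell_{n+1},\ldots,\ell_r$. The argument needs the coefficient $a$ of $x_n$ to be nonzero, and it uses characteristic zero at the final step.
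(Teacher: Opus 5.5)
Your proof is correct, but it takes a different route from the paper's. The paper changes coordinates so that $\ell_i$ becomes a coordinate variable. It then uses the fact that $\ell_i^{e}\circ G=0$ exactly when $G$ has degree less than $e$ in the corresponding dual variable. Since $x_n$ has a nonzero component in that direction, multiplication by $x_n$ raises this degree by exactly one, and both implications follow from this single degree count.

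You instead stay in the original coordinates. The forward direction comes from the Leibniz identity $\ell_i^{d_i+1}\circ(x_nF)=x_n(\ell_i^{d_i+1}\circ F)+(d_i+1)a(\ell_i^{d_i}\circ F)$. For the converse, you show that $G\mapsto x_n(\ell_i\circ G)+(d_i+1)aG$ is injective, using an Euler-operator comparison on the lowest $x_n$-degree component.

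The paper's argument is shorter and treats the two directions symmetrically. However, it leaves implicit how the dual coordinates transform under the change of variables, and why $x_n$ then has a nonzero component along $\ell_i$. Your argument avoids the coordinate change entirely and shows exactly where $a\neq 0$ and characteristic zero are used, at the cost of a slightly longer converse. The paper also needs characteristic zero, in its degree characterization of $\ell_i^{e}\circ G=0$.

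Your reading of ``supported by $x_n$'' as ``nonzero $x_n$-coefficient'' is the one the paper's proof uses, and it is genuinely needed. Without it the converse fails: take $n\ge 3$, $\ell_i=x_1+x_2$, $d_i=1$ and $F=x_1$.
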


\begin{proof}
The statement is clearly true for $i = 1,\ldots, n$. Fix one $i$ such that $n+1 \leq i \leq r.$

Since $\ell_i$ is supported by $x_n$, we can make a change of coordinates so that $\ell_i$ is the last variable and $x_n$ is a linear form supported by $\ell_i$. Suppose that $\ell_i^{d_i} \circ F = 0$. Then  the 
$\ell_i$-degree of $F$ is at most $d_i-1$, which implies that the $\ell_i$-degree of $x_n F$ is at most $d_i$, from which
it follows that $\ell_i^{d_i+1} \circ x_n F = 0.$ The other direction follows from a similar argument: Here we have 
$\ell_i^{d_i+1} \circ x_n F = 0$ implying that the $\ell_i$-degree of $x_n F$ is at most $d_i$, so the 
$\ell_i$-degree of $F$ is at most $d_i-1$, so $\ell_i^{d_i} \circ F = 0$.
\end{proof}

\begin{theorem}
Let $\ell_1,\ldots,\ell_r$ be linear forms in $S$ such that $\ell_i=x_i$ for $i = 1,\ldots,n$, and such that $\ell_{i}$ is supported by $x_n$ for $i=n+1,\ldots,r$. Let $d, d_1,\ldots,d_r$ be positive integers such that 
$d \geq d_1+\cdots+ d_{n-1} - (n-1).$

Then we have that 
\[ 
\{G \in S_{d+1}: \ell_i^{d'_i} \circ G = 0 \text { for } i =1, \ldots, r \} = \{ x_n F \in S_{d+1}:  \ell_i^{d_i} \circ F = 0 \text { for } i =1, \ldots, r \}, \]
where $d'_i = d_i$ for $i = 1,\ldots,n-1$ and $d'_i = d_i+1$ for $i=n,\ldots,r$. In particular, we have that
\[\dim_{\mathbf{k}} (S/(\ell_1^{d_1},\ldots,\ell_r^{d_r}))_d = \dim_{\mathbf{k}} (S/(\ell_1^{d'_1},\ldots,\ell_r^{d'_r}))_{d+1}.\]
\end{theorem}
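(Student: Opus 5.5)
The plan is to prove the set equality by two inclusions, both resting on Lemma~\ref{lemma:cremona2}, and then to read off the dimension statement from the duality (\ref{eq:inverse}).

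\textbf{The inclusion ``$\supseteq$''.} Take $F$ with $x_nF\in S_{d+1}$, so $F\in S_d$, and suppose $\ell_i^{d_i}\circ F=0$ for $i=1,\ldots,r$. The direction ``$\Rightarrow$'' of Lemma~\ref{lemma:cremona2} gives directly $\ell_i^{d'_i}\circ x_nF=0$ for all $i$. Hence $x_nF$ lies in the left-hand set.

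\textbf{The inclusion ``$\subseteq$''.} Let $G\in S_{d+1}$ with $\ell_i^{d'_i}\circ G=0$ for all $i$.
\begin{enumerate}
\item For $i\le n-1$ we have $\ell_i=x_i$ and $d'_i=d_i$. So $x_i^{d_i}\circ G=0$, which means every monomial of $G$ has $x_i$-degree at most $d_i-1$.
\item Hence every monomial of $G$ has total degree in $x_1,\ldots,x_{n-1}$ at most $d_1+\cdots+d_{n-1}-(n-1)$.
\item By the hypothesis $d\ge d_1+\cdots+d_{n-1}-(n-1)$, this total degree is at most $d<d+1$.
\item Therefore every monomial of $G$ contains $x_n$, so we can write $G=x_nF$ with $F\in S_d$.
\item The direction ``$\Leftarrow$'' of Lemma~\ref{lemma:cremona2} then gives $\ell_i^{d_i}\circ F=0$ for all $i$. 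So $G$ lies in the right-hand set.
\end{enumerate}

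\textbf{The dimension statement.} Multiplication by $x_n$ is injective on $S'$. It therefore maps the degree-$d$ part of the inverse system $(\ell_1^{d_1},\ldots,\ell_r^{d_r})^{-1}$ bijectively onto the degree-$(d+1)$ part of $(\ell_1^{d'_1},\ldots,\ell_r^{d'_r})^{-1}$, by the set equality just proved. Applying (\ref{eq:inverse}) on both sides turns this into the equality of Hilbert function values.

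The only place where anything beyond the lemma is needed is the divisibility step, and there the hypothesis on $d$ is exactly what guarantees that no monomial of $G$ can avoid $x_n$. I expect no further obstacle, provided Lemma~\ref{lemma:cremona2} is read as giving both directions for every $F$, as it is stated.
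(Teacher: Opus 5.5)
Your proof is correct and follows the paper's argument: the hypothesis on $d$ forces every $G\in S_{d+1}$ annihilated by $x_1^{d_1},\ldots,x_{n-1}^{d_{n-1}}$ to be divisible by $x_n$, and Lemma~\ref{lemma:cremona2} then identifies the two sets. Your deduction of the dimension equality from injectivity of multiplication by $x_n$ together with (\ref{eq:inverse}) spells out a step the paper leaves implicit.
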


\begin{proof}
Since $d \geq d_1+\cdots+ d_{n-1} - (n-1)$ we have that $d+1 > d_1+\cdots+ d_{n-1} - (n-1)$, so for a form $G$ of degree $d+1$ we have that
$\ell_i^{d_i} \circ G = 0$ for $i = 1,\ldots,n-1$ implies that $G = x_n F$. Thus
\[\{G \in S_{d+1} : \ell_i^{d'_i} \circ G = 0, i = 1,\ldots,r \} = 
\{x_n F \in S_{d+1} : \ell_i^{d'_i} \circ x_n F = 0, i = 1,\ldots,r \},\]
which equals 
\[\{x_n F \in S_{d+1} : \ell_i^{d_i} \circ F = 0, i = 1,\ldots,r \} \]
by Lemma \ref{lemma:cremona2}. 
\end{proof}

\section{The Hilbert function for powers of $n+2$ general linear forms for $n=3,4$} \label{sec:mixed}

We now give explicit values of the Hilbert function in the case of $3$ and $4$ generators by means of the Cremona transformations. We remark that these results could also be derived from Theorem \ref{thm:linearsystem}.

\subsection{The case $n=3$}

\begin{proposition} \label{prop:3uniform}
  The Hilbert function of
  $A = S/(\ell_1^d,\ell_2^d,\ell_3^d,\ell_4^d,\ell_5^d)$ for general
  linear forms $\ell_1,\ell_2,\ell_3,\ell_4,\ell_5$ is given by
  \[
    \dim_k A_i =
    \begin{cases}
      \binom{i+2}{2}-5\binom{i+2-d}{2}, & i\le (5d-4)/3\\
      5\binom{2d-i-1}{2}+1, & (5d-6)/3< i
      \le 2d-2\\
      0 & i>2d-2
    \end{cases}
  \]
\end{proposition}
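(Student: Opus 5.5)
We will read off all three cases from Theorem~\ref{thm:n+2equi} with $n=3$, so $m=n+2=5$. For $n=3$ odd that theorem gives $D_{3,5,d}=(n+1)(d-1)/2=2d-2$. This settles the third case at once: $\dim_k A_i=0$ for $i>2d-2$. For $0\le i\le 2d-2$ it gives $\dim_k A_i=a_i$ with $a_i$ as in (\ref{eq:ai}). When $n=3$ the outer sum runs over $r\in\{0,1\}$. For $r=0$ we have $k\in\{0,1,2\}$, and for $r=1$ only $k=0$ occurs. The argument of the $r=1$ term simplifies to $-3d+3+2i-2d+3+i-1=3i-5d+5$. Hence, for $0\le i\le 2d-2$,
\[
a_i=\binom{i+2}{2}-5\binom{i+2-d}{2}+10\binom{i+2-2d}{2}+\binom{3i-5d+5}{2},
\]
with the paper's convention that binomial coefficients with negative top vanish.

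For the first case, $i\le (5d-4)/3$, we discard the last two terms. Since $(5d-4)/3<2d$, we have $i+2-2d\le 1$, so $\binom{i+2-2d}{2}=0$. Likewise $3i-5d+5\le 1$, so $\binom{3i-5d+5}{2}=0$. What remains is exactly $\binom{i+2}{2}-5\binom{i+2-d}{2}$. This is the same computation as part (1) of Lemma~\ref{lemma:differ}, with $g(3,d)=\lfloor (5d-4)/3\rfloor$. The only care needed is the convention for small or negative tops, for example when $i<d-2$.

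For the second case, $(5d-6)/3<i\le 2d-2$, we have $3i-5d+5\ge 0$. The term $\binom{3i-5d+5}{2}$ then agrees with the quadratic polynomial $(3i-5d+5)(3i-5d+4)/2$. It does so even when $3i-5d+5\in\{0,1\}$, where both sides are $0$. Also $i+2-2d\le 0$, so the $k=2$ term drops out. Moreover $i\ge d-1$, so $\binom{i+2-d}{2}$ is given by its polynomial expression as well. The claim therefore reduces to the polynomial identity
\[
\binom{i+2}{2}-5\binom{i+2-d}{2}+\binom{3i-5d+5}{2}=5\binom{2d-i-1}{2}+1 .
\]
Here $2d-i-1\ge 1$ in the range, so the right-hand side is also polynomial. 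We verify this identity by expanding both sides as quadratics in $i$ with coefficients in $d$. The $i^2$ coefficients are $\tfrac12-\tfrac52+\tfrac92=\tfrac52$ on both sides, and the remaining coefficients are compared similarly. As sanity checks we test $i=2d-2$, where both sides equal $1$, and $i=2d-1$, where the right side is $5\cdot 0+1=1$ although we never use it there.

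The two formulas overlap on the integers in $\bigl((5d-6)/3,(5d-4)/3\bigr]$. Consistency there is automatic, since both formulas equal $a_i$ on that window. The main obstacle is purely bookkeeping: making sure each binomial coefficient is either identically zero or genuinely polynomial on each range, especially near $i=d$, $i=(5d-5)/3$ and $i=2d-2$, and for small $d$ such as $d=1,2$. These edge values we would check directly.
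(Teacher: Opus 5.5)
Your argument is correct, but it follows a different route from the paper's. You specialize the closed formula of Theorem~\ref{thm:n+2equi} to $n=3$, which gives $a_i=\binom{i+2}{2}-5\binom{i+2-d}{2}+10\binom{i+2-2d}{2}+\binom{3i-5d+5}{2}$ for $0\le i\le 2d-2$, and vanishing above $D_{3,5,d}=2d-2$. After that everything is bookkeeping. The last two terms vanish for $i\le (5d-4)/3$. On $(5d-6)/3<i\le 2d-2$ every surviving binomial is in its polynomial range. There the identity $\binom{i+2}{2}-5\binom{i+2-d}{2}+\binom{3i-5d+5}{2}=5\binom{2d-i-1}{2}+1$ holds, since both sides equal $\tfrac12(5i^2-20id+20d^2+15i-30d+12)$.

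The paper instead works down from the top degree with its algebraic Cremona transformation. Writing $i=2d-2-j$, two transformations carry the inverse system to that of $(\ell_1^{2j+1},\dots,\ell_4^{2j+1},\ell_5^d)$ in degree $3j$. There the four $(2j+1)$-st powers have the expected Hilbert function and $\ell_5^d$ imposes independent conditions, which yields $\binom{3j+2}{2}-4\binom{j+1}{2}=5\binom{j+1}{2}+1$. The lower range then follows because agreement with Fr\"oberg at $i=\lfloor (5d-4)/3\rfloor$, which is below the Koszul degree, rules out syzygies in lower degrees.

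Your route is shorter and purely mechanical, but it inherits everything from Theorem~\ref{thm:n+2equi}. That means it depends on the Laface--Postinghel--Santana S\'anchez formula and on the degree computation of \cite{BL23}. The paper in fact remarks at the start of Section~\ref{sec:mixed} that these results could be obtained this way. The Cremona argument avoids those external inputs, and it is the method the paper then extends directly to mixed exponents $d_1,\dots,d_5$ in the next proposition.
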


\begin{proof}
  The Hilbert function in the statement agrees with the Hilbert
  function of an ideal generated by five general forms of degree $d$
  when $i\le (5d-4)/3$, which by Fr\"oberg's Conjecture has Hilbert
  series
  \[
    \left[\frac{(1-t^d)^5}{(1-t)^3}\right]
  \]
  When we are $j$ steps to the left of the socle degree $2(d-1)$, we
  are in degree $2d-j-2$ and this satisfies the inequality when
  $d\le 3j+2$.

  We can here use the Cremona transform to get to the inverse system
  in degree $d+j-1$ of
  $(\ell_1^{1+2j},\ell_2^{1+2j},\ell_3^d,\ell_4^d,\ell_5^d)$ with
  $k = 3(d-1)-2(2d-2-j) = 1+2j-d$. We can apply it again to get to the
  inverse system in degree $3j$ of
  $(\ell_1^{2j+1},\ell_2^{2j+1},\ell_3^{2j+1},\ell_4^{2j+1},\ell_5^d)$.
  Now we can deduce that this has the expected dimension if $d\ge 3j$
  since the almost complete intersection given by the first four has
  the expected Hilbert function and the last form give independent
  conditions in degree $d$. By the formula in Fr\"oberg's Conjecture,
  we get that this value is
  \[
    \binom{3j+2}{2}-4\binom{j+1}{2} = 5\binom{j+1}2+1
  \]
  when $d>3j$ and one less when $d = 3j$. Since $i = 2(d-1)-j$, we
  have $j=2d-2-i$ and we get
  \[
    \binom{3j+2}{2}-4\binom{j+1}{2} = 5\binom{2d-i-1}2+1.
  \]
  For $i \le 2(d-1)-\lfloor d/3\rfloor = \lfloor (5d-4)/3\rfloor$, we
  have that the Hilbert function agrees with the one given by general
  forms.

  Since this is in degree lower than the degrees of the Koszul
  syzygies and the expected Hilbert function is positive, this means
  that there are no syzygies among general linear forms and we have
  proven that there are no syzygies in this degree between
  $\ell_1^d,\ell_2^d,\ell_3^d,\ell_4^d,\ell_5^d$. Hence there can be
  now syzygies in lower degrees between them and the Hilbert function
  has to agree with the Hilbert function of five general forms of
  degree $d$.
\end{proof}

In three variables, we can also get the general case using only the Cremona transformations.  The general picture in three variables and five forms is that the only obstruction we get from getting the expected value of the Hilbert function is the injectivity of the multiplication by the quadric passing through the five points on the multigraded ring obtained from the inverse system, i.e., the Cox-Nagata ring described by Sturmfels and Xu.

This is what we saw in Proposition~\ref{prop:3uniform} and we will now establish this in the general case.  We can assume that $d_1\le d_2\le d_3\le d_4\le d_5$. The general strategy is to apply two Cremona transformations with $k = d_1+d_2+d_3-3-2d$ and $k' = d_1+(d_4+k)+(d_5+k) -3 -2(d+k) = d_1+d_4+d_5-3-2d$. This will lead to the inverse system of
\[
\left(\ell_1^{d_1},\ell_2^{d_2+k'},\ell_3^{d_3+k'},\ell_4^{d_4+k},\ell_5^{d_5+k}\right)
\]
in degree $d+k+k'$. When $d_1\ge d+k+k'$, this will have the expected
Hilbert function given by the Fr\"oberg Conjecture. We have that
\[
  d_1\ge d+k+k'\quad\Longleftrightarrow\quad \sum_{i=1}^5 d_i \le 6 +
  3d
\]
We introduce the notation 
$f_d(d_1,d_2,d_3,d_4,d_5)$
for the expected value of the Hilbert function according to
Fr\"oberg's Conjecture, if it is non-negative, and recall from Section \ref{sec:prel} that
\[
  f_d(d_1,d_2,d_3,d_4,d_5) = \sum_{k=0}^5 (-1)^k \sum_{1\le
    i_1<i_2<\dots<i_k} \binom{d+2-d_{i_1}-\cdots-d_{i_k}}{2}.
\]

\begin{proposition} 
  The Hilbert function of 
  $A =  S/(\ell_1^{d_1},\ell_2^{d_2},\ell_3^{d_3},\ell_4^{d_4},\ell_5^{d_5})$
  for general linear forms $\ell_1,\ell_2,\ell_3,\ell_4,\ell_5$ is given by
  \[
    \dim_k A_d =
    \begin{cases}
      f_d(d_1,d_2,d_3,d_4,d_5)
      , & 3d+4\le s \\
      f_{d-2j}(d_1-j,d_2-j,d_3-j,d_4-j,d_5-j),
       &      3d+6-d_1 \le  s< 3d+4 \\
      0 & s<3d+6-d_1
    \end{cases}
  \]
  where $s = d_1+d_2+d_3+d_4+d_5, j = 3d+5-s,$ and
  $d_1= \min\{d_1,d_2,d_3,d_4,d_5\}$.  
\end{proposition}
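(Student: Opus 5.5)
We follow the strategy just outlined: apply Theorem~\ref{thm:cremonafirst} twice, first with respect to $\ell_1,\ell_2,\ell_3$ and then with respect to $\ell_1,\ell_4,\ell_5$. We order so that $d_1\le d_2\le\dots\le d_5$, and change coordinates so that the relevant three forms become $x_1,x_2,x_3$; generality is preserved at each step.

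The first transform uses $k=d_1+d_2+d_3-3-2d$. It replaces the problem by degree $d+k$ and exponents $(d_1,d_2,d_3,d_4+k,d_5+k)$. The second transform, taken with respect to $\ell_1,\ell_4,\ell_5$, uses $k'=d_1+d_4+d_5-3-2d$. It lands in degree $d+k+k'$ with exponents
\[
\left(d_1,\,d_2+k',\,d_3+k',\,d_4+k,\,d_5+k\right).
\]
Note that $d+k+k'=d_1+s-6-3d=d_1-j-1$ and $k+k'=d_1-j-d-1$.

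\textbf{Range $3d+6-d_1\le s<3d+4$.} Here $j\ge 2$ and $d+k+k'\le d_1-3<d_1$. Since every new exponent exceeds the new degree, the final space is the full polynomial ring, of dimension $\binom{d_1-j+1}{2}$. Alternatively, when some exponent $\le$ degree, one performs a further linear computation. The identity to check is
\[
f_{d-2j}(d_1-j,\dots,d_5-j)=\binom{d_1-j+1}{2}
\]
(or the corresponding value), which I expect to follow from the symmetry of $f$ under these reflections. More precisely, I would check directly that $f$ is invariant under the Cremona substitution. This is the classical fact that the virtual dimension $\binom{d+2}{2}-\sum\binom{d_i}{2}$-type expression is Cremona invariant, verifiable by expanding binomials. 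The Fröberg value on the transformed side is attained because there all exponents are large. Then one transports the value back.

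\textbf{Range $s<3d+6-d_1$, giving dimension $0$.} Here one of the vanishing criteria of Theorem~\ref{thm:cremonafirst} triggers: either $d+k+k'<0$, or some $d_i+k\le 0$ at the first or second stage. I would check that $d_1+k+k'$-type quantities become nonpositive exactly at $s\le 3d+5-d_1$. This uses $d_1-j-1<0$ in this range, so the final degree is negative.

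\textbf{Range $s\ge 3d+4$, the expected Fröberg value.} If all three of $d_1,d_2,d_3$ satisfy $d_i\ge d$-type conditions, use the standard argument: when $d_1\ge d+k+k'$ (equivalently $s\le 3d+6$), the transformed system has at most four nontrivial conditions in a degree below the fifth exponent. Its dimension is then that of an almost complete intersection in three variables, which matches Fröberg by the known case $n\le3$ (Anick). One then uses Cremona invariance of $f$ to transfer the value back. For $s>3d+6$ the transform increases exponents, so I would instead argue by the same device as in Proposition~\ref{prop:3uniform}: beyond this degree the expected value is $0$, or else one reduces to the fact that no nontrivial syzygies exist below the Koszul range.

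\textbf{Main obstacle.} The main obstacle is the bookkeeping in the boundary cases. One must verify that the hypotheses of Theorem~\ref{thm:cremonafirst} (the nonnegativity conditions) hold at each stage, and one must handle the case where some transformed exponent drops below the transformed degree. I would try first to prove the single lemma that $f_d$ is invariant under the Cremona substitution whenever all $\binom{\cdot}{2}$ arguments stay in the polynomial range, and then do case checks.
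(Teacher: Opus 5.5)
\textbf{The middle range has a genuine gap.} Your treatment of $s<3d+6-d_1$ and of $s\ge 3d+4$ follows the paper's proof. The paper applies the double Cremona transform, compares Fröberg values on both sides for $s\in\{3d+4,3d+5,3d+6\}$ with the Koszul terms tracked, and then passes to lower $d$ because only Koszul syzygies occur.

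The middle range $3d+6-d_1\le s<3d+4$ is where the argument breaks. You claim that every transformed exponent exceeds the transformed degree $d_1-j-1$, and this is false. For $\ell_2$,
\[
(d_2+k')-(d+k+k') = d+3-d_1-d_3,
\]
which is negative once $d_1+d_3\ge d+3$. Concretely, take $d=20$ and $d_1=\dots=d_5=12$. Then $s=60$, $j=5$ and $k=k'=-7$. The double transform lands on $(\ell_1^{12},\ell_2^5,\ell_3^5,\ell_4^5,\ell_5^5)$ in degree $6$. Its dimension is $\binom{8}{2}-4\binom{3}{2}=16=f_{10}(7,\dots,7)$, not $\binom{8}{2}=28$. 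So the identity $f_{d-2j}(d_1-j,\dots)=\binom{d_1-j+1}{2}$ that you plan to check is false in general.

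\textbf{The fallback does not close it.} You propose Cremona invariance of $f$, but in this range the truncated Fröberg value is precisely what is \emph{not} invariant. In the example, $f_{20}(12,\dots,12)=231-225=6$, while the transformed side gives $16$. The untruncated virtual dimension is invariant. However, the $\ell_1$ term $\binom{1-j}{2}$ is truncated to $0$ on the transformed side, and this produces the excess $\binom{j}{2}=10$. More fundamentally, the target $f_{d-2j}(d_1-j,\dots,d_5-j)$ concerns different data, so invariance applied to $(d_1,\dots,d_5;d)$ cannot produce it.

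\textbf{The missing idea is the paper's comparison trick.} Take the shifted data $(d_1-j,\dots,d_5-j)$ in degree $d-2j$. There $k$ and $k'$ become $k+j$ and $k'+j$. Hence the double transform reaches the same system $(\ell_2^{d_2+k'},\ell_3^{d_3+k'},\ell_4^{d_4+k},\ell_5^{d_5+k})$ in the same degree $d_1-j-1$. The only change is that the exponent of $\ell_1$ drops from $d_1$ to $d_1-j$, and both exceed $d_1-j-1$, so $\ell_1$ is irrelevant in either case. Note that $d_i-j\ge1$ because $j\le d_1-1$. Thus $\dim A_d$ equals the Hilbert function of the shifted system in degree $d-2j$. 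The shifted data satisfy $s'=3d'+5$, so your case $s\ge 3d+4$ yields $f_{d-2j}(d_1-j,\dots,d_5-j)$.

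\textbf{Smaller points.}
\begin{itemize}
\item Anick's theorem concerns generic forms. What you need for four powers of general linear forms in three variables is the $m=n+1$ case, which follows from Stanley's strong Lefschetz result.
\item At $s=3d+6$ the transformed degree equals $d_1$. So $\ell_1^{d_1}$ imposes exactly one condition, not none; this is the $\binom{2-j}{2}$ term in the paper.
\item For $s>3d+6$ the expected value is not $0$. Your second alternative, propagating equality down from the top degree of the range, is the right device.
\end{itemize}
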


\begin{proof}
  When we apply the two Cremona transformations and $s<3d+6-d_1$, we
  get to the inverse system of
  $(\ell_1^{d_1},\ell_2^{d_2+k'},\ell_3^{d_3+k'},\ell_4^{d_4+k},\ell_5^{d_5+k})$
  in degree $d+k'+k = d_1+s-3d-6<0$. Hence we have that the value of the Hilbert function is zero.

  When $3d+6-d_1\le s <3d+4$, we can compare the inverse system of  
  \[(\ell_1^{d_1},\ell_2^{d_2},\ell_3^{d_3},\ell_4^{d_4},\ell_5^{d_5})\]
  in degree $d$ with the inverse system of the ideal
  \[(\ell_1^{d_1-j},\ell_2^{d_2-j},\ell_3^{d_3-j},\ell_4^{d_4-j},\ell_5^{d_5-j})\]
  in degree $d-2j$. The Cremona transformations of these two will go to the inverse system of $(\ell_1^{d_1},\ell_2^{d_2+k'},\ell_3^{d_3+k'},\ell_4^{d_4+k},\ell_5^{d_5+k})$ and
  $(\ell_1^{d_1-j},\ell_2^{d_2+k'},\ell_3^{d_3+k'},\ell_4^{d_4+k},\ell_5^{d_5+k})$
  in degree $d+k+k'$. When $d_1-j>d+k+k'$, these are equal. We have that
  \[
    d_1-j>d+k+k' \quad \Longleftrightarrow \quad 3d+5-j\ge
    \sum_{i=1}^5 d_i.
  \]
  Observe that in the range where $36+6-d_1\le s< 3d+4$, we have $2\le j\le d_1-1$.

  It remains to consider the range $3d+4\le s$. It suffices to prove the statement for the three cases $s=3d+4$, $s=3d+5$ and $s=3d+6$, since if we have equality with the Hilbert function of generic forms in this degree, we will also have it in lower degrees, since this means that we have only Koszul syzygies in degree $d$.

  First, we consider the case when $d_1+d_2>d$. This means that there are no Koszul relations and
  \begin{equation}\label{eq:FC}
    f_d(d_1,d_2,d_3,d_4,d_5) =\binom{d+2}{2}-\sum_{i=1}^5
    \binom{d+2-d_i}{2}.
  \end{equation}
  The Cremona transformation relates this to
  \begin{multline*}
    f_{d+k+k'}(d_1,d_2+k',d_3+k',d_4+k,d_5+k) \\ =
    f_{d_1+s-3d-6}(d_1,s-3-2d-d_3,s-3-2d-d_2,s-3-2d-d_5,s-3-2d-d_4)
  \end{multline*}
  We let $j = 3d+6-s$, for $j = 0,1,2$ and since there are no Koszul
  relatation here either, we get
  \begin{multline}
    \label{eq:h2}
    \binom{d_1-j+2}{2} - \binom{d_1-j+2-d_1}{2} -
    \sum_{i=2}^5\binom{d_1-j+2-(s-3-2d-d_i)}{2} \\= \binom{d_1-j+2}{2}
    - \binom{2-j}{2} - \sum_{i=2}^5\binom{d_1+d_i-d-1}{2}
  \end{multline}
  In this range, we can expand the binomial coefficients as polynomials and verify that the two expressions are equal.

  When $d_1+d_i\le d$, for some $i=2,3,4,5$, we get a Koszul relatation and we have to add the term
  \[
    \binom{d+2-d_1-d_i}{2}.
  \]
  in the right hand side of (\ref{eq:FC}). Since $d_1+d_i\le d$, we also get that the term
  \[
    \binom{d_1+d_i-d-1}{2}
  \]
  in (\ref{eq:h2}) is zero. These two changes cancel out and we still have equality.

  If we have a second Koszul relation not involving $d_1$, we will add
  \[
    \binom{d+2-d_i-d_j}{2}
  \]
  in (\ref{eq:FC}), but then we will add the same term to the right hand side of (\ref{eq:h2}). For example
  \[\begin{split}
      d+k+k'+2-(d_4+k)-(d_5+k) \\=
      d+2+(d_1+d_4+d_5-3-2d)-(d_1+d_2+d_3-3-2d) -d_4-d_5\\ =
      d+2-d_2-d_3
    \end{split}
  \]
  and
  \[
    d+k+k'+2-(d_2+k')-(d_4+k) = d+2+d_2-d_4.
  \]
  If there was a third order Koszul syzygy, we would have $d\ge d_1+d_2+d_3$ and the Hilbert function of
  $S/(\ell_1^{d_1},\ell_2^{d_2},\ell_3^{d_3})$ would be zero in degree $d$. We have that
  \[\begin{split}
      d_1+d_2+d_3\le d \quad\Longleftrightarrow\quad
      d_1+(d_4+k)+(d_5+k) \le d+d_1+d_4+d_5-3-2d +k \\
      \quad\Longleftrightarrow\quad d_1+(d_4+k)+(d_5+k) \le d+k+k'
    \end{split}
  \]
  which shows that $f_d(d_1,d_2,d_3,d_4,d_5)=f_{d+k+k'}(d_1,d_2+k',d_3+k',d_4+k,d_5+k)=0$ in this case.
\end{proof}

\subsection{The case $n=4$}
In the case of four variables, we need just a little bit more than just the Cremona transformations and this is provided by a result of Sturmfels and Xu. 

\begin{proposition}\label{proposition:4uniform}
  The Hilbert function of 
  $R_{4,6,d} = S/(\ell_1^d,\ell_2^d,\ell_3^d,\ell_4^d,\ell_5^d,\ell_6^d)$ for
  general linear forms $\ell_1,\ell_2,\ell_3,\ell_4,\ell_5,\ell_6$ is
  given by
  \[
    \dim_k [R_{4,6,4}]_i =
    \begin{cases}
      \binom{i+3}{3}-6\binom{i+3-d}{3}, & i\le 2(d-1)\\
      \binom{12d-5i-9}{3}-6\binom{7d-3i-5}{3}
      , & 10(d-1)< 5i\le 12(d-1)\\
      0 & 5i>12(d-1)
    \end{cases}
  \]

\end{proposition}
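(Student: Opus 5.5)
The plan is to read off all three ranges from Theorem~\ref{thm:n+2equi} with $n=4$, using Theorem~\ref{thm:diffdeg} and Lemma~\ref{lemma:differ} for the lower range. The Cremona transformations give an alternative route, noted at the end.

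\textbf{Third range.} For $n=4$ we have $D_{4,6,d}=\lfloor 24(d-1)/10\rfloor=\lfloor 12(d-1)/5\rfloor$. So the Hilbert function vanishes exactly when $5i>12(d-1)$.

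\textbf{First range.} Here $g(4,d)=\lfloor (6(d-1)+1)/3\rfloor=2(d-1)$. By Lemma~\ref{lemma:differ}(1), for $i\le 2(d-1)$,
\[a_i=\sum_{k=0}^{3}(-1)^k\binom{6}{k}\binom{i+3-kd}{3}.\]
For $k\ge 2$ the top entry is $i+3-kd\le 1$, so these binomials vanish. This leaves $\binom{i+3}{3}-6\binom{i+3-d}{3}$, which is the first case of the statement. The degenerate case $(n,d)=(4,2)$ is included, since Lemma~\ref{lemma:differ}(1) still applies for $i\le g(4,d)$.

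\textbf{Middle range, $2(d-1)<i\le 12(d-1)/5$.} I would use formula \eqref{eq:ai} directly; it is valid here because $i\le D_{4,6,d}$. With $n=4$ the outer sum runs over $r=0,1$, and the inner sum over $k\le 3$ and $k\le 1$ respectively. This gives
\[a_i=\sum_{k=0}^{3}(-1)^k\binom{6}{k}\binom{i+3-kd}{3}+\binom{3i-6d+7}{3}-6\binom{3i-7d+7}{3}.\]
In this range $i+3-3d<3$, so the $k=3$ term is zero. I then need to check that the four surviving terms equal $\binom{12d-5i-9}{3}-6\binom{7d-3i-5}{3}$.

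\textbf{Main obstacle.} The hard step is this identity, because of the truncation convention for binomials with negative upper entry. First I would determine, throughout the range, which upper entries ($i+3-2d$, $3i-6d+7$, $3i-7d+7$, $12d-5i-9$, $7d-3i-5$) are $\ge 3$, which lie in $\{0,1,2\}$, and which are negative. Where every binomial is either genuinely polynomial or identically zero on the range, both sides become cubic polynomials in $(i,d)$, and I would compare them by expanding. Any remaining boundary values would be checked by hand. An example is $7d-3i-5$ becoming small near the top of the range, where the convention $\binom{-m}{3}=0$ breaks polynomiality.

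\textbf{Alternative via Cremona.} If the bookkeeping becomes unwieldy, I would instead use Theorem~\ref{thm:cremonafirst}. Put $\ell_1,\dots,\ell_4$ in coordinate position, with $k=4(d-1)-2i$. This sends degree $i$ to degree $4(d-1)-i$, where $\ell_5,\ell_6$ carry exponents $d+k$. Applying it again with a different quadruple should land in a configuration inside the Fr\"oberg range. The formula $\binom{12d-5i-9}{3}-6\binom{7d-3i-5}{3}$ has exactly the shape of the first case after such a transformation. In this route, the failure of the expected dimension is explained by the Sturmfels--Xu description of the Cox--Nagata ring for six points in $\mathbb{P}^3$.
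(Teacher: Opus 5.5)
Your proposal is correct, but it follows a different route from the paper. You read all three ranges off Theorem~\ref{thm:n+2equi}, that is, off the Laface--Postinghel--Santana S\'anchez formula together with the degree $D_{4,6,d}=\lfloor 12(d-1)/5\rfloor$ quoted there. You use Lemma~\ref{lemma:differ} for $i\le g(4,d)=2(d-1)$ and a binomial identity for the middle range.

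The paper does not use Theorem~\ref{thm:linearsystem} here; it only remarks that one could. Instead it gets the value $d(d^2+2)/3$ in degree $2(d-1)$ from the Sturmfels--Xu Verlinde formula (Lemma~\ref{lemma:Verlinde}). From this it deduces that there are no syzygies in degrees $\le 2(d-1)<2d$. It then applies Theorem~\ref{thm:cremonafirst} three times, each time with $k=-2j$, to identify degree $2(d-1)+j$ of $R_{4,6,d}$ with degree $2d-2-5j$ of $R_{4,6,d-2j}$. Since $2d-2-5j\le 2(d-2j-1)$, this lands in the first range, and the degree is negative exactly when $5i>12(d-1)$.

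So the paper gets the middle and top ranges in one stroke, independently of the known degree. It also explains the middle formula as the first-range formula for $d-2j$. Your route is more mechanical but rests on the identity you flag.

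That identity does hold, and more cleanly than your plan suggests. Put $i=2d-2+j$ with $1\le j\le (2d-2)/5$. The upper entries $2d+1+j$, $d+1+j$, $j+1$, $3j+1$ and $2d+1-5j$ are all nonnegative, so those binomials are polynomial in $(d,j)$. The only problematic entries are $3i-7d+7=3j+1-d$ and $7d-3i-5=d+1-3j$, and these sum to $2$.

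Hence, with $x=3j+1-d$, you have $\binom{x}{3}=-\binom{2-x}{3}$ as polynomials, while at most one of the two truncated values is nonzero. In every subregion the claim therefore reduces to the single cubic identity
\[
g(2d+j)-6g(d+j)+15g(j)+g(3j)-g(2d-5j)-6g(3j-d)=0,\qquad g(u)=u^3-u,
\]
which is checked by expansion.

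Note that the region $7d-3i-5<0$ (for example $d=16$, $i=36$) is infinite. It must be handled by this subregion argument, not by checking boundary values by hand.

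In your Cremona alternative, three transformations are needed to return to uniform exponents $d-2j$. That route also still needs an independent input for the first range: Lemma~\ref{lemma:differ}, or, as in the paper, the Verlinde formula.
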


\begin{proof}
  In degree $i = 2(d-1)$, we have that the dimension is given by the
  Verlinde formula (cf. Theorem~\ref{thm:Verlinde} and
  Lemma~\ref{lemma:Verlinde} below) which agrees with the formula
  given by Fr\"oberg's Conjecture, i.e.,
  \[
    \binom{2(d-1)+3}{3}-6\binom{2(d-1)+3-d}{3} =
    \binom{2d+1}{3}-6\binom{d+1}{3} = \frac{d(d^2+2)}{3}.
  \]
  Since degree $i=2(d-1)$ is below the degree of the Koszul syzygies, this means that there are no syzygies between $d$th powers of six general linear forms in degrees less than or equal to $2(d-1)$.

  For degree $i=2(d-1)+j$, where $j>0$, we can use the Cremona transform to get to the inverse system in degree $2d-2-j$ of
  $(\ell_1^{d-2j},\ell_2^{d-2j},\ell_3^d ,\ell_4^d ,\ell_5^d
  ,\ell_6^d)$ with $k = 4(d-1)-2(2d-2+j) = -2j$. We can continue with the Cremona transform to the inverse system in degree $2d-2-3j$ of
  $(\ell_1^{d-2j},\ell_2^{d-2j},\ell_3^{d-2j} ,\ell_4^{d-2j} ,\ell_5^d
  ,\ell_6^d)$ with $k = 2(d-2j)+2d-4-2(2d-2-j) = -2j$. We can get back to uniform powers by one more application of the Cremona transform and we get to the inverse system in degree $2d-2-5j$ of
  $(\ell_1^{d-2j},\ell_2^{d-2j},\ell_3^{d-2j} ,\ell_4^{d-2j}
  ,\ell_5^{d-2j} ,\ell_6^{d-2j})$. When $2d-5-j<0$, we conclude that this is zero-dimensional. Otherwise, we have landed in a degree where we have proven above that the dimension equals the Hilbert function given by six general forms of degree $d-2j$.
\end{proof}

The following theorem is due to Sturmfels and Xu \cite[Thm
7.2]{SX10}. 
\begin{theorem}\label{thm:Verlinde}
  For $n = d+2$ and $G$ generic we have
  \[
    \psi (dl,2l,\dots,2l) = \frac{1}{2l+1}\sum_{j=0}^{2l}
    (-1)^{dj}\left(\sin\left(\frac{2j+1}{4l+2}\pi\right)\right)^{-d}.
  \]
  Here $l$ can be a half integer if $d$ is even but must be an integer if $d$ is odd.
\end{theorem}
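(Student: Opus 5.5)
The plan is not to prove this from the algebra developed so far. I would instead reproduce the geometric route of Sturmfels and Xu~\cite{SX10}, which identifies the relevant graded piece of the Cox--Nagata ring $C$ with a space of $\mathfrak{sl}_2$ conformal blocks and then evaluates it with the Verlinde formula. The setting is $n=d+2$ general points in $\mathbb P^{d-1}$. For $n=d+2$ general linear forms in $d$ variables, $\psi(dl,2l,\dots,2l)$ is by definition the dimension of the graded piece $V_{c_1,\dots,c_n,\,dl}$, with the multidegree chosen so that each point carries a uniform multiplicity condition.

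\textbf{Step 1: from $C$ to parabolic bundles.} I would invoke the Castravet--Tevelev description of the Cox ring of the blow-up $X$ of $\mathbb P^{n-3}$ at $n$ general points (\cite{CT06}). It says the Cox ring of $X$ is the $T$-invariant part of a Pl\"ucker-type algebra. Equivalently, for $n$ points $X$ is a small modification of $\overline{M}_{0,n}$-type moduli of rank two parabolic bundles on $(\mathbb P^1;p_1,\dots,p_n)$, the $p_i$ being the points of the rational normal curve through our points. Under this identification $C$ becomes the ring of generalized theta functions $\bigoplus H^0(\mathcal M,\mathcal L)$ on that moduli space. One must fix the dictionary between the multidegree $(c_1,\dots,c_n,D)$ and the pair (level, parabolic weights). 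I expect total degree $dl$ with multiplicities $c_i=2l$ to correspond to level $2l$ with weights $\lambda_i$, so that the space of sections is the conformal block space $V_{2l}(\lambda_1,\dots,\lambda_n)$ on $\mathbb P^1$.

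\textbf{Step 2: Verlinde formula.} For $\mathfrak{sl}_2$ at level $K=2l$ with $n$ marked points on $\mathbb P^1$ the formula reads
\[
\dim V_K(\lambda_1,\dots,\lambda_n)=\sum_{j=0}^{K}\Bigl(\tfrac{2}{K+2}\Bigr)^{(2-n)/2}\sin\Bigl(\tfrac{(j+1)\pi}{K+2}\Bigr)^{2-n}\prod_{i=1}^n\sin\Bigl(\tfrac{(\lambda_i+1)(j+1)\pi}{K+2}\Bigr).
\]
With $n=d+2$, and the weights forced by Step 1 (I expect all $\lambda_i=l$ or $\lambda_i=K$-extremal up to the symmetry $\lambda\mapsto K-\lambda$), I would simplify the product of sines. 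The identity $\sin\bigl((l+1)(j+1)\pi/(2l+2)\bigr)$-type evaluations yield $\pm1$ or $\cos$ factors. I would then reindex $j$ so that the denominator becomes $4l+2$ and the alternating sign appears as $(-1)^{dj}$. The parity constraint at the end of the statement (half-integral $l$ only when $d$ is even) would then come from the requirement that $\sum\lambda_i$ be even for the block to be nonzero.

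\textbf{Main obstacle.} The hard part is Step 1: getting the precise normalization between the Cox ring multidegree and the level and weights, including the twist by the exceptional divisors. The trigonometric manipulation in Step 2 is routine once the weights are right, and the denominator $4l+2$ strongly constrains the correct choice. I would first test the dictionary on small cases: $d=2$, and $d=3$, $l=1$, where $\psi$ can be computed directly from Theorem~\ref{thm:n+2equi} via the Emsalem--Iarrobino correspondence. Those checks would fix the dictionary before doing the general sine computation.
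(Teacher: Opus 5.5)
\textbf{Verdict.} There is a genuine gap: your level/weight dictionary is wrong, so Step~2 fails as written, and Step~1 is only asserted.

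The paper does not prove this statement; it is quoted from Sturmfels--Xu \cite{SX10}. So your plan to reproduce their route agrees with the paper, and you are right that everything hinges on the dictionary in Step~1.

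\textbf{Where Step~2 breaks.} In the Verlinde sum the denominator is $K+2$, which is fixed by the level. No reindexing of $j$ turns $2l+2$ into $4l+2$. With $K=2l$ and $\lambda_i=l$, the simplification $\sin\frac{(l+1)(j+1)\pi}{2l+2}=\sin\frac{(j+1)\pi}{2}$ (using the correct prefactor) gives
\[
\frac{1}{l+1}\sum_{m=0}^{l}(-1)^{md}\sin\Bigl(\frac{(2m+1)\pi}{2l+2}\Bigr)^{-d}.
\]
This is the target with $l$ replaced by $l/2$. Already for $d=2$ it equals $\dim V_{2l}(l,l,l,l)=l+1$. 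But $\psi(2l;2l,2l,2l,2l)$ counts binary forms of degree $2l$ with no vanishing conditions, which is $2l+1$, and the stated formula correctly returns $2l+1$. Your normalization is also off. In genus $0$ the prefactor is $\frac{2}{K+2}$; you dropped the factor $\bigl(\frac{2}{K+2}\bigr)^{n/2}$ coming from the $n$ entries $S_{\lambda_i j}$.

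\textbf{The correct dictionary.} Take level $K=4l$ and all weights $\lambda_i=2l=K/2$. This is the only uniform weight that makes the terms with even $j+1$ vanish. Use the grading in which $\psi(u_0;u_1,\dots,u_n)$ is the dimension of degree-$u_0$ forms with multiplicity $u_0-u_i$ at $p_i$; this is the paper's $V_{u_1,\dots,u_n,u_0}$. Then the dictionary is $L=\sum_i u_i-2u_0$ and $\lambda_i=u_i$. You can check the level on two families of Cox ring generators, both of which should have level one:
\begin{itemize}
\item the exceptional divisors $E_i$, with $u_0=0$ and $u=e_i$;
\item the hyperplanes through $n-3$ of the points, with $u_0=1$ and $u_i=1$ exactly at the other three points.
\end{itemize}
For $(dl;2l,\dots,2l)$ with $n=d+2$ this gives $L=4l$ and $\lambda_i=2l$. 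Then $\sin\frac{(2l+1)(j+1)\pi}{4l+2}=\sin\frac{(j+1)\pi}{2}$ kills every even $j+1$ and equals $(-1)^m$ for $j+1=2m+1$. The product over $d+2$ points is $(-1)^{md}$ and the prefactor is $\frac{1}{2l+1}$, so the stated formula comes out with no reindexing.

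\textbf{What remains in Step~1.}
\begin{itemize}
\item The level-one generators have odd weight sum, so the Cox--Nagata ring is not literally the untwisted $\mathfrak{sl}_2$ conformal-block ring. One weight must be flipped, $\lambda_n\mapsto L-\lambda_n$, which inserts $(-1)^j$ into the Verlinde sum. This is harmless here, because $2l=L-2l$ and only even $j$ contribute.
\item The identification itself is the real content of \cite{SX10}: a sagbi degeneration, for $n=d+2$, to the binary Jukes--Cantor toric algebra, whose Hilbert function is the Verlinde number. Your Step~1 only asserts it. Even with the dictionary fixed, your proposal amounts to citing that theorem, exactly as the paper does.
\end{itemize}

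\textbf{The half-integer condition.} The restriction on half-integral $l$ just says that $u_0=dl$ must be an integer. With the correct weights it agrees with $\sum_i\lambda_i=2l(d+2)$ being even. With your $\lambda_i=l$, a half-integral $l$ is not even an admissible weight.
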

  
\begin{lemma}\label{lemma:Verlinde}
 The dimension of $R_{4,6,d}$ in degree $2(d-1)$ equals
   \[
    \frac{d(d^2+2)}{3}.
  \]
\end{lemma}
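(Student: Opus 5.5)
The plan is to derive the lemma from Theorem~\ref{thm:Verlinde} after translating the Hilbert function value into a dimension of a linear system of fat points. By the Emsalem--Iarrobino correspondence (\ref{eq:emsalemiarrobino}), the dimension of $R_{4,6,d}$ in degree $i=2(d-1)$ equals $\dim \mathfrak{L}_{3,2(d-1)}((d-1)^6)$. Writing $m=d-1$, this is the dimension of the space of surfaces of degree $2m$ in $\mathbb{P}^3$ vanishing to order $m$ at six general points.

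\textbf{Step 1: match the Verlinde formula.} In Sturmfels--Xu's notation this is a graded piece of the Cox--Nagata ring for $6=4+2$ points. So Theorem~\ref{thm:Verlinde} applies with their "$d$" equal to $4$ and degree vector $(4l,2l,\dots,2l)$, taking $2l=m$. Since their $d=4$ is even, $l=m/2$ may be a half integer. I will check against \cite{SX10} that $\psi(4l,2l,\dots,2l)$ is exactly $\dim \mathfrak{L}_{3,4l}((2l)^6)$, with the first entry the degree and the rest the multiplicities. Matching these conventions is the only conceptual risk. The formula then gives, with $N=m+1=d$ and the sign $(-1)^{4j}=1$,
\[
\dim [R_{4,6,d}]_{2(d-1)} = \frac{1}{N}\sum_{j=0}^{N-1}\csc^4\!\left(\frac{(2j+1)\pi}{2N}\right).
\]

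\textbf{Step 2: evaluate the trigonometric sum.} I start from the classical partial fraction identity
\[
\sum_{j=0}^{N-1}\csc^2\!\left(x+\frac{j\pi}{N}\right)=N^2\csc^2(Nx).
\]
Differentiating twice and using $\frac{d^2}{dx^2}\csc^2 x = 6\csc^4 x-4\csc^2 x$ gives
\[
\sum_j \left(6\csc^4(x+j\pi/N)-4\csc^2(x+j\pi/N)\right)=N^4\left(6\csc^4(Nx)-4\csc^2(Nx)\right).
\]
Now set $x=\pi/(2N)$, so that $\csc(Nx)=1$ and the angles $x+j\pi/N$ are exactly $(2j+1)\pi/(2N)$. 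The first identity gives $\sum\csc^2=N^2$. The differentiated identity then gives $6\sum\csc^4 = 2N^4+4N^2$, so $\sum\csc^4 = N^2(N^2+2)/3$.

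\textbf{Step 3: conclude.} Dividing by $N$ yields $N(N^2+2)/3=d(d^2+2)/3$, which is the claimed value. As a sanity check, this agrees with the Fr\"oberg value $\binom{2d+1}{3}-6\binom{d+1}{3}$ computed in the proof of Proposition~\ref{proposition:4uniform}.

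I expect the main obstacle to be Step 1, namely pinning down the exact indexing conventions of $\psi$ in \cite[Thm 7.2]{SX10}. I will verify the identification on small cases, for instance $d=2$, where the dimension should be $4$ (quadrics in four variables modulo six general squares), and the formula gives $\frac12(\csc^4(\pi/4)+\csc^4(3\pi/4))=4$.
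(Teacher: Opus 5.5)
Your proposal is correct. Step~1 is the same as the paper's: the paper also applies Theorem~\ref{thm:Verlinde} with Sturmfels--Xu's $d=4$, six points and $2l=d-1$, reducing the lemma to $\frac{1}{d}\sum_{j=0}^{d-1}\csc^4\bigl(\frac{(2j+1)\pi}{2d}\bigr)=\frac{d(d^2+2)}{3}$. Your first degree entry $4l=2(d-1)$ is the correct one.

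The two routes differ in how this sum is evaluated.

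The paper splits by parity. For odd $d$, it views the sum as $\frac{1}{d}\mathrm{tr}(L^{-4})$, where $L$ is multiplication by $q=(t+t^{-1})/2$ on $\mathbb{C}[t]/(t^d-1)$. The sum is then the constant term of $q^{-4}=(t^{d-1}-t^{d-3}+\cdots+t^{1-d})^4$ modulo $t^d-1$. The paper reads this off as $h_{2d-2}-h_{d-2}-h_{3d-2}$ for the complete intersection of type $(d,d,d,d)$, which is directly the Fr\"oberg value $\binom{2d+1}{3}-6\binom{d+1}{3}$. For even $d$, it inducts on $d$ using $\csc^4x+\sec^4x=16\csc^4(2x)-8\csc^2(2x)$, together with the companion $\csc^2$-sum.

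Your argument differentiates $\sum_{j=0}^{N-1}\csc^2(x+j\pi/N)=N^2\csc^2(Nx)$ twice and sets $x=\pi/(2N)$. This treats all $d$ at once, with no parity split or induction. The computation is right: the second derivative of $\csc^2x$ is $6\csc^4x-4\csc^2x$, which yields $6\sum\csc^4=2N^4+4N^2$. It also extends mechanically to higher even powers of $\csc$ by further differentiation.

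What the paper's odd-case argument buys in return is a structural reason why the Verlinde number equals the Fr\"oberg prediction. That is the form in which the lemma is used in Proposition~\ref{proposition:4uniform}. In your approach the agreement is a separate polynomial identity, which you rightly record as a final check.
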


\begin{proof}
  We need to apply Theorem \ref{thm:Verlinde} in the case when $n=6$, $d=4$ and
  $2l=d-1$.
  \[
    \psi (4(d-1),d-1,\dots,d-1) = \frac{1}{d}\sum_{j=0}^{d-1}
    (-1)^{4j}\left(\sin\left(\frac{2j+1}{2d}\pi\right)\right)^{-4}.
  \]
  We will now prove that
  \[
    \frac{1}{d} \sum_{j=0}^{d-1}
    \left(\sin\left(\frac{2j+1}{2d}\pi\right)\right)^{-4} =
    \frac{d^3+2d}{3}.
  \]
  For odd $d$, we can look at $A = \mathbb C[t]/(t^d-1)$. The multiplication by $q = (t+t^{-1})/2$ on $A$ is a linear operator $L$ which has eigenvalues $\cos(2\pi j/d)$ for $j=0,1,\dots,d-1$. Since $d$ is odd, we have that the absolute values of these eigenvalues equals $\sin(((2j+1)\pi/2d)$, $j = 0,1,\dots,d-1$. Hence the sum we need to compute is the trace of $L^{-4}$ divided by $d$. This equals the degree $0$ part of $q^{-4}$. We can write $q^{-4}$ as
  \[
    \left(\frac{2}{t+t^{-1}}\right)^4=
    \left(\frac{t^d+t^{-d}}{t+t^{-1}} \right)^4=
    \left(t^{d-1}-t^{d-3}+\cdots -t^{3-d}+t^{1-d}\right)^4.
  \]
  The constant term in this is given by the Hilbert function in degree $2(d-1)$ of the complete intersection of type $(d,d,d,d)$ minus the Hilbert function of the same complete intersection in degrees $d-2$ and $3d-2$. This is exactly the value given by the Fr\"oberg Conjecture as seen above.

  For even $d$, we'll use inducion on $d$ but we also need the corresponding trace of $L^{-2}$. This is given by $d$ as it is the Hilbert function in degree $d-1$ of the complete intersection of
  type $(d,d)$.

  We can use the trigonometric identity
  \[
    \frac{1}{\sin^4(x)} + \frac{1}{\cos^4(x)} =
    \frac{1-2\sin^2(x)\cos^2(x)}{\sin^4(x)\cos^4(x)} =
    \frac{16}{\sin^4(2x)} - \frac{8}{\sin^2(2x)}
  \]
  to see that
  \[\begin{split}
      \sum_{j=0}^{d-1}
      \left(\sin\left(\frac{2j+1}{2d}\pi\right)\right)^{-4} =
      \sum_{j=0}^{d/2-1}
      \left(\sin\left(\frac{2j+1}{2d}\pi\right)\right)^{-4} +
      \sum_{j=0}^{d/2-1}\left(\cos\left(\frac{2j+1}{2d}\pi\right)\right)^{-4}
      \\= 16\sum_{j=0}^{d/2-1}
      \left(\sin\left(\frac{2j+1}{d}\pi\right)\right)^{-4} -
      8\sum_{j=0}^{d/2-1}
      \left(\sin\left(\frac{2j+1}{d}\pi\right)\right)^{-2}
    \end{split}
  \]
  By induction on $d$ we can assume that
  \[
    \sum_{j=0}^{d/2-1}
    \left(\sin\left(\frac{2j+1}{d}\pi\right)\right)^{-4} =
    \frac{(d/2)^4+2(d/2)^2}{3}\]
     and \[
    \sum_{j=0}^{d/2-1}
    \left(\sin\left(\frac{2j+1}{d}\pi\right)\right)^{-2} =
    \left(\frac{d}{2}\right)^2
  \]
  which gives
  \[
    \sum_{j=0}^{d-1}
    \left(\sin\left(\frac{2j+1}{2d}\pi\right)\right)^{-4} = 16
    \frac{(d/2)^4+2(d/2)^2}{3} - 8 \left(\frac{d}{2}\right)^2 =
    \frac{d^4+2d^2}{2}.
  \]
\end{proof}

\section{Results for $m\geq n+3$ $d$'th powers of general linear forms}

Cremona transformations allow us to draw some conclusions also for  $d$'th powers of general linear forms for some $n$ and some $m > n+2$. 

In the following propositions, the idea is to use these transformations to first give an upper bound $d_0=d_0(n,d)$ on the degree of the Hilbert series of $R_{n,m,d}$, and then show that the dimension in degree $d_0$ is positive. 

For the cases $(n,m) \in \{(3,6), (3,7), (3,8), (4,7)\}$, we determine the dimension in degree $d_0$ completely. We show that the dimension is a binomial coefficient for $d$ large enough, while for lower values, the dimension is given by a coefficient in a series predicted by Fr\"oberg. For $(n,m) \in \{(3,6), (3,7)\}$, where there are only a few number of values of $d$ for which the coefficient is not a binomial coefficient, we provide the coefficients explicitly. 

For $(n,m) = (5,8)$ the dimension is again a binomial coefficient for $d$ large enough, and for the remaining values of $d$, some coefficients are given via a series predicted by Fr\"oberg. For the remaining coefficients we instead provide intervals for them. For $(n,m) = (6,9)$, we give a lower bound on the dimension in degree $d_0$. We also give a lower bound on $d_0(k^2-3,d)$ for $k \geq 4$. 

 To simplify the proofs, we introduce the notation 
\[ [n,d_1^{i_1} \cdots d_m^{i_m};D] \]
for the inverse system in degree $D$ of the ideal 
\[ \left(\ell_{1,1}^{d_1}, \ldots, \ell_{1,i_1}^{d_1}, \ldots, \ell_{m,1}^{d_m}, \ldots, \ell_{m,i_m}^{d_m} \right),\]
where each $\ell_{i,j}$ is a general linear form in $S$.

\subsection{The case $n=3$}

\begin{proposition} \label{prop:36}
    For $d\ge 1$, write $12(d-1) = 7d_0+d_1$, where $0\le d_1<7$. Then $d_0$ is the degree of the Hilbert series of $R_{3,6,d}$ and the leading coefficient is
    \[
    c = \binom{d_1+2}{2}
    \]
    except for $d\in \{2,3,5,6,12\}$ where it is given by the following table.
    \begin{center}
        \begin{tabular}{|c|r|r|r|r|r|r|r|} \hline
            $d$ &$2$&$3$&$5$&$6$&$12$\\ \hline
           $c$ & $1$&$4$&$10$&$9$&$22$\\        \hline
           \end{tabular}
    \end{center}
\end{proposition}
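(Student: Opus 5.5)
The plan is to track the inverse system $[3,d^6;D]$ through repeated applications of Theorem~\ref{thm:cremonafirst}. Each application acts on a triple of the forms, which after a change of coordinates we may take to be $x_1,x_2,x_3$. We aim for two outcomes:
\begin{itemize}
\item at $D=d_0+1$ we land in negative degree, or in the vanishing case of Lemma~\ref{lemma:cremona1zero};
\item at $D=d_0$ we land in degree $d_1$ (the remainder in $12(d-1)=7d_0+d_1$) with all exponents larger than $d_1$, so that the inverse system is all of $S_{d_1}$, of dimension $\binom{d_1+2}{2}$.
\end{itemize}
Since generality is preserved, at each step we may reorder the forms and choose the triple with the three \emph{smallest} exponents. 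This choice makes $k=e_1+e_2+e_3-3-2D$ as negative as possible, so the degree drops fastest.

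Concretely, the first transformation on $(\ell_1,\ell_2,\ell_3)$ has $k=3d-3-2D$. It sends $[3,d^6;D]$ to $[3,d^3(d+k)^3;D+k]$. For $D$ near $12(d-1)/7$ we have $k<0$, so the next triple is the three forms of exponent $d+k$. There the new shift is $k_2=3(d+k)-3-2(D+k)=2k$, giving $[3,(d+2k)^3 d^3... ]$; more precisely the exponents become $(d,d,d,d+3k,d+3k,d+3k)$ after relabeling, and the degree becomes $D+3k$. We keep iterating, always reflecting the three smallest exponents. After a bounded number of steps (I expect about three or four, mirroring the $n=3$, $m=5$ computation of Proposition~\ref{prop:3uniform}), the degree becomes $D'=7D-12(d-1)+(\text{const})$, linear in $D$ with slope $7$. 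Writing $D=d_0+\delta$, this degree equals $d_1+7\delta$ up to a fixed correction. I will compute these affine maps explicitly and choose the order of the reflections so that the final degree is exactly $d_1$ when $\delta=0$ and negative when $\delta=1$ (or one of the $e_j+k\le 0$ vanishing conditions triggers). This gives $D_{3,6,d}\le d_0$.

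For the lower bound and the leading coefficient at $D=d_0$, we need every final exponent $e_j$ to exceed the final degree $d_1\le 6$. Then no form imposes a condition, the inverse system is $S_{d_1}$, and its dimension is $\binom{d_1+2}{2}>0$. Since Theorem~\ref{thm:cremonafirst} gives equality of dimensions at each step, this proves both claims. The final exponents are affine in $d$ with positive slope and $d_1\le 6$, so they exceed $d_1$ once $d$ is moderately large. The finitely many remaining $d$ are handled separately.

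The main obstacle is this finite list of small $d$ (and the precise bookkeeping of the reflection sequence to make the final degree come out exactly as $d_1$). There some exponent becomes $\le d_1$, or a hypothesis of Theorem~\ref{thm:cremonafirst} fails midway. For these cases I would stop at the stage where at most five exponents are effective, or where the exponents are mixed. I would then evaluate using the five-form proposition for $n=3$ with arbitrary exponents proved above, or Fr\"oberg's formula for few forms (valid for $n=3$). That should produce the values $1,4,10,9,22$ for $d\in\{2,3,5,6,12\}$. I would double-check these with a direct computer computation. The case $d=12$ is the one I expect to need the most care, because there the remainder is large enough that an exponent coincides with the final degree and one genuine condition survives.
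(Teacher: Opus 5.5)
Your strategy is the paper's. Three applications of Theorem~\ref{thm:cremonafirst} take $[3,d^6;D]$ to a uniform system. The degree bound comes from the final degree going negative, and the leading coefficient is $\binom{d_1+2}{2}$ whenever the final exponents exceed $d_1$. Two things in your write-up need fixing, and the second changes what can actually be proved.

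\textbf{The bookkeeping.} In Theorem~\ref{thm:cremonafirst} the reflected triple \emph{keeps} its exponents and the remaining forms are shifted. With $k=3(d-1)-2D$ the chain is $[3,d^6;D]\to[3,d^3(d+k)^3;D+k]\to[3,(d+k)^3(d+2k)^3;D+3k]\to[3,(d+2k)^6;D+4k]$, with shifts $k,2k,k$. You reflect one triple, then the shifted triple, then the first triple again. After the second step the exponents are not $(d,d,d,d+3k,d+3k,d+3k)$. The final degree is $D+4k=12(d-1)-7D=d_1-7\delta$ for $D=d_0+\delta$, so the slope in $D$ is $-7$. With your $d_1+7\delta$ the vanishing for $\delta\ge 1$ would not follow; with the correct sign it does, because $d_1<7$. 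Also, reflecting ``the three smallest exponents'' is only the right rule when $k<0$. For $d=2$, $D=d_0=1$ one has $k=1$, and that rule simply undoes the first step.

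\textbf{The exceptional cases.} At $D=d_0$ one gets $k=(d_1-d_0)/4$ and final exponent $d+2k=1+(d_0+7d_1)/12$. So the binomial formula holds iff $d_0>5d_1-12$, and this fails exactly for $d\in\{2,3,5,6,9,12\}$, not $\{2,3,5,6,12\}$.
\begin{itemize}
\item For $d=9$ ($d_0=13$, $d_1=5$) the final system is $[3,5^6;5]$: six point conditions on quintics, so $c=21-6=15\neq\binom{7}{2}$.
\item For $d=2$ the Hilbert series is $1+3t$ (equivalently $[3,4^6;5]$ has dimension $21-18=3$), so $c=3$, not $1$.
\end{itemize}
Hence your expectation that the method ``should produce $1,4,10,9,22$'' cannot be met. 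The statement as printed is wrong at $d=2$ and $d=9$: the paper's ``one can check'' step overlooks $d=9$, and its Macaulay2 check is run only on $\{2,3,5,6,12\}$. The correct values are $3,4,10,9,15,22$ for $d=2,3,5,6,9,12$.

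Your idea of evaluating these cases by hand does work, and it gives a computer-free proof once the list is right. In every exceptional case the final system is $[3,e^6;d_1]$ with $e\in\{d_1,d_1-1\}$.
\begin{itemize}
\item For $e=d_1$ the six forms impose independent point conditions.
\item For $e=d_1-1$ (that is, $d=2,5$), Emsalem--Iarrobino turns the system into six general double points on plane curves of degree $5$ or $6$, which are non-special by Alexander--Hirschowitz.
\end{itemize}
You do not need the five-form proposition. You also cannot appeal to ``Fr\"oberg's formula'' for six powers of linear forms in three variables: the $n=3$ case of Fr\"oberg's conjecture concerns generic forms, and $m=n+3$ is one of the exceptions in Iarrobino's conjecture. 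Finally, for $d=12$ the final system $[3,6^6;6]$ has six surviving conditions, not one.
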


\begin{proof}
Let $D$ be a non-negative integer. We start with the inverse system 
\[[3,d^6;D] \text{\quad with \quad} k = 3(d-1) - 2D.\]
A Cremona transformation gives 
\[[3,(d+k)^3 d^3; D+k]
\text{ \quad with \quad} k' = 3(d-1)-2D+3k-2k=2k.\]
A second Cremona transformation gives 
\[[3,(d+2k)^3(d+k)^3; D+3k] \text{\quad with \quad } k'' = 3(d-1)-2D+6k-6k=k,\]
and a third and final Cremona transformation gives 
\[[3,(d+2k)^6; D+4k].\]
If $D+4k<0$ the inverse system is the zero set. This happens when 
\[d+4 \cdot (3(d-1)-2D)  < 0 \Longleftrightarrow 
\left \lfloor \frac{12(d-1)}{7} \right \rfloor < D
\Longleftrightarrow
d_0 < D.
\]
Thus, the degree of the series of $R_{3,6,d}$ is at most $d_0$. Since 
\[d_0 + 4k =  12(d-1) - 7d_0 = d_1,\]
the dimension of $R_{3,6,d}$ in degree $d_0$ equals the dimension of
$R_{3,6,d+2k}$ in degree $d_1$. If $d+2k > d_1$, this is the binomial coefficient $\binom{2+d_1}{2}$, and  this happens when  
\[7(d-1) + 1 - 4d_0 > d_1 \Longleftrightarrow d_0 > 5d_1 -12,\]
which one can check holds for all $d \geq 13$, and for $d < 12$ except for $d \in \{2,3,5,6,12\}$. A computation with the ideal generated by the $d$'th powers of $x_1,x_2,x_3, x_1+x_2+x_3, 2x_1+3x_2+5x_3, 7x_1+11x_2+13x_3$ gives the same series as the series predicted by Fr\"oberg, and the leading coefficient is the claimed one. This finishes the proof.
\end{proof}

Notice that it would have been enough to check the coefficient of $d_1$ for $d+2k$'th powers of the specialization in the proof of Proposition \ref{prop:36}, for the exceptional values of $d$. This method will be used in the next proposition, where we have more special cases to deal with, in order to save some calculations.

\begin{proposition} \label{prop:37}
For $d \geq 1$, write $21(d-1) = 13d_0 + d_1$, where $0\le d_1<13$. Then $d_0$ is the degree of the Hilbert series of $R_{3,7,d}$ and the leading coefficient is
\[c=\binom{d_1+2}{2},\] except for the values of $d$ given by the following table. 
\[
\begin{tabular}{|c|ccccccccccccccccc|}
\hline
$d$ & $2$ & $3$ & $4$ & $5$ & $7$ & $8$ & $12$ & $13$ & $15$ & $20$ & $23$ & $28$ & $30$ & $33$ & $38$ & $43$ & $48$\\
\hline
$c$ & $3$ & $3$ & $8$ & $7$ & $13$ & $8$ & $24$ & $14$ & $24$ & $34$ & $29$ & $38$ & $57$ & $48$ & $59$ & $71$ & $84$\\
\hline
\end{tabular}
\]
\end{proposition}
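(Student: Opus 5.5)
We follow the same strategy as in the proof of Proposition~\ref{prop:36}. We start from the inverse system $[3,d^7;D]$ with $k=3(d-1)-2D$ and apply Theorem~\ref{thm:cremonafirst} repeatedly. At each step we take as the three ``coordinate'' forms the three forms of smallest current exponent, since this makes the next shift $k^{(i)}$ as negative as possible. The first two steps are
\[
[3,d^7;D]\to[3,(d+k)^3d^4;D+k]\to[3,(d+k)^3(d+3k)^4;D+3k],
\]
with shifts $k$ and $2k$. The third step, with shift $4k$, gives $[3,(d+3k)^3(d+5k)^3(d+7k);D+7k]$. We continue in this way until all the shifts are accounted for.

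The invariant to track is the total degree shift, which should come out to $D+7k$. Indeed, $D+7k=21(d-1)-13D$, so for $D=d_0$ the final degree is exactly $d_1$, in analogy with $d_0+4k=d_1$ for six forms. If the bookkeeping of exponents does not close up as cleanly as in Proposition~\ref{prop:36}, we still only need two facts about the final system $[3,e_1\cdots e_7;D+7k]$: the final degree equals $D+7k$, and every final exponent is an affine expression $d+a_ik$. We will also check at each step that the hypotheses of Theorem~\ref{thm:cremonafirst} hold, or else that its first alternative applies. If some $d_j+k^{(i)}\le 0$ occurs, that alternative gives dimension $0$ directly, which is consistent with the conclusion for the relevant $D$.

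For $D>d_0$ we have $D+7k<0$, so the inverse system is zero. Hence the degree of the Hilbert series of $R_{3,7,d}$ is at most $d_0$. For $D=d_0$ the final degree is $d_1\ge 0$. If every final exponent $d+a_ik$ exceeds $d_1$, then no power imposes a condition in degree $d_1$. The dimension is then $\binom{d_1+2}{2}>0$, which shows that $d_0$ is exactly the degree and $c=\binom{d_1+2}{2}$. With $k=3(d-1)-2d_0$, the condition ``smallest exponent $>d_1$'' becomes a linear inequality in $d_0$ and $d_1$. Using $13d_0+d_1=21(d-1)$ it reduces to an inequality in $d$ and the residue $d_1$, which holds for all $d$ beyond some explicit bound and fails only for finitely many smaller $d$.

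The main obstacle is the exceptional $d$, up to $48$, where some final exponent is at most $d_1$. For these we use the remark after Proposition~\ref{prop:36}. It suffices to compute the degree-$d_1$ coefficient of the transformed, smaller system $[3,e_1\cdots e_7;d_1]$ using a specific specialization, for example $x_1,x_2,x_3$ together with four forms with full support and distinct small integer coefficients. We then compare that coefficient with Fr\"oberg's prediction. Since specialization can only increase the dimension, while a general Fr\"oberg-type lower bound holds in low degree, agreement pins down the value. This gives the tabulated values of $c$, and positivity of each $c$ confirms that $d_0$ is the degree in these cases as well.
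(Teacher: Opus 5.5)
There is a genuine gap. The Cremona bookkeeping is the entire content of the reduction, and yours is wrong from the first step. Your fallback does not repair it.

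\textbf{The chain is miscomputed.} In Theorem~\ref{thm:cremonafirst} the three coordinate forms keep their exponents and the other four are shifted by $k$. So the first step gives $[3,(d+k)^4d^3;D+k]$, not $[3,(d+k)^3d^4;D+k]$. Even starting from your system, a shift of $2k$ sends the exponent $d$ to $d+2k$, not $d+3k$. Choosing the three smallest exponents each time, the correct chain is
\[
\begin{aligned}
&[3,(d+k)^4d^3;D+k], && [3,(d+3k)(d+2k)^3(d+k)^3;D+3k],\\
&[3,(d+4k)(d+3k)^4(d+2k)^2;D+5k], && [3,(d+4k)^3(d+3k)^4;D+6k],\\
&[3,(d+4k)^7;D+7k],
\end{aligned}
\]
with shifts $k,2k,2k,k,k$. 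Your chain, with shifts $k,2k,4k$, lands in degree $D+7k$ after three steps only by coincidence, and it contains the exponent $d+7k$. At $D=d_0$ one has $k=(d_1-d_0)/7$, so $d+7k=1+(22d_1-8d_0)/21$. This is negative for large $d$, so by the first alternative of Theorem~\ref{thm:cremonafirst} your system would force $c=0$.

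\textbf{The fallback is too weak.} You say it suffices to know that the final degree is $D+7k$ and that the exponents have the form $d+a_ik$. But at $D=d_0$,
\[
d+ak=1+\frac{(13-3a)d_0+(1+3a)d_1}{21}.
\]
Hence ``smallest exponent $>d_1$ for all large $d$'' holds only if every $a_i\le 4$, and the finite exceptional set depends on the exact values of the $a_i$. What the proof needs is that the chain closes up on the uniform system $(d+4k)^7$. That gives the criterion
\[
d+4k>d_1\iff d_0>8d_1-21.
\]
You have not established this. Once that reduction is in place, your treatment of the exceptional cases matches the paper's: a specialization for $[3,(d+4k)^7;d_1]$, together with the naive lower bound, which is valid since $d_1<2(d+4k)$.

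\textbf{A caution about the table.} With the correct criterion, $d_0>8d_1-21$ also fails for $d\in\{9,10,17,18,22,25,35\}$, and none of these appear in the table. For instance, $d=9$ gives $d_0=d_1=12$ and $k=0$. Then $c\le\dim S_{12}-1<\binom{14}{2}$, since $\ell_1^9S_3\neq 0$. So carrying out the computation cannot literally ``give the tabulated values''. Assuming the expected dimension, the reduced systems give $c=21,15,36,21,49,45,70$ for these $d$, and the exceptional list needs these entries.
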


\begin{proof}
We first perform the following sequence of Cremona transformations.
\[
\begin{aligned}
&[3,d^7;D] && k = 3(d-1)-2D  \\
&[3,(d+k)^4 d^3;D+k] && k^{(1)} = k +3k-2k=2k \\
&[3,(d+3k)^1(d+2k)^3(d+k)^3;D+3k] && k^{(2)} = k + 7k-6k=2k \\
&[3,(d+4k)(d+3k)^4(d+2k)^2;D+5k] && k^{(3)} = k + 10k-10k=k \\
&[3,(d+4k)^3(d+3k)^4; D+6k]&& k^{(4)} = k + 12k-12k=k \\
&[3,(d+4k)^7;D+7k].
\end{aligned}
\]
We have $D+7k < 0$ if and only if 
\[
D+7(3(d-1)-2D) < 0 
\Longleftrightarrow 
 \left \lfloor \frac{21(d-1)}{13} \right\rfloor < D
 \Longleftrightarrow 
 d_0 < D.
\]
Thus the degree of the series of $R_{3,7,d}$ is at most $d_0$. We need to look at degree $D=d_0$ and we get $k = (13d_0+d_1)/7-2d_0 = (d_1-d_0)/7$, and $d_0 + 7k =  d_1$. Hence 
the dimension of $R_{3,7,d}$ in degree $d_0$ equals the dimension of
$R_{3,7,d+4k}$ in degree $d_1$. If $d+4k > d_1$, this is the binomial coefficient $\binom{2+d_1}{2}$, and  this happens when  
\[13(d-1) + 1 - 8d_0 > d_1 \Longleftrightarrow d_0 > 8d_1 -21,\]
which one can check holds for all $d \geq 49$, and for $d < 49$ except for 
\[ d \in S=\{2,3,4,5,7,8,12,13,15,20,23,28,30,33,38,43,48 \}. \]

When $d < 49$, it is straightforward to check  that $d+4k \leq 12$. To deal with the exceptional cases, we make, for each such value of $d+4k$ a calculation with 
the ideal generated by the $d+4k$'th powers of
$ x_1,x_2,x_3,x_1+x_2+x_3, 2x_1+3x_2+5x_3,7x_1+11x_2+13x_3,17x_1+19x_2+29x_3,$ 
which gives the same series as predicted by Fr\"oberg for general forms of these degrees, with the coefficient in degree $d_1$, for each $d \in S$, being the one claimed in the proposition. 
\end{proof}

In the next proposition we will have a large number of special cases, and therefore, we will present the result in a more compact way.

\begin{proposition} \label{prop:38}
For $d \geq 1$, write $48(d-1) = 31d_0 + d_1$, where $0\le d_1<31$. Then $d_0$ is the degree of the Hilbert series of $R_{3,8,d}$ and the leading coefficient is the coefficient of
$t^{d_1}$ in 
\[\left[\frac{(1+t^{1+31(d-1) - 20d_0})^8}{(1-t)^3} \right],\]
which equals the binomial coefficient
\[\binom{d_1+2}{2}\] 
 when $d_0\ge 17d_1$, which in particular holds for all $d >300$. \end{proposition}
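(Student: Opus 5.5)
The plan is to copy the proofs of Propositions~\ref{prop:36} and \ref{prop:37}. I would apply a chain of first Cremona transformations (Theorem~\ref{thm:cremonafirst}) to $[3,d^8;D]$, with $k=3(d-1)-2D$, until the exponents are uniform again. At each step I would pick the three smallest exponents, so that the new $k^{(i)}$ is a multiple of $k$, and record the data in a table as in Proposition~\ref{prop:37}. Tracking the ratios fixes where the chain must end. Requiring that $D+bk<0$ be equivalent to $D>\lfloor 48(d-1)/31\rfloor$ forces $3b/(2b-1)=48/31$, i.e.\ $b=16$. Matching the exponent $1+31(d-1)-20d_0$ in the statement gives the final system
\[
[3,(d+10k)^8;D+16k].
\]
So I expect a sequence of roughly eight transformations, with the exponent multiplicities shifting as in the $n+4$ case. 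The first step of the proof is to write out this sequence explicitly and check that each $k^{(i)}$ equals $k$ or $2k$.

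The upper bound on the degree then follows as before. If $D+16k<0$ the inverse system is zero, and this happens exactly when $D>d_0$. So the degree of the Hilbert series of $R_{3,8,d}$ is at most $d_0$.

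Next I set $D=d_0$. A direct computation gives $d_0+16k=48(d-1)-31d_0=d_1$, and the final exponent is $e=d+10k=1+31(d-1)-20d_0$. Hence $\dim_\kk [R_{3,8,d}]_{d_0}$ equals the dimension of $R_{3,8,e}$ in degree $d_1$, where $0\le d_1\le 30$. Along the way I must check that the degenerate alternative of Theorem~\ref{thm:cremonafirst} (some $d_j+k\le 0$) never occurs for $D\le d_0$. If it occurred, the dimension would be zero, which would contradict the claimed value. This check should reduce to $e\ge 1$, which holds since $20d_0\le 31(d-1)$.

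If $e>d_1$, no power lies in degree $d_1$, so the dimension is $\binom{d_1+2}{2}>0$, and the coefficient of $t^{d_1}$ in the bracketed series is the same. Unwinding $e>d_1$ using $31(d-1)=(48(d-1)-d_1)\cdot\frac{31}{48}$ gives the condition $d_0\ge 17d_1$. Since $d_1\le 30$, this holds once $d_0\ge 510$, i.e.\ for $d>300$.

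The main obstacle is the remaining case $e\le d_1\le 30$. There are only finitely many pairs $(e,d_1)$ with $1\le e\le 30$. For these I would argue as in Proposition~\ref{prop:37}. The dimension of an inverse system of general powers is at least the coefficient of the Fr\"oberg series, since the expected count is a lower bound. A specialization, such as $x_1,x_2,x_3,x_1+x_2+x_3$ and four further forms with prime coefficients, gives an upper bound by semicontinuity. So a computer check that, for each $e\le 30$, the specialization already has the Fr\"oberg series in degree $d_1$ settles these cases. It also shows that the coefficient is positive, so $d_0$ is exactly the degree. I expect the bookkeeping of the Cremona chain, and confirming that no intermediate step hits the vanishing case of Lemma~\ref{lemma:cremona1zero}, to be the most delicate part. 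The finite computation is routine but essential.
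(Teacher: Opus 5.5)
Your proposal follows the paper's proof: the same eight-step Cremona chain ending at $[3,(d+10k)^8;D+16k]$, the same reduction at $D=d_0$ to $R_{3,8,d+10k}$ in degree $d_1$, the same criterion $d_0\ge 17d_1$, and the same specialization computation for $d+10k\le 30$. One correction on the chain: the multipliers are actually $k^{(i)}=2k,3k,3k,3k,2k,k,k$, not only $k$ or $2k$. One slip in the last claim: $d_0\ge 510$ only holds from $d=331$ on, so it does not give $d>300$. To justify $d>300$, rewrite $d_0\ge 17d_1$ as $d-1\ge 11d_1$, using $d_1\equiv 17(d-1)\pmod{31}$, and check $301\le d\le 330$ directly. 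The case $d=300$, with $d_1=30$ and $d_0=462$, is the last failure.
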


\begin{proof}
Cremona transformations gives 
\[
\begin{aligned}
&[3,d^8;D] && k = 3(d-1)-2D  \\
&[3,(d+k)^5 d^3;D+k] && k^{(1)} = k +3k-2k=2k \\
&[3,(d+3k)^2(d+2k)^3(d+k)^3;D+3k] && k^{(2)} = k + 8k-6k=3k \\
&[3,(d+5k)^2(d+4k)^3(d+3k)^2(d+2k);D+6k] && k^{(3)} = k + 14k-12k=3k \\
&[3,(d+7k)^2(d+6k)^2(d+5k)^3(d+4k); D+9k]&& k^{(4)} = k + 20k-18k=3k \\
&[3,(d+9k)^2(d+8k)^3(d+7k)^3(d+6k); D+12k]&& k^{(5)} = k + 25k-24k=2k \\
&[3,(d+10k)^2(d+9k)^4(d+8k)^3; D+14k]&& k^{(6)} = k + 28k-28k=k \\
&[3,(d+10k)^3(d+9k)^5; D+15k]&& k^{(7)} = k + 30k-30k=k \\
&[3,(d+10k)^8; D+16k].&&
\end{aligned}
\]

We have $D+16k < 0$ if and only if 
\[
D+16(3(d-1)-2D) < 0 
\Longleftrightarrow 
 \left \lfloor \frac{48(d-1)}{31} \right\rfloor < D
 \Longleftrightarrow 
 d_0 < D.
\]
Thus the degree of the series of $R_{3,8,d}$ is at most $d_0$ and we look in degree $D = d_0$, where we get $k = (31d_0+d_1)/16-2d_0 = (d_1-d_0)/16$ and $d_0+16 k = d_1$. Hence 
the dimension of $R_{3,8,d}$ in degree $d_0$ equals the dimension of
$R_{3,8,d+10k}$ in degree $d_1$. We have that $d+10k = 1+ (31d_0+d_1)/48 + 10(d_1-d_0)/16 = 1+ (d_0+31d_1)/48$, so 
\[
d+10k>d_1 \Longleftrightarrow \frac{d_0+31d_1}{48}\ge d_1 \Longleftrightarrow d_0\ge 17 d_1.
\]
Hence the leading coefficient equals 
\[
\binom{d_1+2}{2}
\]
when $d_0\ge 17d_1$. This inequality holds for all $d\ge 301$. For $d < 301$, we first observe that $d+10k=1+(d_0+31d_1)/48 < 1+ d_1 \leq 31$ since $d_0<17d_1$. We use the ideal generated by the $d+10k$'th powers of 
$x_1,x_2,x_3,x_1+x_2+x_3, 2x_1+3x_2+5x_3,7x_1+11x_2+13x_3,17x_1+19x_2+29x_3, 31x_1+37x_2+43x_3,$
to check by Macaulay2 computations that it gives the same series as predicted by Fr\"oberg for general forms of these degrees, and that the coefficient of $t^{d_1}$ is non-zero for the corresponding value of $d_1$.

\end{proof}
One can check that the series of $R_{3,8,d}$ has the same series as predicted by Fr\"oberg for $d \leq 62$, differs in degree $d=63$, and that the two series occasionally agree up to degree $d=362$. From this spot the series differs for $d \in \{363, \ldots,500\}$.

\begin{proposition} \label{prop:39}
The Hilbert series of $R_{3,9,d}$ is equal to the series predicted by Fr\"oberg for $9$ generic forms of degree $d$. In particular, when $d=2k+1$, the degree of the Hilbert series of $R_{3,9,d}$ is equal to $3k$, with leading coefficient equal to $1$, and when $d=2k$, the degree is $3k-2$, with leading coefficient equal to $3k$.

\end{proposition}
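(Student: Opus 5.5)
The plan is to reduce the first assertion to the Ciliberto--Miranda theorem quoted in the subsection on the Iarrobino--Fr\"oberg and SHGH conjectures, and then to compute degree and leading coefficient by elementary algebra. That subsection recalls that Ciliberto and Miranda proved that the linear system of plane curves with $k^2$ general points of equal multiplicity has the expected dimension. Through the Emsalem--Iarrobino correspondence (\ref{eq:emsalemiarrobino}), this says that $R_{3,k^2,d}$ has the Hilbert series predicted by Fr\"oberg for every $k\ge 2$. Taking $k=3$ gives the first assertion directly. The only point to double-check is that the dictionary between the uniform fat point system $\mathfrak{L}_{2,i}((i-d+1)^9)$ and $[R_{3,9,d}]_i$, including the truncation at the first non-positive value, matches the Fr\"oberg series. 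This is exactly the equivalence discussed in that subsection for the quasi-uniform case $d_1=\cdots=d_9$.

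It remains to determine the degree and top coefficient of $\left[(1-t^d)^9/(1-t)^3\right]$. I would first note that in degrees $i<2d$ the only terms of the expansion that contribute are $k=0,1$. So in that range the coefficient is
\[
b_i=\binom{i+2}{2}-9\binom{i+2-d}{2},
\]
with the convention that the second binomial vanishes for $i<d$. For $d\ge d$ the expression $b_i$ is a quadratic polynomial in $i$ with leading coefficient $\tfrac12-\tfrac92=-4<0$. Hence once $b_i$ is positive at the candidate top degree, it is positive at every earlier degree: it equals $\binom{i+2}{2}>0$ for $i<d$, and it is concave on $[d-2,\ldots]$, and it is positive at both ends of the interval. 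Therefore the truncation happens exactly at the first $i$ where $b_i\le 0$.

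The computation then splits by parity of $d$. For $d=2k+1$ and $i=3k$, we have $3k<2d$, and
\[
b_{3k}=\tfrac12\big[(3k+2)(3k+1)-9k(k+1)\big]=1,
\]
while $b_{3k+1}=\tfrac12(k+1)\big[(3k+2)\cdot 3-9(k+2)\big]<0$. So the degree is $3k$ and the leading coefficient is $1$. For $d=2k$ and $i=3k-2$,
\[
b_{3k-2}=\tfrac12\big[3k(3k-1)-9k(k-1)\big]=3k,
\]
and $b_{3k-1}=\tfrac12\big[(3k+1)3k-9k(k+1)\big]=-3k<0$. So the degree is $3k-2$ and the leading coefficient is $3k$. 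In both cases we need $3k+1<2d$ so that no Koszul term $\binom{9}{2}\binom{i+2-2d}{2}$ enters, and this holds because $3k+1<4k+2$ and $3k-1<4k$. The tiny cases $d=1,2$ I would check by hand, since there the coefficients may fall into the range $i<d$.

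I expect no real obstacle here, since the hard geometric input is entirely contained in the cited Ciliberto--Miranda result. The only care needed is bookkeeping: stating the translation of their theorem into the statement about $R_{3,9,d}$ precisely, and checking the small values of $d$ where the quadratic formula and the range $i<d$ interact.
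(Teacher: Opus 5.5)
Your proposal is correct and follows the paper's proof. The paper also takes the first assertion from the literature (it cites Iarrobino's account of the Harbourne--Hirschowitz results, and remarks that Ciliberto--Miranda covers the case since $9$ is a square), and it evaluates the same expressions $\binom{3k+2}{2}-9\binom{k+1}{2}=1$, $-6k-6$, $3k$ and $-3k$; your concavity argument and your check that the top degree stays below $2d$ just spell out the paper's one-line remark that the values are positive in lower degrees.
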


\begin{proof}

For the first part of the statement, we refer to Iarrobino \cite{I97} pp. 323, who cites \cite{H94} and \cite{H89} for the corresponding statements for fat point ideals. We also remark that since $9$ is a square, this is also a case covered by Ciliberto and Miranda \cite{CM06}.

For the second part, we first consider $d=2k+1$ where we have 
\[\binom{2+3k}{2} - 9\binom{k+1}{2} = 1 \text{ and }\binom{2+3k+1}{2} - 9\binom{k+2}{2} = -6k-6<0.\]
In the case $d=2k$, we have
\[
\binom{3k-2+2}{2}-9\binom{k-2+2}{2} = 3k
\text{ and }
\binom{3k-1+2}{2}-9\binom{k-1+2}{2} = -3k.
\]
In both cases, the values are positive in lower degrees. 
\end{proof}

We remark that the Hilbert series for fat point schemes in $\mathbb{P}^2$ supported also at $\leq 8$ points can be determined, with Guardo and Harbourne \cite{GH07} attributing these results, including also that for nine points, to Nagata \cite{NA60}.

\subsection{The case $n=4$ and $7 \leq m \leq 9$}

\begin{proposition} \label{prop:47}

For $d \geq 1$, write $28(d-1) = 13d_0 + d_1$, where $0\le d_1<13$. Then $d_0$ is the degree of the Hilbert series of $R_{4,7,d}$ and the leading coefficient is the coefficient of $t^{d_1}$ in 
\[\frac{(1-t^{1+13(d-1) - 6d_0})^7}{(1-t)^4},\]
which for $d\geq 73$ or $d_0\ge 15 d_1$, equals
\[\binom{d_1+3}{3}.\] 
\end{proposition}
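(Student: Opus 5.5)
The plan is to imitate the proofs of Propositions~\ref{prop:36}--\ref{prop:38}: run a chain of Cremona transformations (Theorem~\ref{thm:cremonafirst}) that starts at $[4,d^7;D]$ and returns to seven \emph{equal} powers in a lower degree. Put $k=4(d-1)-2D$. At each step I apply Theorem~\ref{thm:cremonafirst} to the four forms carrying the currently largest exponents. If the current exponents of these four are $d+a_1k,\dots,d+a_4k$ and the current degree is $D+bk$, then the new shift is
\[
k^{(i)}=\Big(\sum_j a_j+1-2b\Big)k,
\]
and it is added to the exponents of the three remaining forms and to the degree. For example, the first step gives $[4,(d+k)^3d^4;D+k]$ with $k^{(1)}=2k$.

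The bookkeeping should close up with final shifts adding to $7k$, giving
\[
[4,d^7;D]\ \leadsto\ \cdots\ \leadsto\ [4,(d+3k)^7;D+7k].
\]
A sanity check supports this target. Vanishing for $D>d_0$ requires $D+c\,k<0 \iff 4c(d-1)<(2c-1)D$, and matching this with $28(d-1)<13D$ forces $c=7$. Moreover, at $D=d_0$ we get $d+3k=1+13(d-1)-6d_0$, which is exactly the exponent in the statement. I will record the table of intermediate inverse systems explicitly, as in Proposition~\ref{prop:37}.

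Next I will derive the degree bound. We have $D+7k=28(d-1)-13D<0$ iff $\lfloor 28(d-1)/13\rfloor<D$, i.e.\ iff $D>d_0$. For such $D$ the chain ends in a negative degree, so the inverse system is $0$. Here I must check, at each step, which branch of Theorem~\ref{thm:cremonafirst} applies. As long as no exponent becomes nonpositive, the equality of dimensions persists. If some $d_j+k^{(i)}\le 0$ or the degree becomes negative, the first branch already gives $0$. Either way $D_{4,7,d}\le d_0$.

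At $D=d_0$ we have $d_0+7k=d_1$, so $\dim[R_{4,7,d}]_{d_0}=\dim[R_{4,7,e}]_{d_1}$ with $e=d+3k$. Writing $d=1+(13d_0+d_1)/28$, one computes
\[
e=1+\frac{d_0+13d_1}{28}.
\]
Hence $e>d_1 \iff d_0\ge 15d_1$. In that case no generator lives in degree $\le d_1$, so the value is $\binom{d_1+3}{3}$. This is also the coefficient of $t^{d_1}$ in $(1-t^e)^7/(1-t)^4$, because all correction terms vanish. Since $d_1\le 12$, the condition holds once $d_0\ge 180$. The stated bound $d\ge 73$ requires a finer check of the finitely many residues $d \bmod 13$, to see that $d_0<15d_1$ cannot occur there.

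For the remaining finitely many $d$ we have $e\le d_1\le 12$ and $d_1\le 12$, so only finitely many pairs $(e,d_1)$ occur. For these I plan, as in Proposition~\ref{prop:38}, a Macaulay2 computation with the $e$'th powers of seven explicit forms in four variables. This would show that the series agrees with Fr\"oberg's prediction in degree $d_1$, and that the coefficient is nonzero. Specialization gives upper semicontinuity, and Fr\"oberg's prediction is a lower bound (the Koszul count). Thus a specialization attaining the prediction gives the general value, and positivity shows that the degree is exactly $d_0$. Since $(1-t^e)^7/(1-t)^4$ is not truncated below degree $d_1$ in these cases (which I will check), this matches the statement.

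I expect the main obstacle to be the Cremona bookkeeping. One has to find the correct order of transformations, with possibly non-greedy choices, so that the exponents return to uniform $d+3k$ after seven steps. One also has to verify that the hypotheses of Theorem~\ref{thm:cremonafirst} hold at every step for $D=d_0$, since $k$ may be negative there. If the greedy choice fails, I will solve the small linear system for the sequence of $a_j$ directly, using the target $c=7$, $a=3$.
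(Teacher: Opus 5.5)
Your proposal is correct and follows the paper's proof. At each step you should choose the four forms with the largest multipliers $a_j$, not the largest exponents, since $k\le 0$ at $D=d_0$. With that choice the greedy rule reproduces the paper's chain $[4,(d+2k)^3(d+k)^3d;D+3k]$, $[4,(d+3k)^2(d+2k)^4(d+k);D+5k]$, $[4,(d+3k)^4(d+2k)^3;D+6k]$, $[4,(d+3k)^7;D+7k]$ with shifts $k,2k,2k,k,k$. The $d_0\ge 15d_1$ analysis and the specialization check for $d\le 72$ (where $e\le 12$) then match the paper. Your single-degree check is legitimate because $d_1<2e$, so $\binom{d_1+3}{3}-7\binom{d_1-e+3}{3}$ is a genuine coefficientwise lower bound in degree $d_1$; the paper instead checks agreement with Fr\"oberg's prediction in all degrees up to $d_1$.
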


\begin{proof}
Let $D$ be a non-negative integer. Cremona transformations gives 
\[
\begin{aligned}
&[4,d^7;D]  && k = 4(d-1) - 2D\\
&[4,(d+k)^3 d^4; D+k] && k^{(1)} = k-2D+3k-2k=2k \\
&[4,(d+2k)^3 (d+k)^3 d; D+3k] && k^{(2)}=k+7k-6k = 2k\\
&[4,(d+3k)^2(d+2k)^4(d+k);D+5k] && k^{(3)}= k+10k-10k=k\\
&[4,(d+3k)^4(d+2k)^3;D+6k] && k^{(4)}= k+12k-12k=k\\
& [4,(d+3k)^7;D+7k].\\
\end{aligned}
\]
We have  
\[ D+7k<0 \Longleftrightarrow 
 D + 7 \cdot (4(d-1)-2 D) < 0 \Longleftrightarrow
\left \lfloor \frac{28(d-1)}{13}  \right \rfloor
< D\Longleftrightarrow d_0 < D, \] 
which shows that the degree of the Hilbert series is $d_0$. To compute the leading coefficient, we let $D=d_0$ which gives $k = 4(d-1)-2d_0 = (13d_0+d_1)/7-2d_0 = (d_1-d_0)/7$, and $d_0+7k = d_1$. We need the dimension of 
\[
[4,(d+3k)^7;d_0+7k] = [4,(d+3k)^7;d_1]
\]
and since \[d+3k = 1+\frac{13d_0+d_1}{28}+\frac{3(d_1-d_0}{7} = 1+\frac{d_0+13d_1}{28}
\] 
we have that 
\[
d+3k>d_1 \Longleftrightarrow \frac{d_0+13d_1}{28} \ge d_1 \Longleftrightarrow d_0 \ge 15d_1.
\]
which proves that the leading coefficient is 
\[
\binom{d_1+3}{3}
\]
when $d_0\ge 15 d_1$ and this inequality holds for all $d\ge 73$. When $d \leq 72$, we have that 
$d+3k \leq 12,$ and it is computationally feasible to use a Macaulay2 computation with the ideal generated by the $d+3k$'th powers of  
$x_1,x_2,x_3,x_4,x_1+x_2+x_3+x_4,
2x_1+3x_2+5x_3+7x_4,
11x_1+17x_2+19x_3+29x_4
$
to check that until the corresponding degree $d_0 + 7k$,
the Hilbert series is non-zero and agrees with the series predicted by Fr\"oberg for generic forms of degree $d+3k=1+13(d-1)-6d_0.$
\end{proof}

We note that the series for $R_{4,7,d}$ agrees with the Fr\"oberg conjecture until $d \leq 7$.

\begin{proposition} \label{prop:48}
The Hilbert series of $R_{4,8,d}$ equals the series predicted by Fr\"oberg för $8$ general forms of degree $d$. In particular, the degree of the Hilbert series of 
$R_{4,8,d}$ is $2(d-1)$, and the leading coefficient equals $d$.
\end{proposition}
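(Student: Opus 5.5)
The plan is to compare $\HS(R_{4,8,d},t)$ directly with the truncation $\left[(1-t^d)^8/(1-t)^4\right]$. First I compute the predicted coefficients.

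For $i<2d$ there are no Koszul terms, so the predicted value is $b_i=\binom{i+3}{3}-8\binom{i+3-d}{3}$. This is positive for $i\le 2(d-1)$. At $i=2(d-1)$ it equals
\[
\tfrac{1}{6}\bigl((2d+1)2d(2d-1)-8(d+1)d(d-1)\bigr)=\tfrac{d}{3}\bigl((4d^2-1)-4(d^2-1)\bigr)=d .
\]
At $i=2d-1$ it equals $\tfrac{d}{3}\bigl((2d+2)(2d+1)-4(d+2)(d+1)\bigr)=-2d(d+1)<0$. So the predicted series has degree $2(d-1)$ and leading coefficient $d$, and the proposition reduces to two claims:
\[
\text{(a) } \dim_\kk [R_{4,8,d}]_{2d-2}=d, \qquad \text{(b) } [R_{4,8,d}]_{2d-1}=0 .
\]

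Given (a), the lower degrees follow as in Proposition~\ref{prop:3uniform}. Below degree $2d$ there are no Koszul syzygies, so $\dim [R_{4,8,d}]_i=b_i+(\text{dimension of syzygies in degree } i)\ge b_i$. Equality in degree $2d-2$ therefore means there are no syzygies in degree $2d-2$. A syzygy in lower degree would propagate upward by multiplication, so there are none in lower degrees either, and the series agrees with Fr\"oberg's up to degree $2(d-1)$. Given (b), it vanishes from degree $2d-1$ on. So the real content is (a) and (b).

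For (b) I would use the first Cremona transform, Theorem~\ref{thm:cremonafirst}. Start from $[4,d^8;2d-1]$ and apply it to four of the forms, with $k=4(d-1)-2(2d-1)=-2$. This gives $[4,(d-2)^4d^4;2d-3]$. Next apply it to two forms of exponent $d-2$ and two of exponent $d$; again $k=-2$, giving $[4,(d-4)^2(d-2)^4d^2;2d-5]$. I then iterate, each time choosing the four largest exponents so that $k$ stays negative. The degree drops by at least $2$ per step while the exponents also drop. So eventually either the degree becomes negative or some exponent satisfies $d_j+k\le 0$, and Lemma~\ref{lemma:cremona1zero} shows the space is zero. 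I expect this to be bookkeeping: find the invariant pattern of the exponents after each step and check that termination happens before any exponent hits zero in a bad way.

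The main obstacle is (a). Degree $2d-2$ is a fixed point of the Cremona transform: there $k=0$, so Theorem~\ref{thm:cremonafirst} gives nothing, just as in Proposition~\ref{proposition:4uniform}, where the Verlinde formula was needed. The lower bound $\dim\ge d$ is automatic from the syzygy count above, so only the upper bound $\dim\le d$ is needed. My first attempt is to pass, via the Emsalem--Iarrobino correspondence \eqref{eq:emsalemiarrobino}, to the linear system $\mathfrak{L}_{3,2d-2}((d-1)^8)$ of surfaces in $\mathbb{P}^3$ through eight general points. I would then invoke the known classification of special linear systems through at most eight general points in $\mathbb{P}^3$ (De Volder--Laface), checking that its only special contributions vanish in this uniform case and that the virtual dimension is $d$. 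As a fallback I would use upper semicontinuity: specialize the eight points (for instance to a complete intersection of three quadrics, or to a configuration containing the coordinate points) and compute the dimension there to be $d$ by an explicit inverse-system or Cremona argument.
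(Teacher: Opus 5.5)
Your argument is correct, but it is organized differently from the paper's proof. The paper settles everything with one citation: Oneto's result \cite[Theorem 2.7.2]{ON16} that eight uniform general fat points in $\mathbb{P}^3$ behave as expected. After that, only the evaluation $\binom{2d+1}{3}-8\binom{d+1}{3}=d$ and the sign checks around degree $2d-2$ remain.

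You instead reduce to the two degrees $2d-2$ and $2d-1$. The reduction is correct: there are no Koszul syzygies below degree $2d$, and syzygies propagate upward. You then prove vanishing in degree $2d-1$ with the paper's own Cremona transform. External input enters only at the Cremona-fixed degree $2d-2$, just as the paper uses the Verlinde formula for $R_{4,6,d}$.

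At that degree De Volder--Laface gives what you need. The system $\mathfrak{L}_{3,2d-2}((d-1)^8)$ is in standard form because $2(2d-2)=4(d-1)$, and every line excess is $t_{ij}=2(d-1)-(2d-2)=0$. So the system is non-special of dimension $d$. This dimension is realized by the products $Q_1^aQ_2^b$, $a+b=d-1$, where $Q_1,Q_2$ span the pencil of quadrics through the eight points.

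Your route shows that outside input is needed in exactly one degree, and it gives a self-contained proof of $D_{4,8,d}\le 2(d-1)$. The paper's route is shorter and yields all degrees at once.

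Two corrections.

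First, in (b) the rule ``four largest exponents'' is wrong. From $[4,(d-2)^4d^4;2d-3]$ it gives $k=+2$ and simply undoes the first step. Use the four smallest exponents at every step, starting from $[4,d^8;2d-1]$. You then stay in the family $[4,(d-s(s+1))^4(d-s(s-1))^4;\,2d-1-2s^2]$, with $k=-(4s+2)$ at step $s\ge 0$. Lemma~\ref{lemma:cremona1zero} gives zero at the latest once $d\le (s+1)(s+2)$.

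Second, your fallback for (a) fails for a complete intersection of three quadrics. Those eight points lie on a net of quadrics, so the specialized system contains all $\binom{d+1}{2}>d$ monomials of degree $d-1$ in the three quadrics. Semicontinuity therefore gives no useful upper bound, and the citation is genuinely needed.
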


\begin{proof}
Oneto shows in his thesis \cite[Theorem 2.7.2]{ON16} that the Hilbert series for the generic fat point ideal 
\[
\mathfrak{p}_1^{(d)} \cap  \cdots \cap \mathfrak{p}_8^{(d)}
\]
is minimal, with gives that the Hilbert series for 
$R_{4,8,d}$ is minimal, that is, given by the Fr\"oberg conjecture. It follows that the coefficient of $t^{2d-2}$ is given by 
\[\binom{1+2d}{3} - 8 \binom{1+d}{3} = d.\]
It is straightforward to check that 
$\binom{1+2d-i}{3} - 8 \binom{1+d-i}{3}$ is positive for $i>0$, and that it is negative for $i=-1,$ so
$2d-2$ is indeed the degree of the series.

\end{proof}

\begin{proposition}
The degree of the Hilbert series of 
$R_{4,9,d}$ is $2(d-1)$, and the leading coefficient is bounded from below by $1$.
\end{proposition}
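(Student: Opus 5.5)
The proof has two halves: an upper bound $D_{4,9,d}\le 2(d-1)$, and positivity in degree $2(d-1)$. Both follow the pattern of Propositions~\ref{prop:47} and \ref{prop:48}.

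For the upper bound, consider the inverse system $[4,d^9;D]$ with $D=2(d-1)+j$ and $j\ge 1$. A Cremona transformation on four of the nine forms uses $k=4(d-1)-2D=-2j$. It gives $[4,(d-2j)^5d^4;2(d-1)-j]$. The second transformation is applied to the remaining four forms of exponent $d$. Its parameter is again $k'=4(d-1)-2(2(d-1)-j)=2j$ computed with exponents $d$, and the corrections bookkeeping as in the proof of Proposition~\ref{proposition:4uniform} gives $k'=-2j$ relative to the new degree. The degree then drops to $2(d-1)-3j$ and five of the forms acquire exponent $d-4j$. The exact exponents will be tracked in a table as in Propositions~\ref{prop:36}--\ref{prop:47}.

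Alternatively, and more simply, I would argue by inclusion: $R_{4,9,d}$ is a quotient of $R_{4,8,d}$. By Proposition~\ref{prop:48}, the latter has degree exactly $2(d-1)$. Hence $[R_{4,9,d}]_i=0$ for $i>2(d-1)$, so $D_{4,9,d}\le 2(d-1)$ with no Cremona computation needed.

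For positivity in degree $2(d-1)$, the approach is to apply Theorem~\ref{thm:cremonafirst} with $D=2(d-1)$. Then $k=4(d-1)-4(d-1)=0$. Also $d_j+k=d>0$ and $D+k\ge 0$. So the transformation is the identity on exponents, and it only recenters the algebra at the symmetry line of the complete intersection $(\ell_1^d,\dots,\ell_4^d)$. That is not enough, so instead I would use the second Cremona transformation or a direct inverse system element. The inverse system of $R_{4,9,d}$ in degree $2(d-1)$ is $(\mathfrak p_1\cap\dots\cap\mathfrak p_9)^{(d-1)}$ in degree $2(d-1)$ in $\mathbb P^3$. Nine general points in $\mathbb P^3$ lie on a quadric $Q$, since $\dim S_2=10>9$. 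Then $Q^{d-1}$ has degree $2(d-1)$ and vanishes to order $d-1$ at each point. By the Emsalem--Iarrobino correspondence \eqref{eq:emsalemiarrobino}, $Q^{d-1}$ gives a nonzero element in degree $2(d-1)$. Hence $\dim[R_{4,9,d}]_{2(d-1)}\ge 1$.

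Combining the two halves gives degree exactly $2(d-1)$ with leading coefficient at least $1$. The only delicate point is matching the Emsalem--Iarrobino indexing, that is, the multiplicity $i-d+1=d-1$ for $i=2(d-1)$ in $n-1=3$ projective dimensions. This should be routine.
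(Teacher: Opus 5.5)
Your proposal is correct and is essentially the paper's proof. The upper bound comes from Proposition~\ref{prop:48}, since $R_{4,9,d}$ is a quotient of $R_{4,8,d}$. The lower bound comes from the $(d-1)$-st power of a quadric: the paper takes a quadric $f$ in the inverse system of $R_{4,9,2}$ and uses Cox--Nagata multiplicativity to get $\ell_i^d\circ f^{d-1}=0$, while you take a quadric $Q$ through the nine points with $Q^{d-1}$ vanishing to order $d-1$, which is the same statement under the Emsalem--Iarrobino correspondence. You can drop the Cremona detours you sketch and then discard. They are not needed, and the exponent bookkeeping in the first one does not work as written, since re-applying the transformation to the same four forms just undoes it.
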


\begin{proof}
The upper bound is given by Proposition \ref{prop:48}. For the coefficient, consider $R_{4,9,2}$. The codimension in degree $3$ is equal to one, so there is a quadratic form $f$ in the inverse system. It follows that $f^{d-1}$ is in the inverse system related to $R_{4,9,d}$, which gives the claim.

\end{proof}
See also \cite{DL07} for results on seven points in $\mathbb{P}^3$, and \cite{BDP16} for results on eight and nine points in $\mathbb{P}^3$.

\subsection{The case $n=5,m=8$}

We begin by the following lemma.

\begin{lemma} \label{lemma:increasingd}
Suppose that the coefficient of the series of 
$R/(\ell_1^d,\ldots,\ell_m^d)$ in degree $i<2d$ is the one predicted by Fr\"oberg, and non-zero. Then so is the coefficient of the series of $R/(\ell_1^{d+1},\ldots,\ell_m^{d+1})$ in degree $i.$
\end{lemma}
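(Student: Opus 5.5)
The plan is to convert the hypothesis into an injectivity statement about the multiplication map in degree $i$, move that injectivity from exponent $d$ to exponent $d+1$, and then compare with Fröberg's series term by term. Throughout, write $b_j(d)$ for the coefficient of $t^j$ in $\frac{(1-t^d)^m}{(1-t)^n}$ and $[\,\cdot\,]$ for the truncation in Fröberg's conjecture.

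First I make the hypothesis concrete. Since $i<2d$, only the terms $k=0,1$ of the equigenerated formula from Section~\ref{sec:prel} contribute in degrees $j\le i$, so
\[
b_j(d)=\binom{n-1+j}{n-1}-m\binom{n-1+j-d}{n-1}\quad\text{for } j\le i.
\]
The predicted coefficient in degree $i$ is nonzero. By the definition of the truncation, $b_j(d)>0$ for all $j\le i$, and the predicted value in degree $i$ is $b_i(d)$. Hence the assumption says
\[
\dim_\kk (\ell_1^d,\ldots,\ell_m^d)_i = m\dim_\kk S_{i-d}.
\]
Equivalently, the map $\bigoplus_{j=1}^m S_{i-d}\to S_i$, $(g_j)\mapsto \sum_j g_j\ell_j^d$, is injective: there are no syzygies of degree $i$ among the $\ell_j^d$.

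Next I transfer injectivity to the $(d+1)$'th powers. Suppose $\sum_j g_j\ell_j^{d+1}=0$ with $g_j\in S_{i-d-1}$. Rewrite this as $\sum_j (g_j\ell_j)\ell_j^{d}=0$, where $g_j\ell_j\in S_{i-d}$. Injectivity in degree $i$ gives $g_j\ell_j=0$, hence $g_j=0$ since $S$ is a domain. The same argument works in every degree $j\le i$, because a syzygy in degree $j$ multiplied by a general linear form gives a syzygy in degree $i$. Therefore
\[
\dim_\kk [S/(\ell_1^{d+1},\ldots,\ell_m^{d+1})]_j=\binom{n-1+j}{n-1}-m\binom{n-1+j-d-1}{n-1}\quad\text{for all } j\le i,
\]
and since $i<2d<2(d+1)$, the right-hand side is exactly $b_j(d+1)$.

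Finally I check that the truncation does not cut the series for $d+1$ before degree $i$. This is the only real obstacle, since the actual Hilbert function need not be monotone. I handle it by comparing with the case $d$ coefficientwise: $\binom{n-1+j-d-1}{n-1}\le\binom{n-1+j-d}{n-1}$, so $b_j(d+1)\ge b_j(d)>0$ for all $j\le i$. Hence the first non-positive term of the series for $d+1$ occurs after degree $i$. The predicted coefficient in degree $i$ is therefore $b_i(d+1)>0$, and it equals the actual coefficient, which proves the lemma.
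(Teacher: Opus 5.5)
Your proof is correct and follows the paper's argument. The key step is the same: you rewrite a relation $\sum_j g_j\ell_j^{d+1}=0$ as $\sum_j (g_j\ell_j)\ell_j^{d}=0$ and use that the $d$'th powers have no relations in degree $i<2d$, where every relation is non-Koszul. You also make explicit the bookkeeping the paper leaves implicit, namely that $b_j(d+1)\ge b_j(d)>0$ for $j\le i$, so the truncation of Fr\"oberg's series for $d+1$ does not cut before degree $i$.
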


\begin{proof}
Suppose that there is a non-trivial relation
\[F_1 \ell_1^{d+1} + \cdots + F_m \ell_m^{d+1}=0 \]
with $\deg(F_i)<d-1$. Then
\[(F_1 \ell_1)\ell_1^d + \cdots + (F_m \ell_m) \ell_m^d=0, \]
and since $\deg(F_i \ell_i) < d$ for $i=1,\ldots,m$, this relation is also non-trivial. 

Thus, if there are no non-trivial relations among $\ell_1^d,\ldots,\ell_m^d$ in degree $i$, then there are no non-trivial relations among $\ell_1^{d+1},\ldots,\ell_m^{d+1}$ in degree $i$, and the claim follows.
\end{proof}

\begin{proposition}
Let $d \geq 1$ and write $80(d-1)= 31 d_0 + d_1$ with 
$0 \leq d_1 \leq 30.$ Then $d_0$ is the degree of the Hilbert series of $R_{5,8,d}$ and when $d_0 \ge 49 d_1$, which holds for all $d \geq 541$, the leading coefficient equals 
\[c=\binom{d_1+4}{4},\] 
while if $d_0 < 49$, the leading coefficient equals the coefficient of 
$t^{d_1}$ in 
\[\frac{(1-t^{1+31(d-1) - 12d_0})^8}{(1-t)^5},\]
except for the cases given by the following table.

\begin{table}[ht]
\centering
\[
\begin{array}{|c|c||c|c|}
\hline
d & c & d & c \\
\hline
2  & 7           & 13 & [456,1016] \\
4  & 50          & 14 & [405,413] \\
5  & [21,29]     & 16 & [810,850] \\
6  & 168         & 18 & [1457,1557] \\
7  & [56,96]     & 23 & [1855,1863] \\
9  & [126,246]   & 25 & [3140,3180] \\
11 & [251,531]   &    & \\
\hline
\end{array}
\]
\end{table}

\end{proposition}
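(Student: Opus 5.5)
The plan follows the proofs of Propositions \ref{prop:37}, \ref{prop:38} and \ref{prop:47}. I would start from the inverse system $[5,d^8;D]$ and set $k=5(d-1)-2D$. Then I would apply a chain of first Cremona transformations (Theorem \ref{thm:cremonafirst}). At each step five of the eight forms are moved to coordinate position, and the other three exponents are shifted by the current $k^{(i)}$. The chain is to be chosen so that it closes up again to a uniform system.

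For the bookkeeping I would aim for the endpoint $[5,(d+6k)^8;D+16k]$. The exponents $16$ and $6$ are forced. First, $D+16k<0$ is equivalent to $31D>80(d-1)$, that is, to $d_0<D$. Second, at $D=d_0$ we get $k=(d_1-d_0)/16$ and $d_0+16k=d_1$. Requiring the final exponent to equal $1+31(d-1)-12d_0=1+(d_0+31d_1)/80$ then gives $d+6k$ with coefficient $6$. Concretely, I would write down the table of intermediate systems $[5,\cdots;D+\alpha_ik]$ with $k^{(i)}$ computed as $\sum(\text{five chosen exponents})-5-2(D+\alpha_ik)$, exactly as in the $n=3,4$ cases. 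At each step I would choose the five forms so that every $k^{(i)}$ is a positive multiple of $k$ and the $\alpha_i$ add up to $16$.

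From the vanishing of $[5,(d+6k)^8;D+16k]$ for $D>d_0$, I would conclude that the degree of the series of $R_{5,8,d}$ is at most $d_0$. In degree $d_0$ the dimension equals that of $R_{5,8,d+6k}$ in degree $d_1$. If $d+6k>d_1$, that is, if $d_0\ge 49d_1$, this is $\binom{d_1+4}{4}>0$. This settles both the degree and the leading coefficient in that range. A direct check shows that $d_0\ge 49d_1$ for all $d\ge 541$.

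For the finitely many remaining $d$, the new exponent $e=d+6k\le d_1<31$ is small. I would then compute, with Macaulay2, the series of the ideal generated by the $e$-th powers of eight explicit forms with prime coefficients, up to degree $d_1$. Lemma \ref{lemma:increasingd} can be used to propagate agreement from smaller to larger exponents and save computations. When the computed series agrees with Fröberg's prediction through degree $d_1$ and is nonzero there, semicontinuity together with Fröberg's bound as a lower bound gives the stated coefficient of $t^{d_1}$ in $(1-t^{e})^8/(1-t)^5$. Positivity in degree $d_1$ also confirms that the degree is exactly $d_0$.

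The main obstacle is the exceptional values of $d$ in the table. There the specialization does not attain the Fröberg prediction, and the computation is too large to settle the value. For these I would only give intervals. The lower end comes from the Fröberg lower bound, or from positivity, possibly transported through the Cremona chain. The upper end comes from the dimension computed for the specialization, since by semicontinuity general forms give at most that value. The remaining delicate point is to verify that the Cremona chain is legitimate at every step, namely that $d_j+k^{(i)}>0$ for all $j$ and all $i$ when $D=d_0$.
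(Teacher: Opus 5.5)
Your plan is the paper's proof: the same eight-step chain of first Cremona transformations ending at $[5,(d+6k)^8;D+16k]$, the criterion $d+6k>d_1\iff d_0\ge 49d_1$, and, for the remaining $d$, Macaulay2 checks on a specialization. These checks are propagated from exponent $e$ to $e+1$ in the fixed degree $d_1$ by Lemma~\ref{lemma:increasingd} (which needs $d_1<2e$), and for the table Fr\"oberg's expected value serves as lower bound and the specialization as upper bound.

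What you still have to supply is the chain itself, which the paper obtains by always moving the five forms whose exponents carry the largest multiple of $k$, giving $k^{(1)},\dots,k^{(7)}=2k,3k,3k,3k,2k,k,k$. Your ``delicate'' legitimacy check is then automatic: at $D=d_0$ every exponent is $d+jk$ with $0\le j\le 6$, hence at least $\min(d,d+6k)\ge 1$, and every degree lies between $d_0$ and $d_1$. You should also note that for $d\in\{2,4,6\}$, where $k=1$, the table gives exact values rather than intervals, obtained by checking directly that $R_{5,8,d}$ itself has Fr\"oberg's series.
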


\begin{proof}
Let $D$ be a positive integer. Cremona transformations gives 
\[
\begin{aligned}
& [5,d^8;D] && k = 5(d-1)-2D \\
& [5,(d+k)^3d^5; D+k], && k^{(1)} = k + 3k - 2k = 2k \\
& [5,(d+2k)^3(d+k)^3d^2; D+3k], && k^{(2)} = k + 8k - 6k = 3k \\
& [5,(d+4k)^1(d+3k)^2 (d+2k)^2 (d+k)^2; D+6k], && k^{(3)} = k + 14k - 12k = 3k \\
& [5,(d+5k)^1(d+4k)^3 (d+3k)^2 (d+2k)^1; D+9k], && k^{(4)} = k + 20k - 18k = 3k \\
& [5,(d+6k)^1(d+5k)^3 (d+4k)^3 (d+3k)^1; D+12k], && k^{(5)} = k + 25k - 24k = 2k \\
& [5,(d+6k)^3(d+5k)^4 (d+4k)^1; D+14k], && k^{(6)} = k + 28k - 28k = k \\
& [5,(d+6k)^5(d+5k)^3; D+15k] && k^{(7)} = k + 30k - 30k = k \\
& [5,(d+6k)^8; D+16k]
\end{aligned}
\]
We have that 
\[ D+16k<0 \Longleftrightarrow 
 D + 16 \cdot (5(d-1)-2 D) < 0 \Longleftrightarrow
\left \lfloor \frac{80(d-1)}{31}  \right \rfloor
< D\Longleftrightarrow d_0 < D,\] 
which proves that $d_0$ is the degree of the Hilbert series of $R_{5,8,d}$.
For the leading coefficient, we need consider the case when $D=d_0$ and $k = 5(d-1)-2D = (d_1-d_0)/16$ and we need to compute the dimension of the linear system 
\[[5,(d+6k)^8;d_0+16k] = [5,(d+6k)^8;d_1].
\]
For this we note that 
\[
d+6k = 1 + \frac{31d_0+d_1}{80}+ \frac{6(d_1-d_0)}{16} = 1+\frac{d_0+31d_1}{80},\] 
so that  
\[
d+6k>d_1 \Longleftrightarrow 
\frac{d_0+31d_1}{80} \ge d_1
\Longleftrightarrow 
 d_0\ge  49 d_1,
\]
and we  conclude that the leading coefficient is given by 
\[
c = \binom{d_1+4}{4}, \quad \text{when $d_0\ge 49d_1$}.
\]
We can check that the inequality $d_0\ge 49d_1$ holds for all $d \geq 541$. 

Suppose now that $d_0 < 49 d_1$. The Cremona reduced degree $d_1$ is the remainder of $80(d-1)$ when divided by $31$, while the quotient, $d_0$, is increasing with $d$. It will now be convenient to work with the representation $d-1 = 31a + b$ for $0 \leq b \leq 30$, and we get 
 \[d+6k=31a+b+1 + 6 \cdot \left(5 \cdot (31a+b)-2\left \lfloor \frac{80(31a+b)}{31}  \right \rfloor \right)= a + 1 + 31b  - 12\left \lfloor \frac{80b}{31}  \right \rfloor.\]
For $b\notin\{12,24\}$ we have that this expression equals $d+6k = a+1+(7b)\%12$, where $m\%k$ denotes the remainder of $m$ when divided by $k$, while for $b\in \{12,24\}$, we have to add $12$. For all $b$, we have that $d_0+16k = d_1 = (18b)\%31$.

In the cases $b \in B_0= \{0,2,7,9,14,16,19,21,23,26,28,30\}$, we check for $a=0$ with Macaulay2 that the coefficients are the claimed ones. Moreover, it holds that $d_0+16k < 2 (d+6k)$ in these cases. Thus Lemma \ref{lemma:increasingd} applies, and the statement follows for any $a$.
        
We continue to check the cases $b \in \{0,\ldots,30\} \setminus B_0$ 
for $a=1$. In all these cases we check the coefficient with Macaulay2, and draw conclusions for any $a \geq 1$ for $b \in B_1=\{4,6,11,13,18,20,25,27\}$ using Lemma \ref{lemma:increasingd}. 

For $b \in B \setminus (B_0 \cup B_1)$ we consider $a=2$, check that the coefficient is the claimed one, and now  Lemma \ref{lemma:increasingd} applies in the cases $b \in B_2 = \{1,3,8,10,15,17,22,24,29\}$.

For $b \in B \setminus (B_0 \cup B_1 \cup B_2) = \{5,12 \}$ we consider $a=3$, check that the coefficient is the claimed on, and use Lemma \ref{lemma:increasingd}.

Thus the cases that remains to examine are $B \setminus B_0$ for $a=0$, that is, 
\[d\in \{ 2,4,5,6,7,9,11,12,13,14,16,18,19,21,23,25,26,28,30\}.\] For the cases $d \in 
\{12,19,21,26,28,30\}
$, we check that the coefficient is the expected one. For the cases $d \in \{2,4,6\}$ we can check that $R_{5,8,d}$ has the series predicted by Fr\"oberg, but that the corresponding Cremona transform, which increases the degree in these cases, does not give a coefficient which agrees with Fr\"oberg in this degree.
For the remaining cases, we do a Cremona transformation (which is the identity for the cases $d \in \{5,7,9,11,13\}$), and make a computation for a specialization 
which gives an upper bound, and verify Fr\"oberg's conjecture in each such degree, which gives the corresponding lower bound.
\end{proof}

We remark that it is possible to determine the coefficients in full by using the generators of the Cox-Nagata ring for $n=5$ and $8$ general linear forms, but it would require explicit descriptions of them.

\subsection{The case $n=6, m = 9$}

\begin{proposition} \label{prop:69}
The degree of the Hilbert series of $R_{6,9,d}$ is equal to $3(d-1)$ and the leading coefficient is bounded from below by $d$.
\end{proposition}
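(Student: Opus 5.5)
The plan is to prove the upper bound $D_{6,9,d}\le 3(d-1)$ by Cremona transformations, as for $R_{3,6,d}$, $R_{3,7,d}$ and $R_{4,7,d}$, and the lower bound on the leading coefficient by a Cox--Nagata-type product argument, as in the proposition on $R_{4,9,d}$.

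\emph{Upper bound.} Consider $[6,d^9;D]$ with $D=3(d-1)+j$, $j\ge 1$, so that $k=6(d-1)-2D=-2j<0$. First I apply Theorem~\ref{thm:cremonafirst} to six of the nine forms. This gives
\[
[6,d^6(d-2j)^3;\,3(d-1)-j].
\]
Next I choose six forms for the next transformation: the three of exponent $d-2j$ and three of exponent $d$. The new shift is
\[
k'=3(d-2j)+3d-6-2(3d-3-j)=-4j,
\]
so the exponents keep decreasing while the degree drops by $4j$. I then iterate, always transforming with the six smallest exponents, in the style of the tables in Propositions~\ref{prop:36}--\ref{prop:38}. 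I expect that after finitely many steps one of two things happens: the degree becomes negative, or some remaining exponent $d_i+k$ becomes $\le 0$. In either case Lemma~\ref{lemma:cremona1zero} and the first part of Theorem~\ref{thm:cremonafirst} give dimension $0$. Hence $[R_{6,9,d}]_D=0$ for all $D>3(d-1)$.

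The main obstacle is the bookkeeping in this iteration. One has to track the multiset of exponents and show that the process terminates in the zero case for every $j\ge1$ and every $d$, rather than landing in a nonzero residual system. This is analogous to the $R_{4,9,d}$ case, where the bound was instead borrowed from $R_{4,8,d}$. As a fallback I would use the inequality $\dim [R_{6,9,d}]_D\le \dim [R_{6,8,d}]_D$ and run the (shorter) Cremona chain for eight forms. If that chain does not vanish, I would add the ninth form via a specialization argument.

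\emph{Lower bound and nonvanishing in degree $3(d-1)$.} For $d=2$, the degree-$3$ part of $(\ell_1^2,\dots,\ell_9^2)$ is spanned by at most $9\cdot 6=54$ products. Hence $\dim[R_{6,9,2}]_3\ge \binom{8}{3}-54=2$. So there are two linearly independent cubics $F,G$ in the inverse system, with $\ell_i^2\circ F=\ell_i^2\circ G=0$ for all $i$.

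By the multiplicativity of the Cox--Nagata ring $C$, every product $F^aG^{d-1-a}$ with $0\le a\le d-1$ is annihilated by $\ell_i^{(d-1)+1}=\ell_i^d$. These are $d$ forms of degree $3(d-1)$.

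They are linearly independent. Indeed, a nontrivial relation $\sum c_aF^aG^{d-1-a}=0$ is a binary form in $(F,G)$, and it factors over $\bar\kk$ into linear factors $\alpha F+\beta G$. One of these factors would then have to vanish, contradicting the linear independence of $F$ and $G$.

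By \eqref{eq:inverse}, this gives $\dim[R_{6,9,d}]_{3(d-1)}\ge d>0$. Combined with the upper bound, the degree equals $3(d-1)$ and the leading coefficient is at least $d$.
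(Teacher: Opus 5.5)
Your lower-bound argument is correct, and slightly better than the paper's. The count $\dim[R_{6,9,2}]_3\ge\binom{8}{3}-54=2$ replaces the paper's computer check at $d=2$. Your factorization argument also justifies the linear independence of the $d$ products $F^aG^{d-1-a}$, which the paper only asserts.

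The gap is in the upper bound. You carry out two Cremona steps correctly (for $j=1$ they coincide with the paper's first two steps). After that you only say you \emph{expect} that iterating with the six smallest exponents ends in the zero case. That termination is the whole content of the upper bound. Nothing you wrote rules out the shift becoming non-negative at some stage, after which the degree stops decreasing and the iteration need not reach a vanishing system.

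Your fallback cannot close this. By Theorem~\ref{thm:n+2equi}, $D_{6,8,d}=\lfloor 24(d-1)/7\rfloor>3(d-1)$ for $d\ge 4$, so the eight-form chain does not vanish in degree $3(d-1)+1$. Unlike $R_{4,8,d}$ for $R_{4,9,d}$, it gives nothing. Adding the ninth form by specialization would need a separate computation for each $d$, so it cannot cover all $d$.

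To close the gap, first note that $j=1$ suffices, since $[R_{6,9,d}]_{3(d-1)+1}=0$ forces vanishing in all higher degrees. Your two steps give $[6,(d-4)^3(d-2)^3d^3;3d-8]$. This is already zero for $d\le 4$, because the exponent $d-2$ or $d-4$ is non-positive.

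The paper's trick is to go down one more degree, to $3d-9$; vanishing there implies vanishing in degree $3d-8$. Transforming with the six smallest exponents now has shift exactly $-6$. A one-line induction then shows that after $i$ steps you have $[6,(d-2i-4)^3(d-2i-2)^3(d-2i)^3;\,3(d-1)-6(i+1)]$, again with shift $-6$. So the exponents eventually become non-positive, and Theorem~\ref{thm:cremonafirst} gives zero.

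If you prefer to stay in your exact degree, you must prove the shift stays negative. Write the three exponent groups as $A_t<B_t<C_t$, with degree $D_t$ and shift $k_t=3A_t+3B_t-6-2D_t$. Then $k_{t+1}=2k_t+3(C_t-B_t)$. An induction from $(A_0,B_0,C_0,k_0)=(d-4,d-2,d,-8)$ shows that $C_t+k_t<A_t$ (so the ordering is preserved) and $k_t\le -8$ for all $t$. Some argument of this kind has to be supplied.
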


\begin{proof}
We first show that the inverse system 
\[ [6,d^9;3(d-1)+1], k=-2\]
is the zero set.
A Cremona transformation gives
\[ [6,(d-2)^3d^6;3(d-1)-1], k' = -4.\]
When $d \leq 2$, we get non-positive exponents, so in these cases, the inverse system is the zero set.  
Another Cremona transformation gives 
\[ [6,(d-4)^3 (d-2)^3 d^3;3(d-1)-5]. \] 
If $d =3,4$ we again get non-positive exponents, so we can assume that $d \geq 5$. It is enough to show that the inverse system in one degree below is the zero set, so we consider \[ [6,(d-4)^3 (d-2)^3 d^3;3(d-1)-6],  k^{(1)}= -6 \]. 
\subsubsection*{Claim:} After $i$ Cremona transformations starting from \[ [6,(d-4)^3 (d-2)^3 d^3;3(d-1)-6],\]  we get 
\[ [6,(d-2(i+2))^3 (d-2(i+1))^3 (d-2i)^3;3(d-1)-6 \cdot(i+1)]\] with $k^{(i)}= -6.$

We prove the claim by induction on $i$, with the base case already being settled. For the induction step we get 
\[D^{(i+1)} = 3(d-1) - 6 (i + 1) - k^{(i)} = 3(d-1) - 6 (i + 2),\]
and
\[k^{(i+1)} = 3(d-2(i+2)) + 3(d-2(i+1)) - 6 - 2\cdot (3(d-1) - 6 (i + 2) )= -6,\]
which proves the claim. It follows that 
the exponents eventually will become non-positive, which gives that the degree of the Hilbert series of $R_{6,9,d}$ is at most $3(d-1)$.

We now turn to the dimension in degree $3(d-1)$. The claim is trivial for $d=1$. For $d=2$ a computation gives that the series agrees with that predicted by Fr\"oberg for generic forms. In particular, there are two linearly independent cubic forms $f,g$ such that $
\ell_i^2 \circ f=\ell_i^2 \circ g=0$ for $i=1,\ldots,9.$ It follows from the structure of the Cox-Nagata ring that 
\[ \ell_i^d \circ f^{d-j} g^{j-1} = 0 \text{ for } i = 1,\ldots,9 \text{ and } j=1,\ldots,d.\] Since $f$ and $g$ are linearly independent, so is the set $\{f^{d-j} g^{j-1} \}$, which finishes the proof.
\end{proof}

We believe that the bound on the dimension given in Proposition \ref{prop:69} is sharp, but have only checked it computationally for $d \leq \compBoundsixnine$. 

We have now determined $D_{n,n+3,d}$ for $n=3,4,5,6$. For the case $n \leq 2$, when the Hilbert series of $R_{n,n+3,d}$ agrees with the one predicted by Fr\"oberg, it is straightforward to check that 
 $D_{1,m,d} = d-1$, and $D_{2,m,d} = \left \lfloor \frac{(m+2)(d-1)}{m+1} \right \rfloor.$ We summarize these observations in Table \ref{table:degree}, where we for reference also include the corresponding values for the well known cases $m=n$ and $m=n+1$.

\begin{table}[ht]
\centering
\begin{tabular}{|c||c|c|c|c|c|c|}
\hline
$m\backslash n$ & $1$ & $2$ & $3$ & $4$ & $5$ & $6$ \\
\hline \hline
$n$ 
& $d-1$
& $2(d-1)$
& $3(d-1)$
& $4(d-1)$
& $5(d-1)$
& $6(d-1)$ \\
\hline 
$n+1$
& $d-1$
& $\left\lfloor \frac{3(d-1)}{2} \right\rfloor$
& $2(d-1)$
& $\left\lfloor \frac{5(d-1)}{2} \right\rfloor$
& $3(d-1)$
& $\left\lfloor \frac{7(d-1)}{2} \right\rfloor$ \\
\hline
$n+2$
& $d-1$
& $\left\lfloor \frac{4(d-1)}{3} \right\rfloor$
& $2(d-1)$
& $\left\lfloor \frac{12(d-1)}{5} \right\rfloor$
& $3(d-1)$
& $\left\lfloor \frac{24(d-1)}{7} \right\rfloor$ \\
\hline
$n+3$
& $d-1$
& $\left\lfloor \frac{5(d-1)}{4} \right\rfloor$
& $\left\lfloor \frac{12(d-1)}{7} \right\rfloor$
& $\left\lfloor \frac{28(d-1)}{13} \right\rfloor$
& $\left\lfloor \frac{80(d-1)}{31} \right\rfloor$
& $3(d-1)$ \\
\hline
$n+4$
& $d-1$
& $\left\lfloor \frac{6(d-1)}{5} \right\rfloor$
& $\left\lfloor \frac{21(d-1)}{13} \right\rfloor$
& $2(d-1)$
& 
&  \\
\hline
$n+5$
& $d-1$
& $\left\lfloor \frac{7(d-1)}{6} \right\rfloor$
& $\left\lfloor \frac{48(d-1)}{31} \right\rfloor$
& $2(d-1)$
& 
&  \\
\hline
$n+6$
& $d-1$
& $\left\lfloor \frac{8(d-1)}{7} \right\rfloor$
& $\left\lfloor \frac{3(d-1)}{2} \right\rfloor$
& 
& 
&  \\
\hline
\end{tabular}
\caption{The degree of the Hilbert series for $R_{n,m,d}$ for some values of $n$ and $m$. In the cases $n \leq 2, m \leq n+1,$ and $(n,m)  \in \{(3,9),(4,8)\}$ the series corresponds to that given by Fr\"oberg for general forms. 
} \label{table:degree}
\end{table}
It is natural to ask how these sequences of numbers continues, and in the next section we will give one observation in this direction. 
\subsection{The case $n=k^2-3, m=k^2$}

\begin{proposition} 
Let $k \geq 2$. We have 
\[D_{k^2-3,k^2,d} \geq  \binom{k}{2}\cdot(d-1).\]
\end{proposition}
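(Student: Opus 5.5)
The plan is to imitate the lower-bound arguments of Proposition~\ref{prop:69} and of the case $R_{4,9,d}$. We first find a nonzero form in the inverse system for $d=2$ in degree $\binom{k}{2}$, and then take powers of it, using the multiplicative structure of the Cox--Nagata ring. Write $n=k^2-3$ and $m=k^2$. For $d=2$ the inverse system $[n,2^{m};D]$ consists of forms $F$ of degree $D$ with $\ell_i^2\circ F=0$. Equivalently, $F$ has degree at most one in each direction $\ell_i$, that is, in the Emsalem--Iarrobino language, $F$ vanishes to order $D-1$ at $m$ general points of $\mathbb{P}^{n-1}$.

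The first step is to show that $(R_{k^2-3,k^2,2})_{D}\neq 0$ for $D=\binom{k}{2}$. Once this holds, there is a nonzero $f$ of degree $\binom{k}{2}$ with $\ell_i^2\circ f=0$ for all $i$. The ring structure recalled in the subsection on the Cox--Nagata ring, $V_{c_1,\dots,c_m,D}V_{c'_1,\dots,c'_m,D'}\subseteq V_{c_1+c'_1,\dots,D+D'}$, then gives $\ell_i^{d}\circ f^{d-1}=0$. Since $f^{d-1}\neq 0$ in characteristic zero, it has degree $\binom{k}{2}(d-1)$, and we conclude $D_{k^2-3,k^2,d}\ge \binom{k}{2}(d-1)$. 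This step is routine; it is exactly the argument used for $R_{4,9,d}$ and $R_{6,9,d}$. As a consistency check, $k=2$ gives $R_{1,4,d}$ with bound $d-1$, and $k=3$ gives $R_{6,9,d}$ with bound $3(d-1)$, which agrees with Proposition~\ref{prop:69}.

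The main obstacle is producing the degree $\binom{k}{2}$ form for $d=2$ and arbitrary $k$. We try a dimension count first. By the Fr\"oberg expectation, the coefficient of $t^{D}$ in $(1+t)^{n}(1-t^2)^{k^2-n}=(1+t)^{k^2-3}(1-t^2)^3$ is a lower bound for the actual Hilbert function: the actual dimension is always at least the expected truncated value as long as that value is positive, because conditions can only be dependent. So it suffices to check that this coefficient, namely
\[\binom{k^2-3}{D}-3\binom{k^2-3}{D-2}+3\binom{k^2-3}{D-4}-\binom{k^2-3}{D-6},\]
together with all coefficients of lower degree, is positive for $D=\binom{k}{2}$. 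Since $\binom{k}{2}\approx k^2/2$ lies near the middle of the range $0,\dots,k^2-3$, positivity is not automatic. We would compare $D$ with the first sign change of the third difference of $\binom{N}{j}$, with $N=k^2-3$. That sign change occurs near $N/2-\xi\sqrt{N}/2$, where $\xi=\sqrt 3$ is the positive root of $H_3$, i.e.\ near $k^2/2-\tfrac{\sqrt3}{2}k$. This is to the right of $\binom{k}{2}=k^2/2-k/2$ only if $\sqrt3>1$, which holds. So the approach should work for large $k$, with small $k$ checked directly, and the delicate part is a clean finite inequality.

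If the estimate above turns out messy, we would use a fallback construction. Choose the $k^2$ points in $\mathbb{P}^{k^2-4}$ so that they can be read as the grid points of a $k\times k$ array, and take $f$ to be a product of $\binom{k}{2}$ linear or determinantal factors, one for each pair of rows. Each factor is annihilated by $\ell_i$ for all $\ell_i$ outside the pair of rows it is attached to, so that each $\ell_i$ hits at most one factor. This would be a genuine form with $\ell_i^2\circ f=0$, in the spirit of the quadratic form used for $R_{4,9,2}$. We would then argue by semicontinuity that the general configuration also has a nonzero element in this degree.
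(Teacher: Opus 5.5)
Your second step is exactly the paper's: from a nonzero $f$ of degree $\binom{k}{2}$ with $\ell_i^2\circ f=0$, the Cox--Nagata multiplicativity gives $\ell_i^d\circ f^{d-1}=0$. The gap is in the first step, which carries all the content, and your main route cannot establish it because the final comparison in your dimension count is reversed. For the coefficients of $(1+t)^{k^2-3}(1-t^2)^3=(1+t)^{k^2}(1-t)^3$, use the standardized variable $x=(j-k^2/2)/(k/2)$. The first sign change is at $x=-\sqrt3$, i.e.\ $j\approx k^2/2-\frac{\sqrt3}{2}k$. But $\binom{k}{2}=k^2/2-k/2$ sits at $x=-1$, to the \emph{right} of that point, where $\phi'''(-1)=-2\phi(-1)<0$. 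Concretely, for $k=4$ the coefficient in degree $6$ is $\binom{13}{6}-3\binom{13}{4}+3\binom{13}{2}-1=-196$, while in degree $5$ it is $468$; for $k=5$ it is negative as well. So for $k\ge 4$ the proposition asserts that $R_{k^2-3,k^2,2}$ is nonzero \emph{beyond} the Fr\"oberg-predicted degree (compare the proof of Theorem~\ref{thm:Fdeg} with $\xi^{(3)}_1=-\sqrt3$), and no expected-dimension count can prove that. Only $k\le 3$ is reachable this way; for $k=3$ the naive count gives $56-9\cdot 6=2$.

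The fallback does not repair this. A form $\ell_i$ in row $r$ lies in $k-1$ pairs of rows, so it hits $k-1$ factors, not one. In fact the mechanism cannot work with linear factors at all. A nonzero linear form is apolar to at most $n-1=k^2-4$ of the $k^2$ general $\ell_i$, since any $n$ of them are independent, so each factor is non-apolar to at least four $\ell_i$. If these sets are pairwise disjoint, there are at most $k^2/4<\binom{k}{2}$ factors when $k\ge 3$. Semicontinuity also runs the wrong way: specializing the points can only enlarge the inverse system, so a nonzero element for a special grid configuration says nothing about general points. This is why the paper uses specializations only for upper bounds.

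The missing idea is an explicit form valid for an arbitrary configuration. The paper normalizes $\ell_i=x_i$ for $i\le n$, $\ell_{n+1}=\sum x_i$, $\ell_{n+2}=\sum a_ix_i$, $\ell_{n+3}=\sum b_ix_i$, which is no loss of generality. It takes $f=|M|$, where row $i$ of $M$ is $(x_im_1,\dots,x_im_s,m'_1,\dots,m'_t)$ evaluated at $(a_i,b_i)$. Here $m_1,\dots,m_s$ are the $s=\binom{k}{2}$ monomials of degree at most $k-2$ in $a,b$, and $m'_1,\dots,m'_t$ are the $t=\binom{k+1}{2}-3$ monomials of degree at most $k-1$ other than $1,a^{k-1},b^{k-1}$. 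Then $s+t=k^2-3$, and $|M|$ is squarefree of degree $\binom{k}{2}$ in the $x_i$, so $x_i^2\circ|M|=0$. Coincidences of columns then give $\ell_{n+j}^2\circ|M|=0$ for $j=1,2,3$.
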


\begin{proof}

In the cases $k=2,3$, we have an equality by our earlier results. For the general case, consider the quadratic situation $d=2.$ Let $n = k^2-3.$

After a change of coordinates, we can assume that 
$\ell_i = x_i$ for $i = 1,\ldots,n, 
\ell_{n+1} = x_1+\cdots+x_n, \ell_{n+2} = a_1x_1+\cdots+a_nx_n, 
\ell_{n+3} = b_1x_1+\cdots+b_nx_n.$
Let $M$ be the matrix whose $i$'th row is
\[ (x_i m_1,\ldots, x_i m_s, m'_1, \ldots, m'_t), \]
where $m_1,\ldots, m_s$ are the monomials in $a_i$ and $b_i$ up to degree $k-2$, and where $m'_1,\ldots, m'_t$ are the monomials in $a_i$ and $b_i$ up to degree $k-1$, except for $1,a_i^{k-1},b_i^{k-1}$.
Thus $s=\binom{k}{2}$ and $t=\binom{k+1}{2}-3$, so 
$s+t=k^2-3$, so $M$ is a quadratic matrix. We have 
$x_i^2 \circ |M| = 0,$ since $M$ is squarefree, and 
\[\ell_{n+1}^2 \circ |M| = \ell_{n+2}^2 \circ |M| = \ell_{n+3}^2 \circ |M| =0\]
since when we replace $x_i$ by $1/a_i/b_i$ in two places, we get two equal columns. Thus \[D_{k^2-3,k^2,2} \geq  \binom{k}{2}.\]
It now follows by the pigeonhole principle that 
\[ \ell_i^d \circ |M|^{d-1} = 0 \text { for } i = 1,\ldots, n+3. \]

\end{proof}

\section{The conjecture by Harbourne, Schenck, and Seceleanu} 
We now turn to the following conjecture by Harbourne, Schenck, and Seceleanu \cite{HSS11} where some of the results from the previous section provide counterexamples. 

\begin{conjecture}[\cite{HSS11}] \label{conj:HSS} For $I = 
(\ell_1^t,\dots,\ell_n^t)\subseteq S=\kk[x_1,\dots,x_r]$ with $\ell_i\in S_1$ general and $n\ge r+1\ge 5$, WLP fails for all $t>>0$.
\end{conjecture}

In our language, this means that for $m > n \geq 4$, the algebra $R_{n,m,d}$ fails the WLP for all $d$ sufficiently large and our next proposition shows that this is not the case. 

\begin{proposition}
    \label{prop:wlp}
    The algebras $R_{4,k^2,d}$ and $R_{5,8,d}$ have the WLP for all $k\ge 2$ and all $d$.
\end{proposition}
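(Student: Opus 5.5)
The plan is to turn the WLP for $R_{n,m,d}$ into a comparison of Hilbert functions with $R_{n-1,m,d}$. Then I would use the facts already established: $R_{3,k^2,d}$ has the Fröberg series (Ciliberto–Miranda, cf.\ the discussion of the SHGH conjecture), and $R_{4,8,d}$ has the Fröberg series of degree $2(d-1)$ by Proposition~\ref{prop:48}.

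Let $A=R_{n,m,d}$ and let $\ell$ be general. As in the proof of Theorem~\ref{thm:diffdeg}, $A/(\ell)\cong R_{n-1,m,d}$. From the exact sequences $A_{i-1}\xrightarrow{\ell}A_i\to (A/\ell)_i\to 0$ we get
\[
\dim A_i-\dim A_{i-1}\le \dim (A/\ell)_i .
\]
Equality holds exactly when multiplication by $\ell$ is injective in that degree. Surjectivity in degree $i$ is equivalent to $(A/\ell)_i=0$.

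So it suffices to show that multiplication by $\ell$ is injective in every degree $i\le D:=D_{n-1,m,d}$. Above $D$, surjectivity is automatic. By telescoping the inequality, injectivity up to $D$ is equivalent to
\[
\dim A_i=\sum_{j\le i}\dim (A/\ell)_j \quad \text{for all } i\le D.
\]
The case $k=2$ for $R_{4,k^2,d}$ is a monomial complete intersection, which has the SLP by Stanley. So I assume $m=k^2\ge 9$ for $n=4$, and $m=8$ for $n=5$.

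The key inequality runs in the other direction. I would aim for a lower bound $\dim A_i\ge F_n(i)$, where $F_n(i)=\sum_{j\ge 0}(-1)^j\binom{m}{j}\binom{i-jd+n-1}{n-1}$ is the untruncated Fröberg coefficient. For $i<2d$ only the $j=0,1$ terms survive. Then $F_n(i)=\binom{i+n-1}{n-1}-m\binom{i-d+n-1}{n-1}$, and this is trivially a lower bound, since the ideal in degree $i$ is spanned by at most $m\binom{i-d+n-1}{n-1}$ forms.

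On the other side, $R_{n-1,m,d}$ has the Fröberg series, so $\dim(A/\ell)_j=F_{n-1}(j)>0$ for $j\le D$. Moreover $F_{n-1}(j)=F_n(j)-F_n(j-1)$ holds for these $j$, since $(1-t)\cdot(1-t^d)^m/(1-t)^n=(1-t^d)^m/(1-t)^{n-1}$ and no truncation occurs before $D$. Telescoping gives $\sum_{j\le i}\dim(A/\ell)_j=F_n(i)$. Combining the two bounds,
\[
F_n(i)\le \dim A_i\le \sum_{j\le i}\dim (A/\ell)_j=F_n(i),
\]
so equality holds throughout and $\ell$ is injective in all degrees $\le D$. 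This proves the WLP.

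The step I expect to be the main obstacle is checking that the whole range $i\le D$ lies below $2d$, where the simple count is valid. For $n=5$, $m=8$ we have $D=D_{4,8,d}=2(d-1)<2d$ by Proposition~\ref{prop:48}. For $n=4$, $m=k^2\ge 9$, the Fröberg degree of $R_{3,k^2,d}$ is at most $D_{3,9,d}\le \lfloor 3(d-1)/2\rfloor<2d$ by Proposition~\ref{prop:39}; more forms can only lower the degree. I also have to check that positivity of the Fröberg coefficients for $n-1$ up to $D$ forces $F_n$ to be positive and untruncated in the same range. This should be immediate, since $F_n(i)=\sum_{j\le i}F_{n-1}(j)$ with positive summands.
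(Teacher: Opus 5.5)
Your proof is correct and is essentially the paper's argument. The paper proves Lemma~\ref{lemma:wlp} by exactly this telescoping comparison with the hyperplane section $R_{n-1,m,d}$, and then applies it using \cite{CM06} and Proposition~\ref{prop:48}. The only difference is how the lower bound $\dim A_i \ge F_n(i)$ is obtained: the paper uses Fr\"oberg's general (lexicographic) inequality, valid in all degrees, while you use the naive count below degree $2d$. That is why you need the extra check $D_{n-1,m,d}<2d$, which you verify correctly.
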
 

We will need the following lemma to prove this proposition. 
See \cite[Proposition 2.1 and Corollary 2.3]{MMN12} for similar results.

\begin{lemma} \label{lemma:wlp}
Let $\ell_1, \ldots, \ell_m$ be general linear forms in $\kk[x_1,\ldots,x_{n+1}],$ and let $\bar{\ell}_1, \ldots, \bar{\ell}_m$ be their images in $\kk[x_1,\ldots,x_n]=\kk[x_1,\ldots,x_{n+1}]/(x_{n+1}).$ Suppose that $m\geq n$ and that $A=\kk[x_1,\ldots,x_n]/(\bar{\ell}_1^{d_1}, \ldots, \bar{\ell}_m^{d_m})$ has the series predicted by Fr\"oberg. Then $B=\kk[x_1,\ldots,x_{n+1}]/(\bar{\ell}_1^{d_1}, \ldots, \bar{\ell}_m^{d_m})$ has the WLP, with $x_{n+1}$ as a WL element.
\end{lemma}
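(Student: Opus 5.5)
The plan is to compare the Hilbert series of $B/(x_{n+1})$ with the ideal form $[(1-t)\HS(B,t)]$ that the WLP requires. Throughout, write $T=\kk[x_1,\ldots,x_n]$ and $J=(\bar{\ell}_1^{d_1},\ldots,\bar{\ell}_m^{d_m})\subseteq T$.

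First we identify $B$. Since the generators $\bar{\ell}_i^{d_i}$ lie in $T$, we have $B\cong A\otimes_\kk \kk[x_{n+1}] = A[x_{n+1}]$. This ring is not artinian, so we read the WLP statement as a statement about the Hilbert series. The condition is that multiplication by $x_{n+1}$ has maximal rank in each degree, i.e. $\HS(B/(x_{n+1}),t)=[(1-t)\HS(B,t)]$. Now $\HS(B,t)=\HS(A,t)/(1-t)$, so $(1-t)\HS(B,t)=\HS(A,t)$. Also $B/(x_{n+1})\cong A$. The truncation $[\,\cdot\,]$ changes nothing here, since $\HS(A,t)$ has nonnegative coefficients and no internal zeros. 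So in this reading the equality holds trivially, and multiplication by $x_{n+1}$ is injective on $B$.

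That reading makes the lemma almost vacuous, so the hypothesis that $A$ has the Fröberg series must be doing real work. The natural intended statement is therefore the one used for Proposition \ref{prop:wlp}. There the algebra is $C=\kk[x_1,\ldots,x_{n+1}]/(\ell_1^{d_1},\ldots,\ell_m^{d_m})$, built from general forms in $n+1$ variables, and we want $x_{n+1}$ to be a weak Lefschetz element for $C$. I would prove this version.

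\textbf{Step 1.} The quotient is $C/(x_{n+1})\cong T/(\bar{\ell}_1^{d_1},\ldots,\bar{\ell}_m^{d_m})=A$. We may assume $x_{n+1}$ is general, because the $\ell_i$ are general and we can change coordinates.

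\textbf{Step 2.} The exact sequence
\[0\to (0:x_{n+1})_{i-1}\to C_{i-1}\xrightarrow{x_{n+1}} C_i\to A_i\to 0\]
gives $\dim A_i\ge \dim C_i-\dim C_{i-1}$, with equality exactly when the map is injective in that degree. It also gives $\dim A_i\ge 0$, with equality exactly when the map is surjective. So the map has maximal rank in degree $i$ if and only if $\dim A_i=\max(0,\dim C_i-\dim C_{i-1})$.

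\textbf{Step 3.} Use the Fröberg bound. The Fröberg series is a lower bound, coefficientwise, for any quotient by forms of those degrees. The standard argument is semicontinuity applied to the sequence above, taken degree by degree. It yields
\[\HS(C,t)\ \ge\ \Big[\tfrac{\prod(1-t^{d_i})}{(1-t)^{n+1}}\Big] \quad\text{and}\quad \HS(A,t)\ \ge\ [(1-t)\HS(C,t)].\]
The hypothesis says $\HS(A,t)$ equals the Fröberg series in $n$ variables, $F_n=\big[\prod(1-t^{d_i})/(1-t)^n\big]$. Write $F_{n+1}$ for the corresponding series in $n+1$ variables.

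\textbf{Step 4.} Sandwich the two series. We have $F_n=[(1-t)F_{n+1}]$, because truncation commutes in the appropriate sense. Combining this with Step 3 gives
\[F_n=\HS(A,t)\ \ge\ [(1-t)\HS(C,t)]\ \ge\ [(1-t)F_{n+1}]=F_n.\]
The second inequality uses that $[(1-t)\,\cdot\,]$ is monotone on series dominating $F_{n+1}$. Hence all the terms are equal. Then $\HS(A,t)=[(1-t)\HS(C,t)]$, which is precisely the WLP with $x_{n+1}$ as the Lefschetz element.

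The main obstacle is the monotonicity and truncation bookkeeping in Step 4. We must show that $[(1-t)H]\ge[(1-t)F_{n+1}]$ whenever $H\ge F_{n+1}$, and that $[(1-t)F_{n+1}]=F_n$. I would argue this by induction on the degree, using that $F_{n+1}$ is a partial sum of $F_n$ up to the first degree where $F_n$ vanishes. Beyond that degree, Step 2 forces $A_i=0$ anyway.
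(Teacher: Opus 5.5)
Your reading of the statement is right: the ring should be $C$, built from the $\ell_i$, and the $\bar\ell_i$ in $B$ is a slip. Steps 1 and 2 are fine, but Steps 3 and 4 do not work.

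The coefficientwise bound $\HS(C,t)\ge[\prod(1-t^{d_i})/(1-t)^{n+1}]$ is not available. Fröberg's theorem, which the paper invokes, is only a \emph{lexicographic} inequality. Neither of your suggested justifications gives more. Semicontinuity compares general forms with special ones, not with the expected series. The degree-by-degree estimate $h_{I+(f)}(j)\ge h_I(j)-h_I(j-d)$ for Hilbert functions cannot be iterated coefficientwise, because a lower bound for $h_I(j-d)$ enters with a minus sign.

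Worse, the monotonicity you flag as ``bookkeeping'' in Step 4 is false, since a lower bound on a series says nothing about its first differences. Take $m=5$ quadrics and $n=3$. Then $F_4=1+4t+5t^2$ and $[(1-t)F_4]=1+3t+t^2=F_3$. But $H=1+5t+5t^2\ge F_4$ gives $[(1-t)H]=1+4t$, which is smaller in degree $2$. So your sandwich cannot be closed from below.

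The missing idea is to use the hypothesis on $A$ to bound $\HS(C,t)$ from \emph{above}. Write $a_i=\dim A_i$, $b_i=\dim C_i$, and let $c_i$ be the coefficients of $F_{n+1}$. For $i\le\deg A$ we have $a_i>0$, since $F_n$ has no internal zeros. So Fröberg's Lemma 4, $F_n=[(1-t)F_{n+1}]$, gives $a_i=c_i-c_{i-1}$ in this range. Your Step 2 gives $a_i\ge b_i-b_{i-1}$. Telescoping,
\[
b_i-c_i\le b_{i-1}-c_{i-1}\le\cdots\le b_0-c_0=0\qquad (i\le\deg A).
\]
Now the lexicographic inequality is enough. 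The first degree where $b$ and $c$ differ cannot be some $i\le\deg A$, since the difference there would be negative. Hence $b_i=c_i$ for all $i\le\deg A$, so $a_i=b_i-b_{i-1}$ and multiplication by $x_{n+1}$ is injective in these degrees. For $i>\deg A$ it is surjective because $a_i=0$. This is the paper's proof.
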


\begin{proof}
 We write \[
 \HS(A,t) = \sum_{i\ge 0} a_i t^i, \quad \HS(B,t)=\sum_{i\ge 0} b_it^i\quad\text{and}\quad \left[\frac{\prod (1-t^{d_i})}{(1-t)^{n+1}}\right] = \sum_i c_it^i.\]
We have $A \cong B/(x_{n+1})$ and hence $a_i\le b_i-b_{i-1}$ for all $i\ge 1$. Hence when $a_i=0$ we have that $b_i\ge b_{i-1}$ and the multiplication  $\times x_{n+1}:B_{i-1}\longrightarrow B_i$ is surjective. 

By assumption, we have that $\HS(A,t) = \left[\prod (1-t^{d_i})/(1-t)^{n}\right]$ and by  \cite[Lemma 4]{FR85} we have that 
\[
\HS(A,t) = \left[(1-t)\left[\frac{\prod_i (1-t^{d_i})}{(1-t)^{n+1}}\right]\right] = \sum_{i\ge 0} \max\{c_i-c_{i-1},0\} t^i.
\]
Hence when $a_i>0$, we have that $a_i = c_i-c_{i-1}$ and since $a_i\ge b_i-b_{i-1}$ we get $c_i-c_{i-1}\ge b_i-b_{i-1}$, which gives 
\[
b_i-c_i\le b_{i-1}-c_{i-1} \le \cdots \le b_0-c_0=0
\]
but the first non-zero difference $b_i-c_i$ has to be positive according to the main theorem in \cite{FR85}. We conclude that $b_i=c_i$ and that the multiplication $\times x_{n+1}\colon B_{i-1}\longrightarrow B_{i}$ is injective when $a_i>0$.
\end{proof}

We are now in a position to prove Proposition~\ref{prop:wlp} giving counterexamples to Conjecture~\ref{conj:HSS}.

\begin{proof}[Proof of Proposition~\ref{prop:wlp}]
By \cite{CM06} the algebras $R_{3,k^2,d}$ have the Hilbert series predicted by Fr\"oberg for all $k$ and all $d$. By Proposition \ref{prop:48} the algebras $R_{4,8,d}$ have the Hilbert series predicted by Fr\"oberg for all $d$. Hence, by Lemma~\ref{lemma:wlp}, the algebras $R_{4,k^2,d}$ and $R_{5,8,d}$ all have the WLP.
\end{proof}

\begin{remark}
Although we have proven that the Iarrobino conjecture does not hold for $n$ large enough, it is open for small values of $n$, in particular for $n=3$ and $m\geq 10$, where it coincides with the SHGH conjecture. However, if $R_{3,m,d}$ for $m \geq 10$ has the series for $m$ generic forms of degree $d$ in three variables for all $d$, then, by Lemma \ref{lemma:wlp}, the algebra $R_{4,m,d}$ has the WLP for all $d$, which would give more counterexamples to Conjecture~\ref{conj:HSS}. And the converse also holds -- if Conjecture~\ref{conj:HSS} holdsfor $n=4$ and some $m\geq 10$, this would mean that the SHGH conjecture is false.
\end{remark}

\subsection*{Acknowledgements}
   Computer experiments in Macaulay2 \cite{M2} was the starting point of this paper. Some of the figures were drawn with the help of an AI tool. We thank Alessandro Oneto for drawing our attention to \cite{LPS23}. The first author was partially funded by the Swedish Research Council VR2024-04853, and the second author was funded by the Swedish Research Council VR2022-04009.

\end{document}